\newtheorem{theorem}{Theorem}
\newtheorem{lemma}[theorem]{Lemma}
\newtheorem{proposition}[theorem]{Proposition}
\newtheorem{Conditions}[theorem]{Conditions}
\newcommand{\R}{\mathbb{R}}
\DeclareMathOperator{\spt}{spt}
\begin{document}

\title[Local convexity estimates for fully nonlinear curvature flows]{Local umbilic, convexity and cylindrical estimates for fully nonlinear curvature flows}

\author{Mat Langford}
\email{mathew.langford@anu.edu.au}
\address{Mathematical Sciences Institute\\
Australian National University\\
Canberra\\
ACT 0200\\
Australia}

\author{James McCoy}
\address{School of Information and Physical Sciences\\
University of Newcastle\\
University Drive\\
Callaghan\\
NSW 2308\\
Australia}
\email{James.McCoy@newcastle.edu.au}

\date{\today}

\begin{abstract}
In a recent article \cite{MR4609824}, a localization of the Huisken--Stampacchia iteration method was developed, and used to establish localizations of the well-known ``umbilic'', ``convexity'' and ``cylindrical'' estimates for hypersurfaces evolving in Euclidean space by mean curvature flow \cite{MR0772132,MR1666878,MR1719551,MR2461428}. Here, we adapt the methods of \cite{MR4609824} to treat more general (fully nonlinear) flows, establishing localizations of asymptotically sharp curvature pinching estimates for hypersurfaces evolving by one-homogeneous functions of curvature under very general conditions. 
We also briefly describe how the method can be adapted to treat the deformation of hypersurfaces in curved ambient spaces (by suitable speed functions), which is fundamental for many important applications of such flows. 
\end{abstract}

\keywords{curvature flow, parabolic partial differential equation}
\subjclass{Primary 35K55, 53C44}

\thanks{The research of the first author was supported by DECRA fellowship DE200101834 of the Australian Research Council (ARC).  The research of the second author was supported by ARC Discovery Project DP180100431.}

\maketitle

\setcounter{tocdepth}{1}
\tableofcontents

\section{Introduction} \label{S:intro}
\newtheorem{Fconditions}{Conditions}[section]
\newtheorem{main}[Fconditions]{Theorem}

Extrinsic geometric flows of are diffusive processes which drive interfaces in their normal direction with speed determined pointwise by curvature. Such equations arise naturally in various physical processes \cite{MR0078836,vonNeumann52,Firey} and have led to a number of remarkable applications in geometry and topology \cite{MR1957035,MR3714512,MR1267897,MR2461428}
. The most well-known example of such a process is the (quasilinear) mean curvature flow \cite{MR0772132}
, but many other (fully nonlinear) examples have been studied (see, for example, \cite{MR2670672,MR1385524,MR0826427,MR0862712,MR2729317,MR3070558,MR3338439,MR2198805}), not least because specific applications often lead naturally to, or even necessitate, the consideration of specific speeds \cite{MR1957035}. One well-known example of this phenomenon is the theorem of Andrews \cite{MR1267897}, which demonstrates that a closed hypersurface of a compact Riemannian manifold whose principal curvatures $\kappa_i$ exceed the square root $\sqrt{K}$ of a non-positive lower bound $-K$ for the ambient sectional curvatures deforms smoothly to a round point when evolved by the harmonic mean of the shifted principal curvatures, $\kappa_i-\sqrt{K}$. One nice application of this result is a quantitative ``pinched sphere'' theorem (which cannot be established using the mean curvature flow). 

The key difficulty in the analysis and application of such flows is the formation of finite time singularities. One powerful and robust tool for analysing singularity formation in extrinsic geometric flows is Huisken--Stampacchia iteration, which is the main tool in the proof of various important \emph{a priori} curvature pinching estimates that improve at the onset of singularities (see, for example, \cite{MR0772132, MR1666878, MR1719551,MR4609824,Ng15,LangfordNguyen,LynchNguyen} for the mean curvature flow and \cite{MR0826427, MR0862712, MR3265960, MR3299822, MR3218814, MR3714512, MR4484205} for fully nonlinear flows). In a recent article \cite{MR4609824}, the Huisken--Stampacchia scheme was localized in space in the context of mean curvature flow of hypersurfaces in Euclidean space, leading to local versions of all of the known asymptotic curvature pinching estimates in that setting. Our goal in this paper is to extend this method to more general fully nonlinear speeds and general ambient spaces, both of which are crucial in the context of geometric applications. Our result is the localization, in general ambient spaces, of essentially all known asymptotic curvature pinching estimates for fully nonlinear one-homogeneous flows (i.e. \cite[Theorem 5.1]{MR0826427}, \cite[Theorem 5.1]{MR0862712}, \cite[Theorem 1.2]{MR3299822}, \cite[Theorem 1.1]{MR3218814}, \cite[Theorem 1.3]{MR3265960}, \cite[Theorem 3.1]{MR3714512}, \cite[Theorem 1.1]{MR4129354}, and \cite[Theorem 1.1]{MR4484205}). 

The structure of this article is as follows. In Section \ref{S:prelim} we provide a precise definition of the class of flow speeds in which we are interested, and recall some properties of curvature functions of interest. Here we also set up notation, describe our cutoff functions, and recall a general Poincar\'{e}-type inequality, the Sobolev inequality of Michael and Simon, and Stampacchia's lemma, which are key tools needed for Huisken--Stampaccia iteration. In Section \ref{S:Fconvex}, we localize the convexity \cite{MR3299822} and cylindrical \cite{MR3265960} estimates for flows by convex speeds. The arguments presented here will form the template for later sections. In Section \ref{S:surfaces}
, we localize the convexity and umbilic estimates for the flow of surfaces \cite{MR2729317,MR3299822} 
by any admissible speed. In Sections \ref{S:concave} and \ref{S:Fcic}, we consider flows of hypersurfaces by concave speeds $f:\Gamma^n\to \R$, $\Gamma_+^n\subset \Gamma^n$, establishing local cylindrical estimates when $f|_{\partial\Gamma^n}=0$ and a local umbilic estimate when $f$ is inverse-concave $\Gamma^n=\Gamma_+^n$ \cite{MR4129354}. We also establish a local convexity estimate for flows of hypersurfaces by the special class of concave speeds introduced by Lynch \cite{MR4484205}. 
Finally, in Section \ref{S:Riem}, we show how our localized estimates may be established when the ambient space is not flat.

\section{Preliminaries} \label{S:prelim}

\subsection{Curvature functions and admissible flow speeds} We refer to a smooth function $f:\Gamma^n\to\R$ defined on some convex cone $\Gamma^n\subset\R^n$ which contains the positive cone $\Gamma_+^n$ as an \emph{admissible} speed function provided the following conditions hold.


\begin{Conditions} \label{conds:admissibility} 
\mbox{}
\begin{enumerate}[(i)]
\item $f$ is symmetric under permutations of its arguments.
\item $f$ is strictly increasing in each argument. 
 \item $f$ is homogeneous of degree $1$.
\item $f(1,\dots,1)=n$.
 \end{enumerate}
\end{Conditions}

The final normalization condition is merely for convenience, and does not diminish the generality of our results (it can be arranged by simply rescaling the time parameter).

Given an admissible speed $f$, a one-parameter family $X:M^n\times I\to \R^{n+1}$ of hypersurface immersions $X(\cdot,t):M^n\to\R^{n+1}$ \emph{evolves by the corresponding curvature flow} if
\begin{equation} \label{E:theflow}
  \frac{\partial X}{\partial t} \left( x, t\right) = - 
  F(x,t)\nu\left( x, t\right) \mbox{,}
\end{equation}
where $\nu$ is a local choice of unit normal vector field and, denoting the principal curvatures (eigenvalues of the shape operator $A:= D\nu$) by $\kappa_1,\dots,\kappa_n$, the scalar $F$ is defined by $F:=f(\kappa_1,\dots,\kappa_n)$.

Aside from the freedom to reparametrise, which can be removed by fixing a particular choice of parametrisation, flows by such speeds are parabolic and thus enjoy nice properties such as uniqueness and short time existence but, since they are fully nonlinear, can still behave quite badly in the absence of additional conditions in dimensions $n\ge 3$ (e.g. losing regularity or convexity properties \cite{MR3070558}). For this reason, various mild concavity conditions for $f$ are typically imposed in dimensions $n\ge 3$. 

Unless $n=2$, 
we shall be interested in flows by admissible speeds which satisfy one of the following additional conditions.

\begin{Conditions}\label{conds:concavity}
\mbox{}
\begin{enumerate}[(i)]
\item $f$ is convex.
\item $f$ is concave and vanishes on\footnote{Note that any admissible speed function admits a continuous extension to the closure $\overline\Gamma{}^n$ of $\Gamma^n$ (cf. \cite[Lemma 1]{MR3070558}).} $\partial\Gamma^n$.
\item $\Gamma^n=\Gamma_+^n$ and both $f$ and the dual function $f_\ast$ defined by\\ 
$f_\ast(r_1,\dots,r_n):=f(r_1^{-1},\dots,r_n^{-1})^{-1}$ are concave.
\end{enumerate}
\end{Conditions}


There are many non-trivial examples (defined on non-trivial cones) of speed functions satisfying Conditions \ref{conds:admissibility} and one or more of the Conditions \ref{conds:concavity}. See, for example, \cite{MR4249616,MR3218814,MR3299822,MR3070558}.

Any smooth, symmetric function $f:\Gamma^n\subset\R^n\to\R$ gives rise to a well-defined smooth function on the set of symmetric $n\times n$ matrices with eigenvalues in $\Gamma^n$ which is invariant under conjugation by orthogonal matrices (and vice versa) \cite{Glaeser63}. We shall denote both of these functions using the same letter (in this case $f$). First and second partial derivatives of these functions (with respect to either eigenvalue or matrix variables) will be denoted $\dot f^k$, $\ddot f^{pq}$ and $\dot{f}^{kl}$, $\ddot f^{pq,rs}$, respectively. That is,
\[
\left.\tfrac{d}{ds}\right|_{s=0}f\left(z+sv\right)= \dot f^{k} \left(z\right)v_{k}\,,\;\;
\left.\tfrac{d^2}{ds^2}\right|_{s=0}f\left(z+sv\right) = \ddot f^{pq} \left(z\right)v_pv_q
\]
and
\[
\left.\tfrac{d}{ds}\right|_{s=0}f\left(Z+sV\right) = \dot f^{kl} \left(Z\right)V_{kl}\,,\;\; \left.\tfrac{d^2}{ds^2}\right|_{s=0}f\left( Z+sV\right)=\ddot f^{pq,rs}\left(Z\right)V_{pq}V_{rs}\mbox{.}
\]
When evaluating these derivatives at the principal curvatures/component matrix of the second fundamental form with respect to an orthonormal basis along an evolving hypersurface, we use the corresponding capital letter. E.g. $\dot f^k(\kappa_1,\dots,\kappa_n)=\dot F^k$.

When $f$ is an admissible speed, the derivative $\dot f$ is positive definite. It therefore induces a natural elliptic operator $\mathcal{L}:=\dot F^{kl} \nabla_{k} \nabla_{l}$, an inner product, $\left< v, w\right>_F := \dot F^{ij} v_i w_j$, and a norm $\left| v\right|^2_F := \dot F^{ij} v_i v_j$ along any solution to the corresponding flow. We denote by $\left< u, v\right>$ and $\left| v \right|^2$ (without the subscript) the inner product and norm with respect to the induced evolving metric $g$.

\subsection{Cut-off functions}

As in \cite{MR4609824}, for localization we introduce smooth cut-off functions $\zeta \in C_0^\infty(\R^{n+1})$ localized in ambient balls $B_\lambda\subset\R^{n+1}$ of radius $\lambda$, and we set $\psi := \zeta \circ X$. 
%
A short computation yields
\begin{equation} \label{E:evlnpsi}
  \left( \partial_t - \mathcal{L} \right) \psi = -\dot F^{ij} D^2 \zeta\left(D_iX, D_jX \right) = - \dot F^{ij} D_{i}D_j \zeta \mbox{,}
  \end{equation}
where for the last step we have employed a local orthonormal frame for $\R^{n+1}$ adapted to the hypersurface.

For fixed $r$ and $R$ satisfying $0<r<R\leq \lambda$, we require that $\zeta : \mathbb{R}^{n+1} \rightarrow [0,\infty)$ satisfy:
\begin{enumerate}
  \item[\textnormal{1.}] $\zeta\left( X\right) = 0$ for $X\not \in B_R$ and $\zeta\left( X \right) =1$ for $X \in B_r$;
  \item[\textnormal{2.}] $\left| D_i \zeta \right|^2 \leq 10 \left( R-r\right)^{-2} \zeta$ for each $i$;
  \item[\textnormal{3.}] $\left| D_i D_j \zeta \right|^2 \leq 10 \left( R-r\right)^{-2}$ for each $i$ and $j$.
\end{enumerate}

\subsection{A Poincar\'e-type inequality}

Denote by 
$$\mathrm{Cyl} := \cup_{m=0}^{n-1} \mathrm{Cyl}_m$$
the set of \emph{cylindrical points}, where $\mbox{Cyl}_m$ is the set of \emph{$m$-cylindrical points}; that is, those points $\R^n$ with $m$ coordinates equal to zero and $n-m$ coordinates positive and equal. We will make use of the following inequality (which generalizes an idea from \cite{MR3714512}).

\begin{proposition}[Poincar\'e-type inequality {\cite[Proposition 2.7]{MR3669776}}]\label{prop:Poincare}
Let $\Gamma^n \subset \R^n$ be an open, symmetric cone satisfying $\overline{\Gamma}{}^n\backslash \left\{ 0 \right\} \subset \left\{w \in \R^n: w_1+\dots+w_n>0\right\}$ and $\overline{\Gamma}{}^n\cap \mathrm{Cyl} = \emptyset$. There is a constant $\gamma = \gamma(n,\Gamma^n) > 0$ with the following property.  Let $X: M^n \rightarrow \mathbb{R}^{n+1}$ be a smooth hypersurface and $u\in W^{2, 1}( M^n)$ a function for which the set\\
$\left\{(\kappa_1(x),\dots,\kappa_n(x)): u(x)>0, x \in M^n \right\}$ is precompact and lies in $\Gamma^n$. For every $r>0$,
$$\gamma \int u^2 \left| A\right|^2 d\mu \leq r^{-1} \int \left| \nabla u \right|^2 d\mu + \left( 1 + r\right) \int u^2 \frac{\left| \nabla A \right|^2}{H^2} d\mu \mbox{.}$$
Here $H$ denotes the mean curvature.
\end{proposition}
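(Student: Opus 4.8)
The plan is to run a single integration by parts against the weight $u^2$, letting Simons' identity produce the term $\int u^2|\nabla A|^2/H^2$ and an algebraic consequence of $\overline\Gamma^n\cap\mathrm{Cyl}=\emptyset$ produce the term $\int u^2|A|^2$. Two elementary facts about the cone enter, with constants depending only on $n$ and $\Gamma^n$. First, since $\overline\Gamma^n\setminus\{0\}\subset\{w_1+\dots+w_n>0\}$ and $\overline\Gamma^n\cap\S^{n-1}$ is compact, on the set $\{u>0\}$ we have $H>0$ and $c_1^{-1}H\le|A|\le c_1H$. Second — and this is the crucial input — the scale-invariant polynomial $P(A):=H\,\tr(A^3)-|A|^4$ satisfies $P(A)\ge c_2|A|^4$ on $\overline\Gamma^n$. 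The point is that on the positive cone $P(A)\ge0$ by the Cauchy--Schwarz inequality applied to $(\kappa_i^{3/2})$ and $(\kappa_i^{1/2})$, with equality \emph{precisely} when the nonzero $\kappa_i$ all coincide, i.e. precisely on $\mathrm{Cyl}$; compactness of $\overline\Gamma^n\cap\S^{n-1}$ then upgrades positivity to the stated quantitative bound. (When $\Gamma^n$ is not contained in the positive cone one replaces $P$ by another homogeneous polynomial, positive on $\overline\Gamma^n\cap\S^{n-1}$, arising as the zeroth-order part of a Simons-type computation; I suppress this point here.)

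For the computation I would integrate the identity $0=\int_M\operatorname{div}\!\big(u^2\,\nabla|A|^2/(2H^2)\big)\,d\mu$ on the closed hypersurface $M$ — the weight $u^2/H^2$ is singular only along $\{u=0\}$ (where $H$ may vanish), which is harmless and handled by the usual truncation $u\mapsto(u-\delta)_+$, $\delta\downarrow0$. Expanding the divergence and substituting Simons' identity for hypersurfaces in $\R^{n+1}$, namely $\tfrac12\Delta|A|^2=\langle A,\nabla^2H\rangle+H\,\tr(A^3)-|A|^4+|\nabla A|^2$, gives
\[
\int_M u^2\,\frac{H\,\tr(A^3)-|A|^4}{H^2}\,d\mu = -\int_M u^2\,\frac{|\nabla A|^2}{H^2}\,d\mu + \int_M u^2\Big(\frac{\langle\nabla|A|^2,\nabla H\rangle}{H^3}-\frac{\langle A,\nabla^2H\rangle}{H^2}\Big)d\mu - \int_M\frac{u\,\langle\nabla u,\nabla|A|^2\rangle}{H^2}\,d\mu.
\]
By the algebraic fact and $|A|\asymp H$ on $\{u>0\}$, the left-hand side is at least $c_2c_1^{-2}\int_M u^2|A|^2\,d\mu$, while the first term on the right is $\le0$; it therefore suffices to estimate the remaining two integrals. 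In the term carrying $\nabla^2H$ I integrate by parts once more, using the Codazzi identity $\nabla_iA^i{}_j=\nabla_jH$; all second derivatives then cancel, leaving integrands of the schematic shapes $u^2|\nabla H|^2/H^2$, $u^2|A|\,|\nabla H|^2/H^3$ and $|u|\,|\nabla u|\,|A|\,|\nabla H|/H^2$. Now the universal bounds $|\nabla H|\le\sqrt n\,|\nabla A|$ and $|\nabla|A|^2|\le2|A|\,|\nabla A|$, together with $|A|\le c_1H$ on $\{u>0\}$, show that every integrand on the right-hand side is controlled, up to a constant depending only on $n,\Gamma^n$, by $u^2|\nabla A|^2/H^2$ or by $|u|\,|\nabla u|\,|\nabla A|/H$. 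Applying $\int|u|\,|\nabla u|\,|\nabla A|/H\le\tfrac1{2r}\int|\nabla u|^2+\tfrac r2\int u^2|\nabla A|^2/H^2$ to the cross term then yields
\[
c_2c_1^{-2}\int_M u^2|A|^2\,d\mu \le C(1+r)\int_M u^2\,\frac{|\nabla A|^2}{H^2}\,d\mu + Cr^{-1}\int_M|\nabla u|^2\,d\mu
\]
for some $C=C(n,\Gamma^n)$, which is the assertion with $\gamma=c_2/(c_1^2C)$; general $u\in W^{2,1}(M^n)$ follows by density.

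The single non-routine ingredient is the algebraic step: recognizing the zeroth-order remainder of the Simons computation for $|A|^2/H^2$ as a multiple of $P(A)/H^2$, and seeing that $P$ is strictly positive on $\overline\Gamma^n\setminus\{0\}$ exactly because $\overline\Gamma^n$ avoids every stratum $\mathrm{Cyl}_m$ — so that the hypothesis $\overline\Gamma^n\cap\mathrm{Cyl}=\emptyset$ is precisely what converts this Simons identity into a Poincar\'e-type inequality. Everything else is bookkeeping: tracking which curvature-gradient quantities are dominated by $|\nabla A|$, carrying out the second integration by parts cleanly, and choosing the split in the final weighted Cauchy--Schwarz (equivalently, optimizing over $r$).
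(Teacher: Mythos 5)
The present paper imports this inequality from \cite[Proposition 2.7]{MR3669776} without reproving it, so there is no in-paper argument to compare against; what follows assesses your proposal on its own terms. Your overall strategy --- a Simons-type divergence identity weighted by $u^2/H^2$, a further integration by parts on the $\langle A,\nabla^2H\rangle$ term using Codazzi $\nabla_i A^i{}_j=\nabla_j H$, pointwise Kato bounds, and a weighted Cauchy--Schwarz to split off $r^{-1}\int|\nabla u|^2$ --- is a plausible route, the bookkeeping you describe in the second half is correct, and your first algebraic input (that $|A|$ and $H$ are uniformly comparable on $\{u>0\}$) does follow from mean-convexity of $\overline\Gamma{}^n\setminus\{0\}$ and compactness, as you say.

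The decisive gap is your second algebraic claim, that $P(A):=H\,\tr(A^3)-|A|^4$ satisfies $P\ge c_2|A|^4$ on $\overline\Gamma{}^n$ whenever $\overline\Gamma{}^n\cap\mathrm{Cyl}=\emptyset$. This is false in the generality the proposition requires. Your Cauchy--Schwarz argument applied to $(\kappa_i^{3/2})$ and $(\kappa_i^{1/2})$ needs $\kappa_i\ge 0$ for every $i$, but the hypotheses only impose mean-convexity and exclusion of the cylinder strata, not $\Gamma^n\subset\Gamma^n_+$. Explicitly, for $n=2$ one computes $P=\kappa_1\kappa_2(\kappa_1-\kappa_2)^2$, which is strictly negative whenever $\kappa_1<0<\kappa_2$; for $n=3$ one has $P(-\varepsilon,1,1)=-2\varepsilon-4\varepsilon^2-2\varepsilon^3<0$. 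Such curvature vectors lie in perfectly admissible cones (mean-convex, disjoint from $\mathrm{Cyl}$), and indeed they are exactly the ones the present paper needs the proposition for: the convexity estimates run the iteration on the bad region where $\kappa_1\le -\varepsilon F$. Your parenthetical --- ``one replaces $P$ by another homogeneous polynomial\ldots\ I suppress this point here'' --- is suppressing the one non-routine step that makes the result true as stated. The Simons identity for $|A|^2$ forces the specific zeroth-order remainder $P$; you cannot swap in a more convenient polynomial without changing the quantity whose Laplacian you compute and redoing the whole divergence calculation, and it is precisely there that the hypothesis $\overline\Gamma{}^n\cap\mathrm{Cyl}=\emptyset$ must enter in a subtler way than you indicate. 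As written, your argument establishes the inequality only for $\Gamma^n\subset\Gamma^n_+$ and therefore does not prove the proposition.
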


\subsection{The Sobolev inequality of Michael and Simon}

We shall need the following Sobolev inequality, which is an immediate consequence of the celebrated inequality of Michael and Simon and H\"older's inequality.

\begin{proposition}[Michael--Simon Sobolev inequality \cite{MR0344978}]
Let $X:M^n\to\R^{n+1}$, $n\ge 3$, be a smoothly immersed hypersurface. If $u\in H^1_0(M^n)$, then
\begin{equation}\label{eq:MichaelSimon}
\left(\int u^{2^\ast}d\mu\right)^{\frac{1}{2^\ast}}\le C\left(\int\left(\vert\nabla u\vert^2+H^2u^2\right)d\mu\right)^{\frac{1}{2}}\,,
\end{equation}
where $\frac{1}{2^\ast}=\frac{1}{2}-\frac{1}{n}$ and $C=C(n)$. 
\end{proposition}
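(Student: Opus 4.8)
The plan is to deduce the stated $L^2$ inequality \eqref{eq:MichaelSimon} from the classical first-order Michael--Simon inequality
\[
\left(\int |v|^{\frac{n}{n-1}}\,d\mu\right)^{\frac{n-1}{n}}\le C(n)\int\left(|\nabla v|+|H|\,|v|\right)d\mu\,,
\]
valid for $v\in C^1_c(M^n)$, by the standard substitution $v=|u|^\beta$ with a well-chosen $\beta$. Since the asserted estimate is unchanged upon replacing $u$ by $|u|$, I may assume $u\ge 0$; and by a routine truncation/approximation argument (apply the estimate to $\min\{u,k\}$, or to $(u^2+\varepsilon^2)^{1/2}-\varepsilon$, and let $k\to\infty$, $\varepsilon\to 0$ using monotone convergence and Fatou) it suffices to treat $u\in C^1_c(M^n)$ with $u\ge 0$; the general case $u\in H^1_0(M^n)$ then follows by density once the right-hand side is finite.

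First I would fix $\beta:=\tfrac{2(n-1)}{n-2}>1$, chosen so that $\tfrac{\beta n}{n-1}=\tfrac{2n}{n-2}=2^\ast$ and $2(\beta-1)=2^\ast$ as well. Applying the Michael--Simon inequality to $v=u^\beta$ and using $|\nabla v|=\beta\,u^{\beta-1}|\nabla u|$ gives
\[
\left(\int u^{2^\ast}d\mu\right)^{\frac{n-1}{n}}\le C(n)\int\left(\beta\,u^{\beta-1}|\nabla u|+u^{\beta-1}\,|H|\,u\right)d\mu\,.
\]
Next I would apply the Cauchy--Schwarz (Hölder) inequality to each term on the right, factoring out $u^{\beta-1}$:
\[
\int u^{\beta-1}|\nabla u|\,d\mu\le\left(\int u^{2(\beta-1)}d\mu\right)^{\!\frac12}\!\left(\int|\nabla u|^2d\mu\right)^{\!\frac12},\quad
\int u^{\beta-1}|H|\,u\,d\mu\le\left(\int u^{2(\beta-1)}d\mu\right)^{\!\frac12}\!\left(\int H^2u^2\,d\mu\right)^{\!\frac12},
\]
and, since $2(\beta-1)=2^\ast$, both right-hand factors carry $\bigl(\int u^{2^\ast}\,d\mu\bigr)^{1/2}$.

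Combining these and using $\sqrt{a}+\sqrt{b}\le\sqrt{2}\,\sqrt{a+b}$ to collect the gradient and curvature terms yields
\[
\left(\int u^{2^\ast}d\mu\right)^{\frac{n-1}{n}}\le C(n)\,\beta\left(\int u^{2^\ast}d\mu\right)^{\frac12}\left(\int\bigl(|\nabla u|^2+H^2u^2\bigr)d\mu\right)^{\frac12}.
\]
If $\int u^{2^\ast}d\mu=0$ there is nothing to prove; otherwise I would divide through by $\bigl(\int u^{2^\ast}d\mu\bigr)^{1/2}$ and use $\tfrac{n-1}{n}-\tfrac12=\tfrac{n-2}{2n}=\tfrac{1}{2^\ast}$ to obtain exactly \eqref{eq:MichaelSimon}, with $C$ depending only on $n$ after absorbing $\beta=\beta(n)$. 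The only point requiring genuine care is the justification of the substitution $v=|u|^\beta$ and the passage from $C^1_c(M^n)$ to $H^1_0(M^n)$ — that is, the truncation and density step indicated above; the remainder is bookkeeping of exponents.
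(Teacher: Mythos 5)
Your proof is correct and is precisely the derivation the paper has in mind: the paper merely states the result ``is an immediate consequence of the celebrated inequality of Michael and Simon and H\"older's inequality,'' and applying the $L^1$ Michael--Simon inequality to $|u|^\beta$ with $\beta=\tfrac{2(n-1)}{n-2}$ followed by Cauchy--Schwarz, as you do, is exactly that computation. The exponent bookkeeping ($\tfrac{\beta n}{n-1}=2(\beta-1)=2^\ast$ and $\tfrac{n-1}{n}-\tfrac12=\tfrac{1}{2^\ast}$) checks out, and the reduction to nonnegative $C^1_c$ functions plus density is the standard routine caveat.
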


Unfortunately, this only applies when $n\ge 3$. When $n=2$, we instead 
make use of the following Poincar\'e inequality.

\begin{proposition}[Poincar\'e inequality] \label{T:Poincare}
Let $X:M^2\to\R^{3}$ be a smoothly immersed surface. If $u\in H^1_0(M^n)$ and $q\ge 1$, then
\begin{equation}\label{eq:classicalPoincare}
\left(\frac{1}{\vert{\spt u}\vert}\int \vert u\vert^{2q}d\mu\right)^{\frac{1}{2q}}\le C(q^2+1)\left(\int\big(\vert\nabla u\vert^2+H^2u^2\big)d\mu\right)^{\frac{1}{2}}\,,
\end{equation}
where $C=C(n)$ is the Sobolev constant.
\end{proposition}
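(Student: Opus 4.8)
The plan is to derive \eqref{eq:classicalPoincare} from the Michael--Simon Sobolev inequality \eqref{eq:MichaelSimon} by an exponent-bootstrapping argument, exploiting the fact that in dimension $n=2$ the Sobolev exponent $2^\ast$ is arbitrarily large: since $\tfrac{1}{2^\ast} = \tfrac{1}{2} - \tfrac{1}{2} = 0$, the left-hand side of \eqref{eq:MichaelSimon} degenerates, so one must instead run Michael--Simon in its original $L^1 \to L^{n/(n-1)}$ form, i.e. $\big(\int |v|^{2} d\mu\big)^{1/2} \le C \int (|\nabla v| + H|v|) d\mu$ for $v \in H^1_0(M^2)$, and then feed in $v = |u|^q$ (after the usual approximation argument that lets us treat $|u|^q$ as a valid test function). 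First I would substitute $v = |u|^q$, so that $|\nabla v| \le q|u|^{q-1}|\nabla u|$ almost everywhere, giving
\[
\left(\int |u|^{2q} d\mu\right)^{1/2} \le C\left(q\int |u|^{q-1}|\nabla u|\, d\mu + \int H |u|^q d\mu\right).
\]

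Next I would estimate the two terms on the right by Cauchy--Schwarz, writing $|u|^{q-1}|\nabla u| = \big(|u|^{q-1}\big)\big(|\nabla u|\big)$ and $H|u|^q = \big(H|u|^{q-1}\big)\big(|u|\big)$ — or, more cleanly, splitting as $|u|^{q-1}|\nabla u| = |u|^{q-1} \cdot |\nabla u|$ and applying Hölder with exponents $2,2$ after inserting the support volume. Concretely, $\int |u|^{q-1}|\nabla u|\,d\mu \le \big(\int |u|^{2q-2} d\mu\big)^{1/2}\big(\int |\nabla u|^2 d\mu\big)^{1/2}$ and similarly $\int H|u|^q\,d\mu \le \big(\int |u|^{2q-2}d\mu\big)^{1/2}\big(\int H^2 |u|^2 d\mu\big)^{1/2}$. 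This produces a factor $\big(\int |u|^{2q-2}d\mu\big)^{1/2}$ that we would like to reabsorb; the honest way is an interpolation/iteration: bound $\int |u|^{2q-2}d\mu$ in terms of $\int |u|^{2q}d\mu$ and $|\spt u|$ by Hölder ($\int |u|^{2q-2} \le |\spt u|^{1/q}\big(\int |u|^{2q}\big)^{(q-1)/q}$), substitute back, and use Young's inequality to absorb the resulting power of $\big(\int |u|^{2q}d\mu\big)^{1/2}$ into the left-hand side, paying a constant that scales like $q$ in the process. Tracking these constants carefully through the absorption step is what produces the claimed $q^2+1$ dependence (the linear factor $q$ from $|\nabla |u|^q| \le q |u|^{q-1}|\nabla u|$ combined with a further factor $\sim q$ from the Young/absorption bookkeeping), and dividing through by $|\spt u|^{1/(2q)}$ — which is legitimate since $u \in H^1_0$ has compact support — yields the normalized form in \eqref{eq:classicalPoincare}.

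The main obstacle I expect is the constant bookkeeping in the absorption step: one must be careful that the power of $\big(\int|u|^{2q}d\mu\big)$ extracted via Hölder is genuinely strictly less than $1$ (it is $(q-1)/q < 1$), so that Young's inequality applies and the $L^{2q}$ norm really can be moved to the left, and then one must verify that the leftover constant is polynomial — indeed quadratic — in $q$ rather than exponential. A secondary technical point is justifying the use of $|u|^q$ as a test function in \eqref{eq:MichaelSimon} when $q$ is not an integer and $u$ is merely $H^1_0$; this is handled by the standard device of replacing $|u|$ with $(|u|^2 + \varepsilon^2)^{1/2} - \varepsilon$ (or truncating $|u|$), applying the inequality, and letting $\varepsilon \to 0$ with dominated/monotone convergence, noting the chain rule for Sobolev functions. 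Once these two points are dispatched the rest is routine, and the stated dependence $C = C(n)$ on the Michael--Simon constant (together with the explicit $q^2+1$) follows by inspection.
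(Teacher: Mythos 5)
The paper states Proposition~\ref{T:Poincare} without proof, so there is no in-paper argument to compare against; your proposal is the natural derivation and it is correct. Two remarks. First, once you have applied Michael--Simon in its $L^1\to L^2$ form to $v=|u|^q$, used Cauchy--Schwarz, and inserted the H\"older bound on $\int|u|^{2q-2}$, you arrive at
\begin{equation*}
\left(\int|u|^{2q}\,d\mu\right)^{\frac12}\le C(q+1)\,|\spt u|^{\frac{1}{2q}}\left(\int|u|^{2q}\,d\mu\right)^{\frac{q-1}{2q}}\left(\int\big(|\nabla u|^2+H^2u^2\big)\,d\mu\right)^{\frac12}\,,
\end{equation*}
and the cleanest conclusion is simply to divide both sides by $\big(\int|u|^{2q}\,d\mu\big)^{(q-1)/(2q)}$ (the inequality being trivial when $u\equiv0$); the exponents collapse to $1/(2q)$ on the left, no Young step is needed, and one obtains the \emph{sharper} constant $C(q+1)$. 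Thus the ``further factor $\sim q$ from the Young/absorption bookkeeping'' in your accounting does not actually arise --- the only factor of $q$ comes from the chain rule $|\nabla|u|^q|\le q|u|^{q-1}|\nabla u|$ --- and the stated $C(q^2+1)$ is just a generous over-estimate of the $C(q+1)$ the argument really yields, so the proposition certainly follows. Second, your instinct to flag exponential blow-up in $q$ as the hazard is apt: if one absorbs via Young with a $q$-independent small parameter and then divides by the coefficient of $a^{1/2}$, one does pick up a power $(C(q+1))^q$; the division above sidesteps this entirely, which is why it is preferable. Your treatment of $v=|u|^q$ as a test function via approximation is handled correctly.
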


\subsection{Stampacchia's lemma}

Proving each of our results requires an application of Stampacchia's lemma, which we now recall.

\begin{lemma}[Stampacchia's lemma \cite{MR0251373, MR1786735}] \label{T:Stampacchia}
Let $\varphi: \left[ k_0, \infty \right) \rightarrow \mathbb{R}$ be a non-negative, nonincreasing function. If
\begin{equation}
\varphi\left( h \right) \leq \frac{C}{\left( h-k\right)^\alpha }\varphi\left( k \right)^\beta
\end{equation}
when $h>k>k_0$ for some constants $C>0$, $\alpha>0$ and $\beta>1$, then
\[
\varphi\left( k_0 + d\right) = 0 \mbox{,}
\]
where $d^\alpha = C\, \varphi\left( k_0 \right)^{\beta-1} 2^{\alpha\beta / \left( \beta -1\right)}$.
\end{lemma}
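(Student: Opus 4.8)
The plan is to prove the lemma by the classical dyadic iteration of De Giorgi and Stampacchia on the level sets. First I would dispose of the trivial case: if $\varphi(k_0) = 0$, then $d = 0$ and, since $\varphi$ is nonnegative and nonincreasing, $\varphi(k_0 + d) = \varphi(k_0) = 0$; so from now on one may assume $\varphi(k_0) > 0$, and hence $d > 0$.

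Next I would introduce the increasing sequence of levels $k_s := k_0 + d\,(1 - 2^{-s})$ for $s = 0, 1, 2, \dots$, so that $k_0$ is the given value, $k_s \uparrow k_0 + d$, and $k_{s+1} - k_s = 2^{-(s+1)}d$. The heart of the matter is the claim
$$\varphi(k_s) \le \varphi(k_0)\,2^{-\mu s}, \qquad \mu := \frac{\alpha}{\beta - 1},$$
which I would prove by induction on $s$. The base case $s = 0$ is an equality. For the inductive step, apply the hypothesis with $h = k_{s+1}$ and $k = k_s$ (when $s = 0$ this means $k = k_0$, which is permissible: letting $k \downarrow k_0$ in the hypothesis and using the monotonicity of $\varphi$ yields $\varphi(h) \le C\,(h-k_0)^{-\alpha}\varphi(k_0)^\beta$), obtaining
$$\varphi(k_{s+1}) \le \frac{C}{(k_{s+1}-k_s)^\alpha}\,\varphi(k_s)^\beta = \frac{C\,2^{\alpha(s+1)}}{d^\alpha}\,\varphi(k_s)^\beta \le \frac{C\,2^{\alpha(s+1)}}{d^\alpha}\,\varphi(k_0)^\beta\,2^{-\mu\beta s}.$$
Since $\mu(\beta - 1) = \alpha$, one has $\alpha(s+1) - \mu\beta s = \alpha - \mu s$, so the right-hand side equals $\dfrac{C\,\varphi(k_0)^{\beta-1}\,2^{\alpha\beta/(\beta-1)}}{d^\alpha}\cdot \varphi(k_0)\,2^{-\mu(s+1)}$; and by the very definition $d^\alpha = C\,\varphi(k_0)^{\beta-1}\,2^{\alpha\beta/(\beta-1)}$ the leading factor is exactly $1$. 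Hence $\varphi(k_{s+1}) \le \varphi(k_0)\,2^{-\mu(s+1)}$, closing the induction.

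Finally, since $\mu > 0$, we have $\varphi(k_s) \to 0$ as $s \to \infty$; and since $\varphi$ is nonincreasing with $k_s < k_0 + d$ for every $s$, it follows that $0 \le \varphi(k_0 + d) \le \varphi(k_s) \to 0$, i.e. $\varphi(k_0 + d) = 0$, as desired.

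I do not expect a genuine obstacle in this argument. The only points demanding care are the bookkeeping in the inductive step — specifically, choosing the decay exponent $\mu = \alpha/(\beta-1)$ so that the $s$-dependent powers of $2$ cancel, and matching the residual constant precisely to the prescribed value of $d$ — together with the minor remark that the assumed inequality remains valid at the endpoint level $k = k_0$ by a one-sided limit and the monotonicity of $\varphi$. It is precisely the condition $\beta > 1$ that makes $\mu$ finite and the geometric decay possible.
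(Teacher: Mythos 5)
Your argument is correct and complete: the dyadic levels $k_s=k_0+d(1-2^{-s})$, the inductive decay $\varphi(k_s)\le\varphi(k_0)2^{-\alpha s/(\beta-1)}$, the limiting justification for using $k=k_0$, and the exact matching of the constant to $d^\alpha=C\varphi(k_0)^{\beta-1}2^{\alpha\beta/(\beta-1)}$ all check out. This is precisely the classical De Giorgi--Stampacchia iteration given in the references the paper cites for this lemma (the paper itself states the lemma without proof), so there is nothing to add.
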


\section{Hypersurfaces evolving by convex speeds} \label{S:Fconvex}

We first consider flows of hypersurfaces by \emph{convex} admissible speeds $f:\Gamma^n\to\mathbb{R}$. A key feature of such flows is that they preserve any convex cone of curvatures which contains the positive cone, and hence remain uniformly parabolic throughout the evolution \cite[Lemma 2.4]{MR3218814}. 
That is, if, for some convex symmetric cone $\Gamma^n_0\subset\R^n$ satisfying $\Gamma^n_+\subset \Gamma^n_0$, the principal curvatures satisfy $\vec\kappa:=(\kappa_1,\dots,\kappa_n)\in \Gamma^n_0$ at the parabolic boundary of a region in which the flow is properly defined, then $\vec\kappa\in \Gamma^n_0$ in the interior of this region as well. If, in addition, $\Gamma^n_0\Subset\Gamma^n$ (by which we mean that $\overline{\Gamma}{}^n_0\cap S^n\subset\Gamma^n$ or, equivalently, $\overline\Gamma{}^n_0\setminus\{0\}\subset\Gamma^n$), then, since $\dot f$ is zero-homogeneous, there exists some $\Lambda=\Lambda(n,f,\Gamma^n_0)<\infty$ such that
\begin{equation}\label{eq:uniform parabolic}
\Lambda^{-1}\delta^{ij}\le \dot F^{ij}\le \Lambda\delta^{ij}\,.
\end{equation}
Note that, for a compact hypersurface with $\vec\kappa\in \Gamma^n$ at all points, some such cone $\Gamma^n_0$ can always be found when $\Gamma^n$ is convex and contains $\Gamma^n_+$.

The convexity estimate \cite[Theorem 1.1]{MR3218814} admits the following localization.

\begin{theorem}[Convexity estimate --- convex speeds]\label{thm:convexityc}
Let $f:\Gamma^n\to\R$ be a convex admissible speed function with $\Gamma^n_+\subset\Gamma^n$. If a solution to the flow with speed $F=f(\vec\kappa)$ is properly defined in the spacetime cylinder $\overline B_{\lambda R} \times \left[0, \frac{1}{2n} R^2 \right) \subset \mathbb{R}^{n+1} \times \mathbb{R}$ and satisfies
\begin{enumerate}
\item $\vec\kappa(x,t)\in \Gamma^n_0$ for all $(x,t)\in B_{\lambda R} \times \left\{ 0 \right\} \cup \partial B_{\lambda R} \times \left( 0, \frac{1}{2n} R^2 \right)$ for some convex cone $\Gamma^n_0\Subset\Gamma^n$.
\item $\displaystyle\sup_{B_{\lambda R} \times \left\{ 0 \right\} \cup B_{\lambda R}\setminus B_{(\lambda-\delta)R} \times \left( 0, \frac{1}{2n} R^2 \right)} F \leq \Theta\, R^{-1}$, and
\item $\displaystyle\frac{R^2}{2n} \int_{B_{\lambda R}} F^q d\mu_0 + \frac{1}{\delta^2} \int_0^{\frac{R^2}{2n}}\!\!\!\int_{B_{\lambda R} \backslash B_{(\lambda-\delta)R}} F^q d\mu\, dt \leq \left( \Theta\, R^{-1}\right)^q \left( V \, R\right)^{n+2}$,
\end{enumerate}
then, given any $\varepsilon>0$ and $\vartheta \in \left(0,1\right)$, it satisfies
\begin{equation}\label{eq:convexity convex speeds}
\kappa_1 \geq -\varepsilon\,F - C_\varepsilon R^{-1}\;\;\text{in}\;\; B_{\left( \lambda- \delta\right) \vartheta R} \times \left( 0, \tfrac{1}{2n} R^2 \right)\,,
\end{equation}
where $C_\varepsilon= c\left(n,f,\Gamma^n_0,q,\varepsilon \right) \Theta \left(\frac{\Theta\, V}{1-\vartheta} \right)^{\frac{2}{q}}$.
\end{theorem}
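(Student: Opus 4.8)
The argument is the localized Huisken--Stampacchia iteration of \cite{MR4609824}, driven by the pinching function of the global convexity estimate \cite[Theorem 1.1]{MR3218814}. Since $f$ is convex, condition (1) is preserved (\cite[Lemma 2.4]{MR3218814}), so $\vec\kappa\in\Gamma^n_0$ throughout $B_{\lambda R}\times[0,\tfrac1{2n}R^2)$, \eqref{eq:uniform parabolic} holds with $\Lambda=\Lambda(n,f,\Gamma^n_0)$, and the ratios $\kappa_i/F$, $|A|/F$ are bounded in terms of $n,f,\Gamma^n_0$. Fix $\varepsilon>0$ and a small $\sigma=\sigma(n,f,\Gamma^n_0,q,\varepsilon)>0$ (to be chosen), and work with a pinching function $f_\sigma$ that is homogeneous of degree $\sigma$ in $\vec\kappa$, vanishes outside the cone $\Sigma:=\{\vec\kappa\in\overline\Gamma^n_0:\kappa_1\le-\tfrac\varepsilon2F(\vec\kappa)\}$, and is comparable there to $(-\kappa_1-\tfrac\varepsilon2F)_+\,F^{\sigma-1}$. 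Because cylindrical points have a vanishing coordinate, they lie outside $\Sigma$, so $\overline\Sigma\cap\mathrm{Cyl}=\emptyset$ and $\overline\Sigma\setminus\{0\}\subset\Gamma^n$ (using $\Gamma^n_0\Subset\Gamma^n$); hence Proposition \ref{prop:Poincare} applies to functions supported in $\Sigma$. As in \cite[Theorem 1.1]{MR3218814}, $f_\sigma$ satisfies on its support an evolution inequality of the schematic form
\[
(\partial_t-\mathcal L)f_\sigma\le\tfrac{2(1-\sigma)}{F}\langle\nabla F,\nabla f_\sigma\rangle_F-\varepsilon_0\,\tfrac{|\nabla A|^2}{H^2}f_\sigma+C_0\,\sigma\,f_\sigma|A|^2
\]
with $\varepsilon_0,C_0$ depending on $n,f,\Gamma^n_0,\varepsilon$: a favourable gradient/Codazzi part and a reaction term of order $\sigma$.

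Localize by setting $\psi=\zeta\circ X$ for a cut-off $\zeta$ of the kind in Section \ref{S:prelim} adapted to the shell $B_{\lambda R}\setminus B_{(\lambda-\delta)R}$, and put $u:=f_\sigma\psi^{2b}$ for a large $b=b(n)$. Combining the inequality for $f_\sigma$ with \eqref{E:evlnpsi} and estimating the cut-off terms via \eqref{eq:uniform parabolic}, the bounds on $|D\zeta|,|D^2\zeta|$, and the bound $F\le\Theta R^{-1}$ on the shell and on $\{t=0\}$ (hypothesis (2)) produces an evolution inequality for $u$ with a favourable Codazzi term, a reaction term $\le C\sigma\,u|A|^2$, and an error term supported on the initial slice and in the shell.

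Now iterate. With $k_0:=c(\Theta R^{-1})^\sigma$ an a priori bound for $f_\sigma$ on the initial slice and the shell (hypothesis (2)), test the inequality for $u$ against $(u-k)_+^{p-1}$, $k\ge k_0$, $p=p(n,q)$, and integrate over $M^n\times[0,T']$, $T'<\tfrac1{2n}R^2$. Integration by parts (using \eqref{eq:uniform parabolic}) turns the elliptic and Codazzi terms into control of $\int|\nabla((u-k)_+^{p/2})|^2$ and $\int(u-k)_+^p|\nabla A|^2/H^2$; since $\spt u\subset\Sigma$, Proposition \ref{prop:Poincare} absorbs the reaction term $\int(u-k)_+^p|A|^2$ into these once $\sigma$ is small relative to $\gamma(n,\Sigma)$ and $p$, and every remaining factor of $F$ is traded for a power of $u$ via $F\le(2cf_\sigma/\varepsilon)^{1/\sigma}$ on $\spt f_\sigma$, then controlled by the $L^q$-bound on $F$ of hypothesis (3). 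The Michael--Simon Sobolev inequality \eqref{eq:MichaelSimon} ($n\ge3$), or Proposition \ref{T:Poincare} ($n=2$), then yields a Stampacchia inequality $\varphi(h)\le C(h-k)^{-\alpha}\varphi(k)^\beta$ ($h>k>k_0$, $\beta>1$) for a weighted spacetime super-level quantity $\varphi$, with $\varphi(k_0)$ controlled by hypotheses (2)--(3) in terms of $\Theta R^{-1}$ and $(VR)^{n+2}$; running the iteration with a shrinking family of cut-offs down to radius $(\lambda-\delta)\vartheta R$ contributes the dependence on $1-\vartheta$. Lemma \ref{T:Stampacchia} then gives $f_\sigma\le K$ on $B_{(\lambda-\delta)\vartheta R}\times(0,\tfrac1{2n}R^2)$ with $K^{1/\sigma}=c(n,f,\Gamma^n_0,q,\varepsilon)\,\Theta R^{-1}(\Theta V/(1-\vartheta))^{2/q}$. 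Finally, where $\kappa_1<-\tfrac\varepsilon2F$ one has $-\kappa_1-\tfrac\varepsilon2F\le c\,f_\sigma F^{1-\sigma}\le cKF^{1-\sigma}$, and Young's inequality gives $cKF^{1-\sigma}\le\tfrac\varepsilon2F+C_\varepsilon R^{-1}$ with $C_\varepsilon R^{-1}\simeq\sigma\,\varepsilon^{-(1-\sigma)/\sigma}(cK)^{1/\sigma}$; this is \eqref{eq:convexity convex speeds} with $C_\varepsilon$ of the stated form.

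I expect the main obstacle to be the bookkeeping in the iteration step: $\sigma$ must be small enough --- depending on $\varepsilon$ through $\Sigma$ and hence $\gamma(n,\Sigma)$ --- for Proposition \ref{prop:Poincare} to absorb the reaction term, while simultaneously every occurrence of $F$ in the reaction and cut-off error terms must be traded for a power of $f_\sigma$ so that the iteration closes using only the $L^q$-control of $F$ of hypothesis (3); the dependence of all constants on $n,f,\Gamma^n_0,q,\varepsilon,\Theta,V,\vartheta$ must then be tracked through Lemma \ref{T:Stampacchia} to land exactly on the asserted $C_\varepsilon$. The fully nonlinear character of the flow enters only through the (already available) evolution inequality for $f_\sigma$ and \eqref{eq:uniform parabolic}; the remaining architecture is that of \cite{MR4609824}.
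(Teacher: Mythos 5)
Your proposal reproduces the paper's architecture essentially verbatim: the pinching function of \cite{MR3218814} truncated at level $\varepsilon F$ and weighted by a power $F^{\sigma-1}$, a localized $L^2$ estimate obtained by absorbing the $\sigma$-order reaction term with the Poincar\'e-type inequality of Proposition \ref{prop:Poincare} (your observation that the truncated cone avoids $\mathrm{Cyl}$ is exactly the point), and the Michael--Simon/Stampacchia upgrade to $L^\infty$ as in \cite{MR4609824}, with Proposition \ref{T:Poincare} substituting for Michael--Simon when $n=2$. Two steps need correction, one of which would fail as written.

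First, the pointwise bound $F\le(2cf_\sigma/\varepsilon)^{1/\sigma}$ ``on $\spt f_\sigma$'' is false. Where $-\kappa_1-\tfrac{\varepsilon}{2}F=\eta F$ with $\eta$ small, one has $f_\sigma\sim\eta F^\sigma$, which can exceed any fixed level $k$ when $F$ is large, while $(2cf_\sigma/\varepsilon)^{1/\sigma}\sim(2c\eta/\varepsilon)^{1/\sigma}F\ll F$; only the reverse inequality $f_\sigma\le CF^\sigma$ holds on the support. The correct device is multiplicative rather than pointwise: with the paper's $G_{\varepsilon,\sigma}=(G-\varepsilon F)F^{\sigma-1}$ one has the exact identity $G_{\varepsilon,\sigma}^pF^2=G_{\varepsilon,\sigma'}^p$ with $\sigma'=\sigma+\tfrac{2}{p}$, after which H\"older's inequality on the super-level set produces a factor $a(k,R)^{1-1/\rho}$ times an $L^{\rho}$ norm of $G_{\varepsilon,\sigma',+}^{p}$, and the latter is controlled by rerunning the $L^2$ estimate at the shifted parameters $(\sigma',\rho p)$ --- this is where hypothesis (3) (with $q=\sigma p$) enters, not through a pointwise domination of $F$ by a power of the pinching quantity. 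Your sketch conflates the two mechanisms, and the pointwise version does not close the iteration.

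Second, a term specific to the fully nonlinear setting that your summary omits: $\int\mathcal L(\psi^2v^2)\,d\mu$ does not vanish for a general admissible speed. The divergence theorem leaves $-\int\ddot F^{ij,kl}\nabla_iA_{lk}\nabla_j(\psi^2v^2)\,d\mu$, which must be absorbed using $|\ddot F|\le C F^{-1}$ on $\Gamma^n_0$ (a consequence of one-homogeneity) together with Young's inequality for large $p$. So the nonlinearity enters in one more place than the evolution inequality for the pinching function and \eqref{eq:uniform parabolic}. Neither issue alters the overall strategy, but the first is a concrete step that would fail if carried out as stated.
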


Consider now the $(m+1)$-positive cone
\[
\Gamma^n_{m+1}:=\{z\in\mathbb{R}^n:z_{\sigma(1)}+\dots+z_{\sigma(m+1)}>0\;\;\text{for all}\;\; \sigma\in P_n\}\,.
\]
Note that, since any $z\in\R^n$ may be represented as $z=z_0+\lambda\vec1$ for some $\lambda\in\R$ and $z_0\in \vec1{}^\perp=\{z\in\R^n:z_1+\dots+z_n=0\}$, any convex admissible speed function $f:\Gamma^n\to\R$ satisfies
\[
f(z)=f(z_0+\lambda\vec 1)\ge f(\lambda\vec 1)+Df|_{\lambda\vec 1}\cdot z_0=f(\lambda\vec 1)=\lambda f(\vec 1)=n\lambda=f_0(\lambda\vec 1)\,,
\]
where $f_0(z):=z_1+\dots+z_n$. In particular, $f>0$ on $\overline\Gamma{}^n_{m+1}\setminus\{0\}$ if $m\le n-2$.

Since (by the tensor maximum principle) flows of hypersurfaces by convex speeds preserve cones of the form\footnote{These cones are convex but do not contain the positive cone.} \cite{MR3265960}
\[
\Gamma^n_0:=\{\kappa_1+\dots+\kappa_{m+1}\ge \alpha F\}\,,\;\;\alpha>0\,,
\]
we are able to establish the following localization of the cylindrical estimates \cite[Theorem 1.3]{MR3265960}.

\begin{theorem}[Cylindrical estimates --- convex speeds]\label{thm:cylindrical convex}
Given $m\in\{0,\dots,n-2\}$, let $f:\Gamma^n\to\R$ be a convex admissible speed function with $\Gamma_{m+1}^n\subset \Gamma^n$. If a solution to the flow with speed $F=f(\vec\kappa)$ is properly defined in the spacetime cylinder $\overline B_{\lambda R} \times \left[ 0, \frac{1}{2n} R^2 \right) \subset \mathbb{R}^{n+1} \times \mathbb{R}$ and satisfies
\begin{enumerate}
\item $\displaystyle\inf_{B_{\lambda R} \times \left\{ 0 \right\} \cup \partial B_{\lambda R} \times \left( 0, \frac{1}{2n} R^2 \right)} \frac{\kappa_1 + \cdots + \kappa_{m+1}}{F} \geq \alpha > 0$,
\item $\displaystyle\sup_{B_{\lambda R} \times \left\{ 0 \right\} \cup B_{\lambda R}\setminus B_{(\lambda-\delta)R} \times \left( 0, \frac{1}{2n} R^2 \right)} F \leq \Theta\, R^{-1}$, and
\item $\displaystyle\frac{R^2}{2n} \int_{B_{\lambda R}} F^q d\mu_0 + \frac{1}{\delta^2} \int_0^{\frac{R^2}{2n}}\!\!\!\int_{B_{\lambda R} \backslash B_{(\lambda-\delta)R}} F^q d\mu\, dt \leq \left( \Theta\, R^{-1}\right)^q \left( V \, R\right)^{n+2}$,
\end{enumerate}
then, given any $\varepsilon>0$ and $\vartheta \in \left(0,1\right)$, it satisfies
\begin{equation}\label{eq:cylindrical convex speeds}
\kappa_1+\dots+\kappa_{m+1}-c_mF\geq -\varepsilon\,F - C_\varepsilon R^{-1}\;\;\text{in}\;\; B_{\left( \lambda- \delta\right) \vartheta R} \times \left( 0, \tfrac{1}{2n} R^2 \right)\,,
\end{equation}
where $C_\varepsilon= c\left(n,f,\alpha,q,\varepsilon \right) \Theta \left( \frac{\Theta\, V}{1-\vartheta} \right)^{\frac{2}{q}}$ and $c_m^{-1} := f(\underbrace{0,\dots,0}_{m\text{-times}},1,\dots,1)$.
\end{theorem}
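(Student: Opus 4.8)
The plan is to run the localized Huisken--Stampacchia iteration in exactly the form set up in Section~\ref{S:prelim}, applied to a suitably chosen pinching quantity. Set $G := \kappa_1 + \dots + \kappa_{m+1} - c_m F$; by the normalization $c_m^{-1} = f(0,\dots,0,1,\dots,1)$ and homogeneity, $G \ge 0$ on every $m$-cylindrical ray, and the tensor maximum principle (as recalled in the excerpt, via \cite{MR3265960}) guarantees that the cone $\{\kappa_1 + \dots + \kappa_{m+1} \ge \alpha F\}$ is preserved, so the curvatures stay in a cone $\Gamma^n_0 \Subset \Gamma^n$ where $G$ behaves well. The natural localized quantity to iterate is
\[
w_\sigma := \frac{(-G + \varepsilon F + \tilde C R^{-1})_+}{F^{1-\sigma}}\,,
\]
for small $\sigma > 0$ and a constant $\tilde C$ to be chosen, multiplied by the cut-off $\psi = \zeta \circ X$. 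First I would compute the evolution equation for $G$ under $(\partial_t - \mathcal L)$. The gradient terms arising from differentiating $\kappa_1 + \dots + \kappa_{m+1}$ (a non-smooth function of the second fundamental form, handled by passing to the smooth sums $\lambda$-a.e.\ or by the usual convexity-of-eigenvalue-sums argument) produce a good negative reaction term together with a ``bad'' gradient term of Kato type; the concavity inequality for the convex speed $f$ (Codazzi plus $\ddot f \le 0$... rather $f$ convex so $\ddot f \ge 0$ in the relevant sense, giving the sign that works in the gradient-term estimate, exactly as in \cite{MR3218814,MR3265960}) is what controls this. The upshot should be a differential inequality of the schematic form $(\partial_t - \mathcal L) G \le |A|^2 G + (\text{good negative gradient term})$ away from umbilic-type points, which is precisely the structure that makes the Huisken--Stampacchia machinery run.

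Next I would derive the integral estimate: testing the evolution inequality for $w_\sigma \psi$ against $(w_\sigma \psi - k)_+$ (for levels $k$ above $\sup$ on the parabolic boundary, which is where hypotheses (1) and (2) enter to make the boundary contributions vanish or be absorbed), integrating by parts, and using the Poincar\'e-type inequality (Proposition~\ref{prop:Poincare}) to absorb the $\int u^2 |A|^2$ term into $r^{-1}\int|\nabla u|^2 + (1+r)\int u^2 |\nabla A|^2 H^{-2}$ — here one needs $\overline\Gamma^n_0 \cap \mathrm{Cyl} = \emptyset$, which holds because on the preserved cone $\kappa_1 + \dots + \kappa_{m+1} \ge \alpha F > 0$ excludes the $m'$-cylindrical rays for $m' \le m$, and the positivity $f > 0$ on $\overline\Gamma^n_{m+1}\setminus\{0\}$ (established in the excerpt for $m \le n-2$) handles the rest. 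The cut-off derivative terms $(\partial_t - \mathcal L)\psi = -\dot F^{ij} D_i D_j \zeta$ are controlled using properties 2.\ and 3.\ of $\zeta$ together with uniform parabolicity \eqref{eq:uniform parabolic}; the resulting error integrals are exactly the quantities bounded by hypothesis (3) (this is the role of the $\delta^{-2}$-weighted annular integral and the $V$, $\Theta$ constants). One then feeds the resulting reverse-H\"older / energy inequality, combined with the Michael--Simon Sobolev inequality \eqref{eq:MichaelSimon} (recall $n \ge 3$ here, since this theorem is stated for general $n$ but the convex-speed setting with $m \le n-2$ forces $n \ge 3$ once $m \ge 1$; the $m = 0$, $n = 2$ case reduces to the surface convexity estimate treated in Section~\ref{S:surfaces}), into the De~Giorgi--Nash--Moser-type iteration to obtain a bound on $\int_{A(k)} \cdots$ of Stampacchia type $\varphi(h) \le C(h-k)^{-\alpha}\varphi(k)^\beta$ with $\beta > 1$.

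Finally I would apply Stampacchia's lemma (Lemma~\ref{T:Stampacchia}) to conclude that $w_\sigma \psi$ is bounded above by a level $k_0 + d$ determined by $\varphi(k_0)$, which by hypothesis (3) is controlled by $\Theta(\Theta V/(1-\vartheta))^{2/q}$, and since $\psi \equiv 1$ on the inner ball $B_{(\lambda-\delta)\vartheta R}$ (using properties 1.\ of $\zeta$ with radii adjusted by $\vartheta$) this translates into the pointwise bound $-G + \varepsilon F + \tilde C R^{-1} \le C_\varepsilon R^{-1} F^{1-\sigma} / F^{?}$... more precisely, unwinding the $F^{1-\sigma}$ normalization and absorbing, one gets $G \ge -\varepsilon F - C_\varepsilon R^{-1}$ on $B_{(\lambda-\delta)\vartheta R} \times (0, \frac{1}{2n}R^2)$, which is \eqref{eq:cylindrical convex speeds}. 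The main obstacle I anticipate is the evolution-equation step: getting the Kato-type bad gradient term to be genuinely dominated by the good gradient term uniformly, using only convexity of $f$ and the Codazzi identity, on the full preserved cone $\Gamma^n_0$ (not just near a single cylindrical point) — this is where the one-homogeneity, the structure of $\Gamma^n_{m+1}$, and the precise form of $c_m$ all have to cooperate, and it is the place where the argument genuinely differs from the mean curvature flow case of \cite{MR4609824} and from the non-localized fully nonlinear case of \cite{MR3265960}. A secondary technical point is justifying all the computations with the non-smooth quantity $\kappa_1 + \dots + \kappa_{m+1}$ (and $G$) in the weak/viscosity sense or via approximation, which is standard but must be stated carefully.
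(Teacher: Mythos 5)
Your overall architecture --- a localized Huisken--Stampacchia iteration built from the cut-off functions, the Poincar\'e-type inequality of Proposition \ref{prop:Poincare}, the Michael--Simon Sobolev inequality, and Stampacchia's lemma, with hypotheses (1)--(3) entering exactly as you describe --- is the paper's. The genuine gap is the step you yourself flag as ``the main obstacle'': the evolution inequality for the pinching quantity. You propose to work directly with the non-smooth function $\kappa_1+\dots+\kappa_{m+1}-c_mF$ and to derive, from convexity of $f$ and Codazzi alone, an inequality of the schematic form $(\partial_t-\mathcal{L})G\le|A|^2G+(\text{good negative gradient term})$; but you do not produce this inequality, and the raw eigenvalue sum is not the object for which it can be obtained. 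The $L^p$/Stampacchia machinery requires a genuinely smooth, one-homogeneous $G$ whose evolution carries a quantified negative gradient term $-\gamma G|\nabla A|^2/F^2$ on the set $\{G>\varepsilon F\}$ (so that one may integrate by parts against $E^{p-1}$ and absorb the Kato-type terms), not merely a viscosity/barrier supersolution inequality for a Lipschitz supporting function. The paper sidesteps this entirely: it takes $G$ to be the \emph{smooth}, one-homogeneous, symmetric pinching function constructed in \cite[Section 3]{MR3265960} (non-negative on $\Gamma^n_{m+1}$ and vanishing precisely where $\kappa_1+\dots+\kappa_{m+1}\ge c_mF$), and imports \cite[Proposition 3.4]{MR3265960} wholesale to obtain
\[
(\partial_t-\mathcal{L})G_{\varepsilon,\sigma}\le G_{\varepsilon,\sigma}\left(\sigma\vert A\vert^2_F-\gamma\frac{\vert\nabla A\vert^2}{F^2}+\gamma^{-1}\frac{\vert\nabla G_{\varepsilon,\sigma}\vert^2}{G_{\varepsilon,\sigma}^2}\right),\qquad G_{\varepsilon,\sigma}:=(G-\varepsilon F)F^{\sigma-1},
\]
after which the proof is verbatim the argument of Section \ref{sec:convexity estimate for convex speeds} from \eqref{eq:key estimate for ev Ges} onwards. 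In other words, localization contributes nothing new at the level of the reaction/gradient structure; the new work is confined to the cut-off terms, and the hard analytic input is cited, not re-derived.

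Two smaller points. Your test quantity $w_\sigma=(-G+\varepsilon F+\tilde CR^{-1})_+/F^{1-\sigma}$ has the wrong sign on $\varepsilon F$ (and the additive $\tilde CR^{-1}$ is not needed): the iterated quantity must activate only where the pinching defect \emph{exceeds} $\varepsilon F$. And you omit the one place where the fully nonlinear setting genuinely departs from the mean curvature flow localization of \cite{MR4609824}: the term $\int\mathcal{L}(\psi^2v^2)\,d\mu$ no longer vanishes and must be converted by the divergence theorem into $-\int\ddot F^{ij,kl}\nabla_iA_{lk}\nabla_j(\psi^2v^2)\,d\mu$ and absorbed using $|\ddot F|\le CF^{-1}$ on the preserved cone. (Also, for $n=2$ the paper does not defer to Section \ref{S:surfaces} but replaces Michael--Simon by the Poincar\'e inequality of Proposition \ref{T:Poincare} within the same proof.)
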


We refer to the estimates \eqref{eq:cylindrical convex speeds} as ``cylindrical estimates'' since (under the conditions imposed on $f$) a weakly locally convex hypersurface with $F>0$ satisfying
\[
\kappa_1+\dots+\kappa_{m+1}\ge c_mF
\]
must either satisfy
\[
\kappa_1+\dots+\kappa_m>0\;\;\text{or else}\;\; \kappa_1=\dots=\kappa_m=0\;\;\text{and}\;\;\kappa_{m+1}=\dots=\kappa_n>0
\]
Note that the $m=0$ case provides a local ``umbilic estimate''.

It suffices to prove the estimates in case $R=1$, which is exactly what we shall do. This frees up the symbol $R$ to be used as in the definition of the cut-off functions above. We hope that this shall cause no confusion.

\subsection{Proof of the convexity estimate for flows of hypersurfaces by convex admissible speeds}\label{sec:convexity estimate for convex speeds}

\newtheorem{parabolic}{Lemma}[section]
\newtheorem{bounds}[parabolic]{Lemma}
\newtheorem{evlnGes}[parabolic]{Lemma}
\newtheorem{Stampacchia}[parabolic]{Lemma}




Given a solution $X:M^n\times[0,T)\to\mathbb{R}^{n+1}$ to such a flow, consider the curvature pinching function $G:M^n\times[0,T)\to\mathbb{R}$ introduced in \cite[Section 3]{MR3218814}. This function is given by a smooth, non-negative, symmetric, one-homogeneous function of the principal curvatures, vanishing if and only if $\kappa_1\ge 0$. Consider also a cut-off function $\psi=\zeta\circ X$ as prescribed above.

Given $\varepsilon>0$, $\sigma\in(0,1)$ and $p>10$, define
\[
G_{\varepsilon,\sigma}:=(G-\varepsilon F)F^{\sigma-1}\;\;\text{and}\;\; v :=(G_{\varepsilon,\sigma})_+^{\frac{p}{2}}\,.
\]
Our goal is to establish a good $L^2$ estimate for $\psi v$ for some (sufficiently large) $p$, which we will then be able to bootstrap to an $L^\infty$ estimate by Stampacchia iteration. To that end, we establish the following inequality.
\begin{lemma}
If $p\ge L=L(n,f,\Gamma^n_0,\varepsilon)$, then
\begin{equation}\label{eq:evolve psi v}
\frac{\left(\partial_t-\mathcal{L} \right) \psi^2 v^2}{\psi^2 v^2}\le \frac{150n\Lambda\chi_{B_R\setminus B_r}}{(R-r)^2\psi}+\sigma p\Lambda\vert A\vert^2-2\gamma p\frac{\vert \nabla A\vert^2}{F^2}-3\Lambda^{-1}\frac{\vert\nabla v\vert^2}{v^2}
\end{equation}
at points where $\psi v>0$, for some $C=C(n,f,\Gamma^n_0,\varepsilon)<\infty$ and $\gamma=\gamma(n,f,\Gamma^n_0,\varepsilon)>0$.
\end{lemma}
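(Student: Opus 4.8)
The plan is to compute the evolution of $\psi^2 v^2$ directly from the evolution equations for $\psi$, for $F$, and for $G$, dividing through by $\psi^2 v^2$ to obtain a clean differential inequality. I would proceed in three stages. First, recall from \eqref{E:evlnpsi} that $(\partial_t-\mathcal{L})\psi = -\dot F^{ij}D_iD_j\zeta$; combining the cut-off bounds (property 3) with the uniform parabolicity \eqref{eq:uniform parabolic} gives $(\partial_t-\mathcal{L})\psi \le n\Lambda\sqrt{10}\,(R-r)^{-1}\chi_{B_R\setminus B_r}$, and hence, using $(\partial_t-\mathcal{L})\psi^2 = 2\psi(\partial_t-\mathcal{L})\psi - 2\vert\nabla\psi\vert^2_F$ together with property 2 to absorb the $\psi$ in the denominator, one gets a bound of the form $(\partial_t-\mathcal{L})\psi^2/\psi^2 \le c\,n\Lambda(R-r)^{-2}\chi_{B_R\setminus B_r}/\psi - 2\vert\nabla\psi\vert^2_F/\psi^2$. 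The numerical constant should come out below $150n\Lambda$ with room to spare.

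Second, I would compute $(\partial_t-\mathcal{L})v^2$. Since $v^2 = (G_{\varepsilon,\sigma})_+^p$ with $G_{\varepsilon,\sigma} = (G-\varepsilon F)F^{\sigma-1}$, this is a composition of one-homogeneous curvature functions, and at points where $G_{\varepsilon,\sigma}>0$ (equivalently $G>\varepsilon F$, so the favourable gradient and reaction terms from Lemma~\ref{T:bounds}-type estimates in \cite{MR3218814} are available) the chain rule gives
\[
\frac{(\partial_t-\mathcal{L})v^2}{v^2} = p\,\frac{(\partial_t-\mathcal{L})G_{\varepsilon,\sigma}}{G_{\varepsilon,\sigma}} - p(p-1)\frac{\vert\nabla G_{\varepsilon,\sigma}\vert^2}{G_{\varepsilon,\sigma}^2}\,.
\]
The reaction term in $(\partial_t-\mathcal{L})G_{\varepsilon,\sigma}$ is controlled using the inequality $(F\dot G^{kl}-G\dot F^{kl})h_k^{\ m}h_{ml}\le -\gamma F\vert A\vert^2$, which produces the term $-2\gamma p\,\vert A\vert^2\cdot(\text{something})$; after dividing by $G_{\varepsilon,\sigma}$ and using $G\le c_0 F$ this becomes a term of the right sign of size comparable to $p\vert A\vert^2$, but with the \emph{wrong} sign compared to what we want. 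The point is that the genuinely bad term is $\sigma p\Lambda\vert A\vert^2$: the $F^{\sigma-1}$ factor, being non-homogeneous in a way that interacts with $(\partial_t-\mathcal{L})F = \dot F^{kl}\nabla_k\nabla_l F + \ddot F^{pq,rs}\nabla A\ast\nabla A + \dot F^{kl}h_k^{\ m}h_{ml}F$ (the last term giving $\vert A\vert^2_F F^\sigma \le \Lambda\vert A\vert^2 F^\sigma$), contributes the $+\sigma p\Lambda\vert A\vert^2$ appearing on the right-hand side. The gradient terms require the concavity inequality $(\dot G^{kl}\ddot F^{pq,rs}-\dot F^{kl}\ddot G^{pq,rs})\nabla_k A_{pq}\nabla_l A_{rs}\le -c_2^{-1}\vert\nabla A\vert^2/F$ together with Cauchy--Schwarz to dominate the cross-terms between $\nabla G$ and $\nabla F$; choosing $p\ge L(n,f,\Gamma^n_0,\varepsilon)$ large enough is exactly what makes the $-p(p-1)\vert\nabla G_{\varepsilon,\sigma}\vert^2/G_{\varepsilon,\sigma}^2$ term and the favourable $-\vert\nabla A\vert^2/F$ term together swallow all unfavourable gradient contributions and still leave $-2\gamma p\,\vert\nabla A\vert^2/F^2$ as well as a spare $-3\Lambda^{-1}\vert\nabla v\vert^2/v^2$.

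Third, I would assemble the pieces: $(\partial_t-\mathcal{L})(\psi^2v^2) = v^2(\partial_t-\mathcal{L})\psi^2 + \psi^2(\partial_t-\mathcal{L})v^2 - 2\nabla\psi^2\cdot\nabla v^2$ in the $\dot F$-inner product, and the cross term $-2\langle\nabla\psi^2,\nabla v^2\rangle_F$ is handled by Cauchy--Schwarz: $-2\langle\nabla\psi^2,\nabla v^2\rangle_F \le 2\vert\nabla\psi^2\vert_F^2/\psi^2 \cdot (\text{const}) + \tfrac12\psi^2\vert\nabla v^2\vert_F^2/v^2$, arranging the split so that half of the $-2\vert\nabla\psi\vert_F^2/\psi^2$ from stage one and part of the $\vert\nabla v\vert^2/v^2$ surplus from stage two absorb it, after converting $\vert\nabla\psi^2\vert_F^2 = 4\psi^2\vert\nabla\psi\vert_F^2$ and using $\vert\nabla\psi\vert^2\le 10(R-r)^{-2}\psi$. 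Dividing the final inequality by $\psi^2v^2$ yields \eqref{eq:evolve psi v}.

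\textbf{Main obstacle.} The principal difficulty is the bookkeeping of the gradient terms in stage two: one must simultaneously (a) use the concavity of $f$ relative to $G$ to generate a good $-\vert\nabla A\vert^2/F$ term, (b) reserve enough of the $-p(p-1)\vert\nabla G_{\varepsilon,\sigma}\vert^2/G_{\varepsilon,\sigma}^2$ term to beat the Cauchy--Schwarz cross-terms (both the internal ones coming from expanding $\nabla G_{\varepsilon,\sigma}$ in terms of $\nabla G$ and $\nabla F$, and the cutoff cross-term from stage three), and (c) still retain both $-2\gamma p\,\vert\nabla A\vert^2/F^2$ and $-3\Lambda^{-1}\vert\nabla v\vert^2/v^2$ at the end — all while tracking that the threshold $L$ depends only on $n$, $f$, $\Gamma^n_0$, and $\varepsilon$ (in particular not on $\sigma$ or $p$). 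This is precisely the localized analogue of the corresponding computation in \cite{MR3218814} combined with the cutoff technology of \cite{MR4609824}, and the only genuinely new feature is the interplay of the cutoff gradient term with the curvature gradient terms, which is why $p$ must be taken large \emph{before} $\sigma$ is chosen small.
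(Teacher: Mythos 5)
Your proposal follows essentially the same route as the paper: the evolution equation \eqref{eq:evlnGes} for $G_{\varepsilon,\sigma}$, the concavity estimate \eqref{eq:Q estimate convex case} from \cite{MR3218814} supplying the good $-\gamma p\,\vert\nabla A\vert^2/F^2$ term, and the product-rule decomposition of $(\partial_t-\mathcal{L})(\psi^2v^2)$ with Young's inequality, the cut-off properties and \eqref{eq:uniform parabolic} absorbing the cross terms (the paper itself leaves this last absorption as "straightforward"). One small correction: there is no $(F\dot G^{kl}-G\dot F^{kl})h_k^{\ m}h_{ml}$ reaction term to control here --- since $G$ and $F$ are both one-homogeneous, their reaction terms cancel in the quotient $G/F$, so the \emph{only} zeroth-order term in \eqref{eq:evlnGes} is $\sigma G_{\varepsilon,\sigma}\vert A\vert^2_F$, which is precisely the source of $\sigma p\Lambda\vert A\vert^2$ (the $h^2$ inequality you quote is used in the elliptic/integrated part of the global argument of \cite{MR3218814}, not in this lemma); your final accounting of terms is nonetheless correct.
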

\begin{proof}
We first recall (as in \cite[Lemma 3.4]{MR3218814}, notwithstanding the typo there in the second last term) that
\begin{equation}\label{eq:evlnGes}
\begin{split}
\left( \partial_t - \mathcal{L} \right) G_{\varepsilon, \sigma} ={}& F^{\sigma-1} \left(\dot G^{kl} \ddot F^{pq, rs} - \dot F^{kl} \ddot G^{pq, rs} \right) \nabla_k A_{pq} \nabla_l A_{rs}\\
{}&  + 2\left( 1-\sigma \right) \left< \nabla G_{\varepsilon, \sigma}, \frac{\nabla F}{F} \right>_F - \sigma(1 - \sigma)G_{\varepsilon, \sigma}\frac{\left| \nabla F\right|_F^2}{F^2} + \sigma G_{\varepsilon, \sigma} \left| A \right|^2_F \mbox{.}
\end{split}
\end{equation}
In fact, this equation holds when $G$ is given by \emph{any} smooth, symmetric, one-homogeneous function of the principal curvatures. A key point of the construction of the particular $G$ is to ensure that the first term on the right of \eqref{eq:evlnGes} is useful (the negative penultimate term will simply be discarded as its coefficient is too small to be of use). In the case at hand, \cite[Lemma 3.3]{MR3218814} guarantees that there is a constant $\gamma=\gamma(n,f,\Gamma^n_0,\varepsilon)>0$ such that, wherever\footnote{This is the main point at which the analysis of flows of surfaces 
by general speeds differs from flows by convex speeds.} $G_{\varepsilon,\sigma}>0$,
\begin{equation}\label{eq:Q estimate convex case}
F^{\sigma-1} \left(\dot G^{kl} \ddot F^{pq, rs} - \dot F^{kl} \ddot G^{pq, rs} \right) \nabla_k A_{pq} \nabla_l A_{rs}\le -\gamma G_{\varepsilon,\sigma}\frac{\vert\nabla A\vert^2}{F^2} \mbox{.}
\end{equation}
Discarding the penultimate term of \eqref{eq:evlnGes} and applying Young's inequality to the gradient term, we may thus estimate
\begin{equation}\label{eq:key estimate for ev Ges}
(\partial_t-\mathcal{L})G_{\varepsilon,\sigma}\le G_{\varepsilon,\sigma}\left(\sigma\vert A\vert^2_F-\gamma\frac{\vert\nabla A\vert^2}{F^2}+\gamma^{-1}\frac{\vert\nabla G_{\varepsilon,\sigma}\vert^2}{G_{\varepsilon,\sigma}^2}\right)
\end{equation}
wherever $G_{\varepsilon,\sigma}>0$ for some (potentially smaller than before) $\gamma=\gamma(n,f,\Gamma^n_0,\varepsilon)>0$.

Given that (at points where $\psi v>0$)
\begin{equation*}
\frac{\left( \partial_t - \mathcal{L} \right) \psi^2 v^2}{\psi^2 v^2} = 2 \frac{\left( \partial_t - \mathcal{L} \right) \psi}{\psi} - 2 \frac{\left| \nabla \psi\right|_F^2}{\psi^2} + \frac{p\left( \partial_t - \mathcal{L}\right) G_{\varepsilon, \sigma}}{G_{\varepsilon, \sigma}} - 4\frac{p-1}{p} \frac{\left| \nabla v\right|_F^2}{v^2} - 8 \left< \frac{\nabla \psi}{\psi}, \frac{\nabla v}{v}\right>_F\,,
\end{equation*}
the claim follows fairly straightforwardly from the key properties of the cut-off function and careful absorption using Young's inequality and \eqref{eq:uniform parabolic} (cf. \cite{MR3218814} and \cite{MR4609824}).
\end{proof}
Recalling that (see, e.g., \cite[Lemma 2.3]{MR3218814})
\[
\frac{d}{dt}d\mu=-FHd\mu\,,
\]
consider now
\begin{align*}
\frac{d}{dt}\int\psi^2v^2\,d\mu={}&\int\left(\frac{(\partial_t-\mathcal{L})\psi^2v^2}{\psi^2v^2}-FH\right)\psi^2v^2\,d\mu+\int \mathcal{L}(\psi^2v^2)\,d\mu\,.
\end{align*}
In contrast to the mean curvature flow, the last integral on the right hand side may not vanish for flows by a general admissible speed. Nonetheless, the divergence theorem yields (by an abuse of notation)
\begin{multline*}
\int\mathcal{L}(\psi^2v^2)\,d\mu={}\int\dot F^{ij}\nabla_i\nabla_j(\psi^2v^2)\,d\mu\\
={}-\int\ddot F^{ij,kl}\nabla_iA_{lk}\nabla_j(\psi^2v^2)\,d\mu
\le{}C\int\frac{\vert\nabla A\vert}{F}\left\vert\frac{\nabla\psi}{\psi}+\frac{\nabla v}{v}\right\vert\psi^2v^2\,d\mu\,,
\end{multline*}
where $C=C(n,f,\Gamma^n_0)$, since homogeneity of $F$ implies that $\ddot F$ is comparable to $F^{-1}$ in $\Gamma^n_0$ (note that convexity of $F$ is not used here). These terms are easily absorbed for large $p$ using Young's inequality, yielding
\begin{align*}
\frac{d}{dt}\int\psi^2v^2\,d\mu{}&+\Lambda^{-1}\int H^2\psi^2v^2\,d\mu\\
\le{}&\frac{200\, n\, \Lambda}{(R-r)^2}\int_{B_R\setminus B_r} v^2\,d\mu+\int\left(\sigma p\Lambda\vert A\vert^2-2\Lambda^{-1}\frac{\vert\nabla v\vert^2}{v^2}-\gamma p\frac{\vert\nabla A\vert^2}{F^2}\right)\psi^2v^2\,d\mu\,.
\end{align*}
Applying the Poincar\'e-type inequality (Theorem \ref{prop:Poincare}) with $u=\psi \, v$ (cf. \cite{MR4609824}) now yields
\begin{align}\label{eq:d/dt L2}
\frac{d}{dt}\int\psi^2v^2\,d\mu+\Lambda^{-1}\int\left(\vert\nabla v\vert^2+ v^2H^2\right)\psi^2\,d\mu\le{}&\frac{200\,n\, \Lambda}{(R-r)^2}\int_{B_R\setminus B_r}v^2\,d\mu
\end{align}
for some $C=C(n,f,\Gamma^n_0)<\infty$, so long as $p\ge L=L(n,f,\Gamma^n_0,\varepsilon)$ and $\sigma \, p^{\frac{1}{2}}\le \ell=\ell(n,f,\Gamma^n_0,\varepsilon)$.

If we choose $R=\lambda$ and $r=\lambda-\delta$, then discarding the two good terms on the left and integrating with respect to time yields
\[
\sup_{t\in (0,\frac{1}{2n})}\int_{B_{\lambda-\delta}}v^2\,d\mu\le\int_{B_\lambda}v^2\,d\mu_0+200\, n\, \Lambda\delta^{-2}\int\!\!\!\int_{B_\lambda\setminus B_{\lambda-\delta}}v^2\,d\mu\,dt\,.
\]
Integrating again with respect to $t$ yields the following $L^2$ estimate.
\begin{lemma}[$L^2$-estimate]\label{lem:L2 convex speeds}
If $p\ge L=L(n,f,\Gamma^n_0,\varepsilon)$ and $\sigma \, p^{\frac{1}{2}}\le \ell=\ell(n,f,\Gamma^n_0,\varepsilon)$, then
\begin{equation}\label{eq:L2 estimate}
\int\!\!\!\int_{B_{\lambda-\delta}}v^2\,d\mu\,dt\le\frac{1}{2n}\int_{B_\lambda}v^2\,d\mu_0+100\Lambda\delta^{-2}\int\!\!\!\int_{B_\lambda\setminus B_{\lambda-\delta}}v^2\,d\mu\,dt\,.
\end{equation}
\end{lemma}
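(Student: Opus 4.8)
The plan is to establish the differential inequality \eqref{eq:d/dt L2} for $\int \psi^2 v^2\,d\mu$ by combining the pointwise estimate \eqref{eq:evolve psi v} with the evolution of the measure, the divergence-theorem correction term, and the Poincar\'e-type inequality, and then to choose the cut-off parameters $R=\lambda$, $r=\lambda-\delta$ and integrate in time twice. Since all of the analytic work culminating in \eqref{eq:d/dt L2} has already been carried out in the excerpt, the proof of Lemma \ref{lem:L2 convex speeds} itself is a short bookkeeping argument: I would simply track what happens to \eqref{eq:d/dt L2} under two successive integrations in $t$ over $[0,\tfrac{1}{2n}]$.

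First I would record the consequence of discarding the two non-negative terms $\Lambda^{-1}\int(\vert\nabla v\vert^2+v^2H^2)\psi^2\,d\mu$ on the left of \eqref{eq:d/dt L2}, leaving
\[
\frac{d}{dt}\int\psi^2v^2\,d\mu\le\frac{200\,n\,\Lambda}{(R-r)^2}\int_{B_R\setminus B_r}v^2\,d\mu\,.
\]
With the stated choice $R=\lambda$, $r=\lambda-\delta$ (so $(R-r)^2=\delta^2$) and using $\psi\le 1$, $\psi=1$ on $B_{\lambda-\delta}$, integrating from $0$ to $t\le\tfrac{1}{2n}$ and taking the supremum over $t$ gives exactly the intermediate bound stated in the excerpt,
\[
\sup_{t\in(0,\frac{1}{2n})}\int_{B_{\lambda-\delta}}v^2\,d\mu\le\int_{B_\lambda}v^2\,d\mu_0+\frac{200\,n\,\Lambda}{\delta^2}\int\!\!\!\int_{B_\lambda\setminus B_{\lambda-\delta}}v^2\,d\mu\,dt\,.
\]
Integrating this inequality once more over $t\in(0,\tfrac{1}{2n})$ multiplies the first term on the right by $\tfrac{1}{2n}$ and the second by $\tfrac{1}{2n}$ as well; absorbing the constant $200\,n\,\Lambda\cdot\tfrac{1}{2n}=100\,\Lambda$ yields \eqref{eq:L2 estimate}. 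The hypotheses $p\ge L$ and $\sigma\,p^{1/2}\le\ell$ are inherited verbatim from \eqref{eq:d/dt L2}, since those are precisely the conditions under which the absorption producing \eqref{eq:d/dt L2} is valid.

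There is essentially no obstacle here; the only points requiring a little care are the elementary ones: that $\psi=\zeta\circ X$ vanishes outside $B_R=B_\lambda$ and equals $1$ on $B_r=B_{\lambda-\delta}$ (property 1 of the cut-off), so that $\psi^2v^2\ge v^2\chi_{B_{\lambda-\delta}}$ on the left and $v^2\psi^2\le v^2\chi_{B_\lambda\setminus B_{\lambda-\delta}}$ on the spatial annulus where the gradient of $\zeta$ is supported; and that the spacetime cylinder is exactly $[0,\tfrac{1}{2n})$ in the rescaled ($R=1$) setting, so the two time integrations each contribute a factor $\tfrac{1}{2n}$. One should also note that $v$ is only Lipschitz (it is a power of a truncation), but this is harmless since $v^2=(G_{\varepsilon,\sigma})_+^p$ with $p>10$ is $C^1$, and all the displayed integral identities were already justified in deriving \eqref{eq:d/dt L2}.
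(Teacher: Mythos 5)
Your proposal is correct and follows essentially the same route as the paper: discard the two non-negative terms from \eqref{eq:d/dt L2}, set $R=\lambda$, $r=\lambda-\delta$, integrate once to obtain the supremum bound, then integrate once more over $t\in(0,\tfrac{1}{2n})$ and absorb $200\,n\,\Lambda\cdot\tfrac{1}{2n}=100\,\Lambda$. The paper states this step tersely ("integrating again with respect to $t$ yields the following $L^2$ estimate"); you have filled in exactly the bookkeeping the authors intended.
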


Now we apply Stampacchia's lemma (very similarly as in \cite{MR4609824}) to convert \eqref{eq:L2 estimate} into an $L^\infty$ estimate. To this end we consider, for each $k> k_0 := \Theta^\sigma \geq \sup_{B_\lambda \backslash B_{\lambda - \delta}} G_{\varepsilon, \sigma}$ and $R< \lambda - \delta$,
\[
v_k^2 := \left( G_{\varepsilon, \sigma}- k \right)_+^p \;\mbox{ and }\; A_{k, R}\left( t\right) := \left\{ \left( x, t\right) \in X^{-1}\left( B_R\right) : v_k\left( x, t\right) > 0 \right\}\,.
\]
Set also
\[
u\left( k, R\right) := \int\!\!\!\int_{A_{k, R}} v_k^2 d\mu\, dt \;\mbox{ and }\; a\left( k, R\right) := \int\!\!\!\int_{A_k, R} d\mu\, dt \mbox{.}
\]
Observe that, for any $h\geq k >0$ and any $0<r\leq R \leq \lambda -\delta$,
\[
\left( h-k\right)^p a\left( h, r\right) \leq u\left( k, r\right) \mbox{.}
\]
In order to estimate $u\left( k, r\right)$, we observe first that the same arguments which led to \eqref{eq:d/dt L2} yield  (cf. \cite{MR4609824})
\begin{multline}\label{E:vk}
\frac{d}{dt} \int \psi^2 v_k^2 d\mu + \Lambda^{-1}\int\left(\vert\nabla v_k\vert^2+v_k^2H^2\right)\psi^2\,d\mu\\
\leq \frac{C}{\left( R-r\right)^2} \int_{A_{k, R}} v_k^2 d\mu + \sigma p \Lambda\int_{A_{k, R}} G_{\varepsilon, \sigma}^p \left| A \right|^2 d\mu
\end{multline}
for some $C=C(n,f,\Gamma^n_0)<\infty$, so long as $p\ge L=L(n,f,\Gamma^n_0,\varepsilon)$ and $\sigma p^{-\frac{1}{2}}\ge \ell=\ell(n,f,\Gamma^n_0,\varepsilon)$. 
Applying \eqref{eq:MichaelSimon} when $n\ge 3$ (see the alternative argument at the conclusion of this proof for the $n=2$ case) with $u=\psi\, v_k$ yields
\begin{equation*} 
\frac{d}{dt} \int \psi^2 v_k^2 d\mu + C^{-1}\left( \int_{A_{k, r}}  v_k^{2^*} d\mu \right)^{\frac{2}{2*}} 
\leq \frac{C}{\left(R-r\right)^2} \int_{A_{k, R}} v_k^2 d\mu + \sigma p \Lambda \int_{A_{k, R}} G_{\varepsilon, \sigma}^p \left| A \right|^2 d\mu\,,
\end{equation*}
where $\frac{1}{2^\ast}:=\frac{1}{2}-\frac{1}{n}$ and $C=C(n,f,\Gamma^n_0)<\infty$.

Integrating with respect to $t$ then yields
\begin{multline*}
\sup_{t\in \left( 0, \frac{1}{2n} R^2 \right)} \int_{A_{k, r}} v_k^2 d\mu + \int \left(  \int_{A_{k, r}}  v_k^{2^*} d\mu \right)^{\frac{2}{2^*}} dt\\
\leq C\left(\frac{1}{\left( R - r\right)^2} \int\!\!\!\int_{A_{k, R}} v_k^2 d\mu \, dt + \sigma p\int_{A_{k, R}} G_{\varepsilon, \sigma}^p \left| A \right|^2 d\mu \, dt\right) \mbox{,}
\end{multline*}
where $C=C(n,f,\Gamma^n_0)$. Estimating $\vert A\vert^2\le C(n,f,\Gamma^n_0)F^2$ and using interpolation and Young's inequalities exactly as in \cite{MR4609824}, we deduce that
\[
\left(\int\!\!\!\int_{A_{k, r}} v_k^{2q^\ast} d\mu\, dt \right)^{\frac{1}{q^\ast}} \leq C\left(\frac{1}{\left( R - r\right)^2}\int\!\!\!\int_{A_{k, R}} v_k^2 d\mu \, dt + \sigma p\int_{A_{k, R}} G_{\varepsilon, \sigma}^pF^2 d\mu \, dt\right)\mbox{,}
\]
where $q_\ast:=2-\frac{2}{2^\ast}$, which is just $\frac{n+2}{n}$ when $n\ge 3$. (We shall see below that the exponent in the corresponding estimate can be made arbitrarily close to this number when $n=2$). Next note that $G_{\varepsilon,\sigma}^pF^2=G_{\varepsilon,\sigma'}^p$ where $\sigma':=\sigma + \frac{2}{p}$. So H\"older's inequality gives
\[
\int\!\!\!\int_{A_{k, R}}G_{\varepsilon, \sigma}^pF^2\,d\mu\, dt \leq a\left( k, R \right)^{1-\frac{1}{\rho}} \left(\int\!\!\!\int_{B_{\lambda-\delta}} G_{\varepsilon, \sigma', +}^{p \, \rho} d\mu \, dt \right)^{\frac{1}{\rho}}
\]
for any $\rho\geq 1$ (to be determined). Note that we may choose $\ell$ smaller, depending now also on $\rho$, so that \eqref{eq:L2 estimate} applies with $\sigma$ replaced by $\sigma'$ and $p$ replaced by $\rho \, p$.

Similarly, we may estimate, for any $k>k_0$ and $R< \lambda -\delta$,
\[
u\left( k, R\right) \leq a\left( k, R \right)^{1- \frac{1}{\rho}} \left(\int\!\!\!\int_{B_{\lambda-\delta}} G_{\varepsilon, \sigma', +}^{p \, \rho} d\mu \, dt \right)^{\frac{1}{\rho}} \mbox{.}
\]
Finally, we estimate
\[
u\left( k, r\right) \leq a\left( k, r\right)^{\frac{2}{n+2}} \left(\int\!\!\!\int_{A_{k, r}} v_k^{\frac{2\left( n+2\right)}{n}} d\mu\, dt \right)^{\frac{n}{n+2}} \mbox{.}
\]
Using these and the $L^2$ estimate \eqref{eq:L2 estimate} we obtain similarly as in \cite{MR4609824},
\[
\left( h-k\right)^p \left( R-r\right)^2 a\left( h, r\right) \leq C\, a\left( k, R\right)^{\gamma} \left[ 1 + \sigma p\, \Theta^2\left( \lambda - \delta \right)^2 \right] \Theta^{\sigma\, p \left( 1 - \frac{1}{\rho}\right) }\Omega^{\frac{\sigma p}{\rho}} \mbox{,}
\]
provided $p>L\left( n, \alpha, \varepsilon, \rho \right)$ and $\sigma \leq \ell\left( n, \alpha, \varepsilon, \rho\right) p^{-\frac{1}{2}}$, where $C=C\left(n,f,\Gamma^n_0,\rho\right)$, $\gamma := 1 + \frac{2}{n+2} - \frac{1}{\rho}$ if $n\ge 3$ (or $\gamma := 1 + \frac{2}{n+2} - \frac{1}{\rho}+\frac{2}{2^\ast}$ if $n=2$), and
\[
\Omega := \left(\frac{1}{2n}\int F^{\sigma p} d\mu_0 + \frac{100\Lambda}{\delta^2} \int \int_{B_\lambda \backslash B_{\lambda-\delta}}F^{\sigma p} d\mu\, dt \right)^{\frac{1}{\sigma p}} \mbox{.}
\]

We now fix $\rho=\rho\left( n \right) > 1 + \frac{n}{2}$, which ensures $\gamma = \gamma\left( n \right) >1$, and choose $p=p\left( n, \alpha, \varepsilon, q \right) \geq L$ and $\sigma = \sigma\left( n, \alpha, \varepsilon, q \right) \leq \ell \, p^{-\frac {1}{2}}$ such that $\sigma p = q$.  Lemma \ref{T:Stampacchia} gives
\[
a\left( k_0 + d, \vartheta R_0 \right) = 0 \mbox{,}
\]
where $R_0:= \lambda - \delta$ and 
\[
d^p:= \frac{2^{\frac{\left( p+2\right) \gamma}{\gamma-1}} c\left[ 1 + \sigma p\, \Theta^2\left( \lambda - \delta \right)^2 \right] \Theta^{\sigma p \left( 1 - \frac{1}{\rho}\right)}\Omega^{\sigma \, p}{\rho} a\left( k_0, R_0\right)^{\gamma-1}}{\left( 1 - \vartheta\right)^2 \left(\lambda-\delta\right)^2} \mbox{.}
\]
Using \eqref{eq:L2 estimate} we may estimate, as in \cite{MR4609824},
\[
a\left( k_0, R_0\right) \leq k_0^{-p} \Omega^{\sigma p}\,.
\]
Since $k_0= \Theta^\sigma$ and $\sigma p = q$ we obtain, as in \cite{MR4609824}, 
\[
G\leq 2\varepsilon F + c\left( n, \alpha, q, \varepsilon \right) \Theta \left( \frac{\Theta \, V}{1- \vartheta} \right)^{\frac{2}{q}}
\]
in $B_{\vartheta\left( \lambda - \delta \right)}$.  This completes the proof of Theorem \ref{thm:convexityc} in the case $n\ge 3$.

In the case $n=2$, we apply the Poincar\'e inequality \ref{T:Poincare} (instead of the Sobolev inequality) to \eqref{E:vk} to obtain
\begin{equation*} 
\frac{d}{dt} \int \psi^2 v_k^2 d\mu + \frac{1}{Ca(k,R)^{\frac{1}{s}}}\left(\int_{A_{k, r}}v_k^{2s} d\mu \right)^{\frac{1}{s}} 
\leq \frac{C}{\left(R-r\right)^2} \int_{A_{k, R}} v_k^2 d\mu + \sigma p \Lambda \int_{A_{k, R}} G_{\varepsilon, \sigma}^p \left| A \right|^2 d\mu
\end{equation*}
for any choice of $s>1$ (say $s=2$ for concreteness), where $C=C(n,f,\Gamma^n_0)$. But since the $L^2$ estimate and the hypotheses guarantee that
\[
a(k,R)\le\frac{u(0,R)}{k^p}\le \frac{\Omega^{\sigma p}}{k^p}=\frac{\Omega^{q}}{k^p}\le \frac{100\Lambda\Theta^qV^{n+2}}{k^p}\le 1
\]
for $k\ge k(n,f,\Gamma^n_0,q,\Theta,V)$ sufficiently large, we may now proceed as for $n\ge 3$ to complete the proof in the case $n=2$.
\hspace*{\fill}$\Box$

\subsection{Proof of the cylindrical (and umbilic) estimates for flows of hypersurfaces by convex admissible speeds}

We next consider, for any $m=\{0,\dots,n-2\}$, the flow by any convex admissible speed function $f:\Gamma^2\to\mathbb{R}$ with $\Gamma^n_{m+1}\subset\Gamma^n$. 
Along any solution $X:M^n\times[0,T)\to\mathbb{R}^{n+1}$ to this flow, we define the pinching function $G:M^n\times[0,T)\to\mathbb{R}^{n+1}$ as in \cite[Section 3]{MR3265960}. This function is given by a smooth, one-homogeneous, symmetric function of the principal curvatures which is non-negative in $\Gamma^n_{m+1}$ and vanishes precisely when $\kappa_1+\dots+\kappa_{m+1}\ge c_mF$, where $c_m$ is as defined in the statement of Theorem \ref{thm:cylindrical convex}.
Moreover, if we set $G_{\varepsilon,\sigma}:=(G-\varepsilon F)F^{\sigma-1}$, then due to \cite[Proposition 3.4]{MR3265960}, we may estimate
\[
(\partial_t-\mathcal{L})G_{\varepsilon,\sigma}\le G_{\varepsilon,\sigma}\left(\sigma\vert A\vert^2_F-\gamma\frac{\vert\nabla A\vert^2}{F^2}+\gamma^{-1}\frac{\vert\nabla G_{\varepsilon,\sigma}\vert^2}{G_{\varepsilon,\sigma}^2}\right)
\]
for some $\gamma=\gamma(n,f,\alpha,\varepsilon)>0$ wherever $G_{\varepsilon,\sigma}>0$. 
We may thus proceed exactly as in Section \ref{sec:convexity estimate for convex speeds} (from \eqref{eq:key estimate for ev Ges} onwards) to establish Theorem \ref{thm:cylindrical convex}.

\section{Surfaces evolving by general speeds} \label{S:surfaces}

We next consider the deformation of surfaces by general admissible speed functions $f:\Gamma^n\to\mathbb{R}$. Such flows preserve any convex cone of curvatures, and hence remain uniformly parabolic throughout the evolution \cite[Corollary 2.3]{MR3299822} (cf. \cite{MR2729317}). 



We will establish the following local umbilic and convexity estimates for general admissible flows of surfaces.

\begin{theorem}[Umbilic estimate --- surfaces]\label{thm:umbilic surfaces}
Let $f:\Gamma^2_+\to\R$ be an admissible speed function. If a solution to the flow with speed $F=f(\kappa_1,\kappa_2)$ is properly defined in the spacetime cylinder $\overline B_{\lambda R} \times \left[ 0, \frac{1}{4} R^2 \right) \subset \mathbb{R}^{3} \times \mathbb{R}$ and satisfies
\begin{enumerate}
\item $\displaystyle\inf_{B_{\lambda R} \times \left\{ 0 \right\} \cup \partial B_{\lambda R} \times \left( 0, \frac{1}{4} R^2 \right)} \frac{\kappa_1}{\kappa_2} \geq \alpha > 0$,
\item $\displaystyle\sup_{B_{\lambda R} \times \left\{ 0 \right\} \cup B_{\lambda R}\setminus B_{(\lambda-\delta)R} \times \left( 0, \frac{1}{4} R^2 \right)} F \leq \Theta\, R^{-1}$, and
\item $\displaystyle\frac{R^2}{4} \int_{B_{\lambda R}} F^q d\mu_0 + \frac{1}{\delta^2} \int_0^{\frac{R^2}{4}} \int_{B_{\lambda R} \backslash B_{(\lambda-\delta)R}} F^q d\mu\, dt \leq \left( \Theta\, R^{-1}\right)^q \left( V \, R\right)^{4}$,
\end{enumerate}
then, given any $\varepsilon>0$ and $\vartheta \in \left(0,1\right)$, it satisfies
\begin{equation} \label{eq:umbilic surfaces}
\kappa_2-\kappa_1\le \varepsilon\,F + C_\varepsilon R^{-1}\;\;\text{in}\;\; B_{\left( \lambda- \delta\right) \vartheta R} \times \left( 0, \tfrac{1}{4} R^2 \right)\,,
\end{equation}
where $C_\varepsilon= c\left(f,\alpha,q,\varepsilon \right) \Theta \left( \frac{\Theta\, V}{1-\vartheta} \right)^{\frac{2}{q}}$.
\end{theorem}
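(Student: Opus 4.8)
The plan is to follow the template established in Section \ref{sec:convexity estimate for convex speeds} verbatim, replacing the convex-speed pinching function with the surface pinching function from \cite{MR2729317, MR3299822}. First I would introduce the pinching quantity $G := \kappa_2 - \kappa_1 = \sqrt{H^2 - 4K}$ (or a suitable smooth symmetric one-homogeneous function thereof vanishing exactly when $\kappa_1 = \kappa_2$), and set $G_{\varepsilon,\sigma} := (G - \varepsilon F)F^{\sigma-1}$ and $v := (G_{\varepsilon,\sigma})_+^{p/2}$, with $\psi = \zeta \circ X$ the cut-off function. The key input is the analogue of \eqref{eq:key estimate for ev Ges}: in the surface case, the crucial gradient-term estimate is supplied by \cite{MR3299822} (cf. \cite{MR2729317}), which guarantees that wherever $G_{\varepsilon,\sigma} > 0$,
\[
(\partial_t - \mathcal{L})G_{\varepsilon,\sigma} \le G_{\varepsilon,\sigma}\left(\sigma \vert A\vert_F^2 - \gamma\frac{\vert\nabla A\vert^2}{F^2} + \gamma^{-1}\frac{\vert\nabla G_{\varepsilon,\sigma}\vert^2}{G_{\varepsilon,\sigma}^2}\right)
\]
for some $\gamma = \gamma(f,\alpha,\varepsilon) > 0$. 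This is exactly the form needed to run the rest of the argument unchanged; the footnote in the excerpt flags that this is the one place where surfaces genuinely differ from convex speeds.

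Granting that estimate, the evolution of $\psi^2 v^2$ proceeds as in the proof of the first lemma of Section \ref{sec:convexity estimate for convex speeds}, using the cut-off properties, Young's inequality, and uniform parabolicity \eqref{eq:uniform parabolic} (which holds here since general admissible speeds on surfaces preserve convex cones and hence stay uniformly parabolic, by \cite[Corollary 2.3]{MR3299822}). I would then integrate against $d\mu$, handle the non-vanishing $\int \mathcal{L}(\psi^2 v^2)\,d\mu$ term exactly as before (homogeneity gives $\ddot F \sim F^{-1}$, no concavity needed), and apply the Poincaré-type inequality (Proposition \ref{prop:Poincare}) with $u = \psi v$ — valid because the relevant cone here is $\Gamma^2_+$, which meets $\mathrm{Cyl} = \emptyset$ trivially in dimension two and satisfies the positivity hypothesis. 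This yields the $L^2$ estimate \eqref{eq:L2 estimate} verbatim (with $\tfrac{1}{2n} = \tfrac14$), provided $p \ge L(f,\alpha,\varepsilon)$ and $\sigma p^{1/2} \le \ell(f,\alpha,\varepsilon)$.

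For the $L^\infty$ bound I would run the Stampacchia iteration exactly as in the $n = 2$ case at the end of Section \ref{sec:convexity estimate for convex speeds}: since $n = 2$, the Michael--Simon inequality is unavailable, so one uses the Poincaré inequality (Proposition \ref{T:Poincare}) with, say, $s = 2$, absorbing the support-measure factor $a(k,R)^{1/s}$ using the smallness $a(k,R) \le \Omega^{\sigma p}/k^p = \Omega^q/k^p \le 1$ for $k$ large, which follows from hypothesis (3) of the theorem. The functions $v_k^2 := (G_{\varepsilon,\sigma} - k)_+^p$, the sets $A_{k,R}(t)$, and the quantities $u(k,R)$, $a(k,R)$ are defined as before; Hölder with an exponent $\rho = \rho(n) > 1 + n/2$ (here $\rho > 2$) makes the Stampacchia exponent $\gamma > 1$, and choosing $\sigma p = q$ converts everything into the claimed bound $\kappa_2 - \kappa_1 = G \le 2\varepsilon F + c(f,\alpha,q,\varepsilon)\Theta(\Theta V/(1-\vartheta))^{2/q}$ on $B_{(\lambda-\delta)\vartheta R}$; relabelling $\varepsilon$ gives \eqref{eq:umbilic surfaces}.

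The main obstacle — indeed the only non-mechanical point — is verifying that the pinching function $G$ for surfaces satisfies the gradient-reaction inequality above with a constant depending only on $f$, $\alpha$, $\varepsilon$; this is where the structure of two-dimensional curvature functions (and in particular the fact that $\ddot f$ has a definite sign pattern forced by symmetry and monotonicity alone, without any concavity assumption) must be exploited, and it is precisely the content imported from \cite{MR2729317} and \cite{MR3299822}. Once that reaction estimate is in hand, every subsequent step is a transcription of Section \ref{sec:convexity estimate for convex speeds} with $n = 2$, so I would present the proof largely by reference to that section, spelling out only the substitution of the surface pinching function and the $\alpha$-dependence of the constants.
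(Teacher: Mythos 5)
Your proposal follows the same overall route as the paper: a pinching function measuring deviation from umbilicity, the perturbed quantity $G_{\varepsilon,\sigma}$, the key evolution inequality in the form \eqref{eq:key estimate for ev Ges}, and then a verbatim transcription of the machinery of Section \ref{sec:convexity estimate for convex speeds}, including the $n=2$ replacement of Michael--Simon by Proposition \ref{T:Poincare}. The cosmetic difference is the normalization: you take the one-homogeneous $G=\kappa_2-\kappa_1$ with $G_{\varepsilon,\sigma}=(G-\varepsilon F)F^{\sigma-1}$, whereas the paper takes the zero-homogeneous $G=1-\kappa_1/\kappa_2$ with $G_{\varepsilon,\sigma}=(G-\varepsilon)F^{\sigma}$; on the preserved cone $\{\kappa_1\ge\alpha\kappa_2>0\}$ one has $\kappa_2\sim F$, so the two choices are interchangeable up to adjusting $\varepsilon$ and $\gamma$, and both produce a $\sigma$-homogeneous quantity bounded by a multiple of $F^\sigma$, which is all the iteration needs.

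The substantive point is the one you yourself flag as the only non-mechanical step: the gradient--reaction inequality for the surface pinching function. You assert it by citation to \cite{MR2729317,MR3299822}; the paper does \emph{not} quote it but devotes Lemma \ref{lem:evolve G surfaces} to proving it, via the evolution equations for the individual principal curvatures away from umbilic points, the Euler identities $\dot F^{1}\kappa_1+\dot F^{2}\kappa_2=F$ and $\ddot F^{11}\kappa_1^2+2\ddot F^{12}\kappa_1\kappa_2+\ddot F^{22}\kappa_2^2=0$, the Codazzi symmetries, and a carefully weighted decomposition of $(\nabla_1A_{22})^2+(\nabla_2A_{11})^2$ to extract $\gamma\vert\nabla A\vert^2$ with $\gamma=\gamma(f,\alpha,\varepsilon)$. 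A complete write-up would need either to verify that the cited results give the inequality for your one-homogeneous $G$ with exactly this constant dependence (the second derivative $\ddot G^{pq,rs}$ of $\kappa_2-\kappa_1$ contains the singular factor $2/(\kappa_2-\kappa_1)$, which is controlled only because $G\ge\varepsilon F$ on the support of $v$), or to reproduce the computation; the structure of the required estimate does not follow formally from monotonicity and homogeneity alone. Separately, your justification for applying Proposition \ref{prop:Poincare} is garbled: $\mathrm{Cyl}$ is \emph{not} empty when $n=2$ (it contains the umbilic ray $\mathrm{Cyl}_0$ and the axis points $\mathrm{Cyl}_1$), and $\Gamma^2_+$ itself contains $\mathrm{Cyl}_0$. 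The inequality applies because the curvatures on $\{\psi v>0\}$ lie in the subcone $\{\kappa_1\ge\alpha\kappa_2,\ \kappa_2-\kappa_1\ge\varepsilon F\}$, whose closure avoids $\mathrm{Cyl}_0$ by the second condition and $\mathrm{Cyl}_1$ by the first. With these two points repaired, the argument is the paper's.
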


\begin{theorem}[Convexity estimate --- surfaces]\label{thm:convexity surfaces}
Let $f:\Gamma^2\to\R$ be an admissible speed function. If a solution to the flow with speed $F=f(\kappa_1,\kappa_2)$ is properly defined in the spacetime cylinder $\overline B_{\lambda R} \times \left[ 0, \frac{1}{4} R^2 \right) \subset \mathbb{R}^{3} \times \mathbb{R}$ and satisfies
\begin{enumerate}
\item $\displaystyle\inf_{B_{\lambda R} \times \left\{ 0 \right\} \cup \partial B_{\lambda R} \times \left( 0, \frac{1}{4} R^2 \right)} \frac{\kappa_1}{\kappa_2} \geq -\alpha^{-1}$, $\alpha > 0$,
\item $\displaystyle\sup_{B_{\lambda R} \times \left\{ 0 \right\} \cup B_{\lambda R}\setminus B_{(\lambda-\delta)R} \times \left( 0, \frac{1}{4} R^2 \right)} F \leq \Theta\, R^{-1}$, and
\item $\displaystyle\frac{R^2}{4} \int_{B_{\lambda R}} F^q d\mu_0 + \frac{1}{\delta^2} \int_0^{\frac{R^2}{4}} \int_{B_{\lambda R} \backslash B_{(\lambda-\delta)R}} F^q d\mu\, dt \leq \left( \Theta\, R^{-1}\right)^q \left( V \, R\right)^{4}$,
\end{enumerate}
then, given any $\varepsilon>0$ and $\vartheta \in \left(0,1\right)$, it satisfies
\begin{equation} \label{eq:convexity surfaces}
\kappa_1 \geq -\varepsilon\,F - C_\varepsilon R^{-1}\;\;\text{in}\;\; B_{\left( \lambda- \delta\right) \vartheta R} \times \left( 0, \tfrac{1}{4} R^2 \right)\,,
\end{equation}
where $C_\varepsilon= c\left(f,\alpha,q,\varepsilon \right) \Theta \left( \frac{\Theta\, V}{1-\vartheta} \right)^{\frac{2}{q}}$.
\end{theorem}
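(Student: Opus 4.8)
The plan is to reproduce the argument of Section~\ref{sec:convexity estimate for convex speeds} essentially verbatim, the only genuinely new ingredient being a replacement for the gradient estimate \eqref{eq:Q estimate convex case}, which there rested on convexity of $f$. As always it suffices to take $R=1$, and since $n=2$ no concavity hypothesis on $f$ is available or needed. Hypothesis~(1) asserts that the pinching condition $\kappa_1/\kappa_2\ge-\alpha^{-1}$ holds on the parabolic boundary; this condition cuts out a convex symmetric cone $\Gamma^2_0\supset\Gamma^2_+$, so by the maximum principle (\cite[Corollary 2.3]{MR3299822}, cf.\ \cite{MR2729317}) it persists throughout the region where the solution is defined, and, just as in the derivation of \eqref{eq:uniform parabolic}, the flow is uniformly parabolic there. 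Along the solution I introduce the surface pinching function $G$ of \cite{MR2729317,MR3299822} --- a smooth, symmetric, one-homogeneous, non-negative function of $(\kappa_1,\kappa_2)$ vanishing precisely when $\kappa_1\ge0$, and satisfying $\kappa_1\ge-cG$ on $\Gamma^2_0$ for some $c=c(f,\alpha)>0$ --- and, given $\varepsilon>0$, $\sigma\in(0,1)$ and $p>10$, I set $G_{\varepsilon,\sigma}:=(G-\varepsilon F)F^{\sigma-1}$, $v:=(G_{\varepsilon,\sigma})_+^{p/2}$, retaining the cut-off $\psi=\zeta\circ X$.

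The machinery of Section~\ref{sec:convexity estimate for convex speeds} needs just two facts about $G$: that $G\le c_0F$ for some $c_0=c_0(f,\alpha)$, and --- crucially --- the gradient inequality
\[
F^{\sigma-1}\bigl(\dot G^{kl}\ddot F^{pq,rs}-\dot F^{kl}\ddot G^{pq,rs}\bigr)\nabla_kA_{pq}\nabla_lA_{rs}\le-\gamma\,G_{\varepsilon,\sigma}\frac{\vert\nabla A\vert^2}{F^2}\quad\text{on }\{G_{\varepsilon,\sigma}>0\}
\]
for some $\gamma=\gamma(f,\alpha,\varepsilon)>0$. For surfaces these are furnished not by convexity of $f$ but by the two-dimensional theory of \cite{MR2729317,MR3299822}, which I simply quote. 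Granted them, the general evolution identity \eqref{eq:evlnGes} --- valid for \emph{any} smooth symmetric one-homogeneous $G$ --- combines with the displayed inequality to give \eqref{eq:key estimate for ev Ges}, and from there the argument is word for word that of Section~\ref{sec:convexity estimate for convex speeds}: the evolution inequality \eqref{eq:evolve psi v} for $\psi^2v^2$; the computation of $\tfrac{d}{dt}\int\psi^2v^2\,d\mu$, in which the extra term $\int\mathcal L(\psi^2v^2)\,d\mu$ is absorbed using that $\ddot F$ is comparable to $F^{-1}$ on $\Gamma^2_0$ and Young's inequality; the application of the Poincar\'e-type inequality Proposition~\ref{prop:Poincare} with $u=\psi v$ --- legitimate because $G_{\varepsilon,\sigma}>0$ forces $\kappa_1<0$, so the curvatures on $\spt v$ lie in a fixed subcone of $\Gamma^2$ missing $\mathrm{Cyl}$ (and, by the preserved pinching, lying in $\{w_1+w_2>0\}$), exactly as in \cite{MR4609824} --- leading to \eqref{eq:d/dt L2}; two integrations in time producing the $L^2$ estimate \eqref{eq:L2 estimate}; and finally Stampacchia iteration via Lemma~\ref{T:Stampacchia}. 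Because $n=2$, the Michael--Simon inequality is replaced by Proposition~\ref{T:Poincare} at the iteration step, precisely as at the end of Section~\ref{sec:convexity estimate for convex speeds}. The outcome is $G\le2\varepsilon F+C_\varepsilon$ on $B_{\vartheta(\lambda-\delta)}$ with $C_\varepsilon$ as in the statement, which, via $\kappa_1\ge-cG$ and a relabelling of $\varepsilon$, is precisely \eqref{eq:convexity surfaces} (in the normalised case $R=1$).

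The hard part is thus isolated entirely in the gradient inequality above: for a general admissible, \emph{non-convex} speed the third-order expression it bounds has no definite sign in dimensions $n\ge3$ --- which is exactly why concavity conditions must be imposed in Sections~\ref{S:concave}--\ref{S:Fcic} --- whereas in two dimensions the Codazzi equations constrain the components of $\nabla A$ tightly enough to force the stated coercivity on $\{G_{\varepsilon,\sigma}>0\}$. This is precisely the content of the surface analysis of \cite{MR2729317,MR3299822}; once it is invoked, no new ideas enter and the localization machinery of \cite{MR4609824}, already adapted to fully nonlinear speeds in Section~\ref{sec:convexity estimate for convex speeds}, does the rest. (The companion umbilic estimate, Theorem~\ref{thm:umbilic surfaces}, is obtained in exactly the same way, taking instead the pinching function that vanishes on the umbilic locus $\{\kappa_1=\kappa_2\}$.)
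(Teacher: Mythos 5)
Your proposal follows the paper's route exactly: the paper likewise reduces the theorem to a single evolution/gradient inequality for a two-dimensional pinching quantity and then invokes the localized Stampacchia machinery of Section~\ref{sec:convexity estimate for convex speeds} verbatim (from \eqref{eq:key estimate for ev Ges} onwards), with Proposition~\ref{T:Poincare} replacing Michael--Simon since $n=2$. The only cosmetic differences are that the paper works with the zero-homogeneous ratio $G:=-\kappa_1/\kappa_2$ and $G_{\varepsilon,\sigma}:=(G-\varepsilon)F^\sigma$ (equivalent to your one-homogeneous normalization upon multiplying by $F$), and that it derives the key gradient inequality directly by arguing as in Lemma~\ref{lem:evolve G surfaces} rather than quoting it from \cite{MR2729317,MR3299822}.
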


\newtheorem{evlnGs}{Lemma}[section]

\subsection{Proof of the umbilic estimate for flows of surfaces by general admissible speeds}

Following \cite[Proposition 19.5]{MR4249616}, consider the function $G:=1-\frac{\kappa_1}{\kappa_2}$.

\begin{lemma}\label{lem:evolve G surfaces}
There exists $\gamma=\gamma(f,\alpha,\varepsilon)>0$ such that 
\[
\frac{(\partial_t-\mathcal{L})G}{G}\le -\gamma\frac{\vert\nabla A\vert^2}{F^2}+\gamma^{-1}\frac{\vert \nabla G\vert^2}{G^2}
\]
wherever $\alpha\le\frac{\kappa_1}{\kappa_2}\le1-\varepsilon$.
\end{lemma}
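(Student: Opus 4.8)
The plan is to compute the evolution equation for $G = 1 - \kappa_1/\kappa_2$ directly, treating it as a smooth symmetric one-homogeneous function of the principal curvatures (well-defined and smooth on the cone $\{\kappa_2 > 0\}$, or on the region $\alpha \le \kappa_1/\kappa_2 \le 1-\varepsilon$ which stays away from the diagonal where $\kappa_1 = \kappa_2$, so the quotient is smooth there). First I would record the general evolution equation for such a speed-type function $G$ under the flow; since $G$ is one-homogeneous, the calculation in \cite[Lemma 3.4]{MR3218814} (which is \eqref{eq:evlnGes} with $\varepsilon=0$, $\sigma=1$, so the $F$-powers drop out) gives
\begin{equation*}
(\partial_t - \mathcal{L})G = \left(\dot G^{kl}\ddot F^{pq,rs} - \dot F^{kl}\ddot G^{pq,rs}\right)\nabla_k A_{pq}\nabla_l A_{rs} + \dot G^{kl} h_k^{\ m} h_{ml}\,F - G\,\dot F^{kl}\nabla_k\nabla_l(\text{...})\,,
\end{equation*}
but more usefully, writing things in the self-similar form, one has the schematic identity $(\partial_t - \mathcal{L})G = \mathcal Q_G + G\,\langle A, A\rangle_F$-type terms, where $\mathcal Q_G := (\dot G^{kl}\ddot F^{pq,rs} - \dot F^{kl}\ddot G^{pq,rs})\nabla_k A_{pq}\nabla_l A_{rs}$ is the gradient term that must be controlled. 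The zeroth-order terms here vanish or have a favourable sign: because $G$ is one-homogeneous and $f$ satisfies Euler's relation, the terms $\dot G^{kl} h_k^{\ m}h_{ml}F - G\dot F^{kl}h_k^{\ m}h_{ml}$ can be shown (as in the surface analysis of \cite{MR4249616, MR3299822}) to be $\le 0$ or absorbable, since in two dimensions $G = 1 - \kappa_1/\kappa_2$ already encodes the pinching and one-homogeneity forces a cancellation.

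The core of the proof is the gradient estimate: I would show that there is $\gamma = \gamma(f,\alpha,\varepsilon) > 0$ with
\begin{equation*}
\mathcal Q_G \le -2\gamma\, G\,\frac{|\nabla A|^2}{F^2} + \text{(terms absorbable into } G\gamma^{-1}|\nabla G|^2/G^2)\,,
\end{equation*}
wherever $\alpha \le \kappa_1/\kappa_2 \le 1-\varepsilon$. This is where surfaces are special: the key structural fact (going back to Andrews--Baker and used in \cite[Proposition 19.5]{MR4249616}, \cite{MR3299822, MR2729317}) is that for $n=2$ any admissible $f$ is automatically ``sufficiently concave'' in the relevant directions because there are only two principal curvatures, so the bad second-derivative term $-\dot F^{kl}\ddot G^{pq,rs}\nabla_k A_{pq}\nabla_l A_{rs}$ can be estimated using the Codazzi identities to split $\nabla A$ into its gradient-of-$G$ part and a remainder proportional to $\nabla A$ with a definite coefficient. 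Concretely, on a surface the components of $\nabla A$ are constrained so that, modulo $\nabla G$, one controls $|\nabla A|^2/F^2$ from below by the terms appearing in $\mathcal Q_G$; this is exactly the two-dimensional mechanism that makes the estimate work for \emph{every} admissible speed, unlike in higher dimensions where convexity/concavity of $f$ is needed (as flagged in the footnote to \eqref{eq:Q estimate convex case}).

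After establishing the gradient inequality for $\mathcal Q_G$, I would assemble the pieces: combine with the zeroth-order terms (discarding or absorbing them), apply Young's inequality to any cross terms of the form $\langle \nabla G, \nabla F/F\rangle_F$ to throw them partly into $-\gamma|\nabla A|^2/F^2$ and partly into $\gamma^{-1}|\nabla G|^2/G^2$, and use that on the region $\alpha \le \kappa_1/\kappa_2 \le 1-\varepsilon$ all the relevant quantities ($\dot G$, $\ddot G$, $|\nabla G|/G$ versus $|\nabla A|/F$) are mutually comparable with constants depending only on $f$, $\alpha$, $\varepsilon$. The main obstacle is the sharp two-dimensional gradient computation proving $\mathcal Q_G \le -2\gamma G|\nabla A|^2/F^2 + (\text{good})$ with a \emph{strictly positive} $\gamma$ that survives as $\kappa_1/\kappa_2 \to 1$ is excluded but $\to \alpha$ is allowed — one must check the constant does not degenerate at the endpoints of the pinching interval, which requires carefully tracking the dependence through the Codazzi substitution and exploiting that $\varepsilon > 0$ keeps us bounded away from the umbilic locus. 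Everything else (the zeroth-order cancellations, the Young's inequality absorptions) is routine given the template already laid out in Section \ref{sec:convexity estimate for convex speeds} and in \cite{MR4249616, MR3299822}.
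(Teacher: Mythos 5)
You misidentify the homogeneity of $G$: the function $G = 1 - \kappa_1/\kappa_2$ is \emph{zero}-homogeneous, not one-homogeneous, and this is not a cosmetic slip. Setting $\varepsilon = 0$, $\sigma = 1$ in \eqref{eq:evlnGes} (which requires a one-homogeneous $G$) would produce a zeroth-order term $G\lvert A\rvert^2_F$ that cannot be discarded into the claimed bound $-\gamma\lvert\nabla A\rvert^2/F^2 + \gamma^{-1}\lvert\nabla G\rvert^2/G^2$, which contains no $\lvert A\rvert^2$ term. In fact the reason the zeroth-order term is absent here is precisely that $G$ is zero-homogeneous: Euler's identity gives $\dot G^{ij}A_{ij} = 0$, so the reaction term $\lvert A\rvert^2_F\,\dot G^{ij}A_{ij}$ vanishes identically. (The $\sigma\lvert A\rvert^2_F$ term only reappears later, after passing to $G_{\varepsilon,\sigma} := (G-\varepsilon)F^\sigma$, which is $\sigma$-homogeneous.) Your invocation of ``one-homogeneity forces a cancellation'' therefore both rests on a false premise and, if followed through, would leave an unabsorbed reaction term.

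The second issue is the mechanism you attribute to the two-dimensional case. It is not that $f$ becomes ``automatically sufficiently concave''; the good gradient term does not come from concavity of $f$ at all. The paper computes $(\partial_t-\mathcal{L})\kappa_i$ in a principal frame using the eigenvalue second-derivative correction $\nabla_k\nabla_\ell\kappa_i - \nabla_k\nabla_\ell A_{ii}$, and combining these for $\kappa_1$ and $\kappa_2$ with the Euler relation $\dot F^1\kappa_1 + \dot F^2\kappa_2 = F$ produces the manifestly signed contribution
\[
\frac{2F}{\kappa_1\kappa_2(\kappa_2-\kappa_1)}\left[(\nabla_1A_{12})^2 + (\nabla_2A_{12})^2\right]
\]
to $(\partial_t-\mathcal{L})G_0$ with $G_0 := \kappa_1/\kappa_2$; Codazzi symmetry rewrites $(\nabla_1A_{12})^2+(\nabla_2A_{12})^2 = (\nabla_2A_{11})^2+(\nabla_1A_{22})^2$. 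The $\ddot F$ contribution is then merely \emph{bounded} in magnitude by $C(\alpha,f)F^{-1}$ (by homogeneity, not concavity) and absorbed. The step your proposal flags as the ``main obstacle'' --- showing $(\nabla_1A_{22})^2+(\nabla_2A_{11})^2 \geq \gamma\lvert\nabla A\rvert^2 - C\big(\lvert\nabla G_0\rvert^2/G_0^2 + \cdots\big)$ with a strictly positive $\gamma$ --- is where the paper's argument has genuine content: it uses the gradient identity $\nabla_kG_0/G_0 = \nabla_kA_{11}/\kappa_1 - \nabla_kA_{22}/\kappa_2$ together with a specific algebraic decomposition with parameters $a=1/(1+G_0^2)$, $b=G_0^2/(1+G_0^2)$ to trade $\nabla_1A_{11}$ and $\nabla_2A_{22}$ for $\nabla_1A_{22}$, $\nabla_2A_{11}$, and $\nabla G_0$. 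Your sketch neither supplies this computation nor identifies the structure that makes it work, so the positive $\gamma$ --- which is the whole point of the lemma --- is not established.
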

\begin{proof}
Away from points of principal curvature multiplicity (umbilic points in the case of surfaces), an (evolving) hypersurface admits a smooth frame of principal directions, $\{e_i\}_{i=1}^n$. In particular, the principal curvatures $\kappa_i$ are smooth at such points. Differentiating the eigenvalue equations $A(e_i)=\kappa_ie_i$ then yields the identities
\[
\nabla_k\kappa_i=\nabla_kA_{ii}\,,\;\;
\nabla_k\nabla_\ell\kappa_i=\nabla_k\nabla_\ell A_{ii}-2\sum_{p\ne i}\frac{\nabla_kA_{ip}\nabla_\ell A_{ip}}{\kappa_i-\kappa_p}\,,\;\;\text{and}\;\;
\partial_t\kappa_i=\nabla_tA_{ii}\,.
\]
Since (away from points of principal curvature multiplicity) the second fundamental form evolves  according to
\[
(\nabla_t-\mathcal{L})A_{ij}=\vert A\vert_F^2+\ddot F^{pq}\nabla_iA_{pp}\nabla_jA_{qq}+2\sum_{p>q}\frac{\dot F^p-\dot F^q}{\kappa_p-\kappa_q}\nabla_iA_{pq}\nabla_jA_{pq}\,,
\]
we find, in the case at hand, that 
\[
(\partial_t-\mathcal{L})\kappa_1
=\vert A\vert^2_F\kappa_1+\ddot F^{pq}\nabla_1A_{pp}\nabla_1A_{qq}+2\frac{\dot F^2}{\kappa_2-\kappa_1}\left[(\nabla_1A_{12})^2+(\nabla_2A_{12})^2\right]
\]
and
\[
(\partial_t-\mathcal{L})\kappa_2
=\vert A\vert^2_F\kappa_2+\ddot F^{pq}\nabla_2A_{pp}\nabla_2A_{qq}-2\frac{\dot F^1}{\kappa_2-\kappa_1}\left[(\nabla_1A_{12})^2+(\nabla_2A_{12})^2\right]
\]
away from umbilic points. Thus, wherever $\alpha\le\frac{\kappa_1}{\kappa_2}\le1-\varepsilon$, the function $G_0:=\frac{\kappa_1}{\kappa_2}$ is smooth and satisfies
\begin{equation}\label{eq:surfaces G grad identity}
\frac{\nabla_kG_0}{G_0}=\frac{\nabla_kA_{11}}{\kappa_1}-\frac{\nabla_kA_{22}}{\kappa_2}
\end{equation}
and
\begin{equation} \label{E:G0}
\begin{split}
\frac{(\partial_t-\mathcal{L})G_0}{G_0}
={}&\ddot F^{pq}\left(\frac{\nabla_1A_{pp}\nabla_1A_{qq}}{\kappa_1}-\frac{\nabla_2A_{pp}\nabla_2A_{qq}}{\kappa_2}\right)\\{}&+\frac{2F}{\kappa_1\kappa_2(\kappa_2-\kappa_1)}\left[(\nabla_1A_{12})^2+(\nabla_2A_{12})^2\right]+2\left\langle\frac{\nabla G_0}{G_0},\frac{\nabla A_{22}}{\kappa_2}\right\rangle_F\,,
\end{split}
\end{equation}
where we used the identity
\[
\dot F^{1}\kappa_1+\dot F^{2}\kappa_2=F\,,
\]
which is an immediate consequence of Euler's theorem for homogeneous functions. Using \eqref{eq:surfaces G grad identity} to replace $\nabla_1A_{11}$ and $\nabla_2A_{22}$, and the identity
\[
\ddot F^{11}\kappa_1^2+2\ddot F^{12}\kappa_1\kappa_2+\ddot F^{22}\kappa_2^2=0\,,
\]
which also follows from Euler's theorem for homogeneous functions, we may rewrite the terms on the first line of \eqref{E:G0} as (cf. \cite[Proposition 19.5]{MR4249616})
\begin{align*}
\ddot F^{pq}\left(\frac{\nabla_1A_{pp}\nabla_1A_{qq}}{\kappa_1}\right.{}&-\left.\frac{\nabla_2A_{pp}\nabla_2A_{qq}}{\kappa_2}\right)\\
=\ddot F^{11}\kappa_1&\left(2\frac{\nabla_1G_0}{G_0}\frac{\nabla_1A_{22}}{\kappa_2}+\frac{(\nabla_1G_0)^2}{G_0^2}\right)+2\ddot F^{12}\left(\frac{\nabla_1G_0}{G_0}\nabla_1A_{22}+\frac{\nabla_2G_0}{G_0}\nabla_2A_{11}\right)\\
+\ddot F^{22}&\kappa_2\left(2\frac{\nabla_2G_0}{G_0}\frac{\nabla_2A_{11}}{\kappa_2}-\frac{(\nabla_2G_0)^2}{G_0^2}\right)\,.
\end{align*}
Since, due to the homogeneity of $f$, $|\ddot F^{pq}|\le C(\alpha,f)F^{-1}$, we may thereby estimate
\[
\ddot F^{pq}\left(\frac{\nabla_1A_{pp}\nabla_1A_{qq}}{\kappa_1}-\frac{\nabla_2A_{pp}\nabla_2A_{qq}}{\kappa_2}\right)\ge -C\left(\frac{\vert\nabla G_0\vert^2}{G_0^2}+\frac{\vert\nabla A\vert}{F}\frac{\vert\nabla G_0\vert}{G_0}\right)
\]
for some $C=C(f,\alpha)<\infty$.

We next use \eqref{eq:surfaces G grad identity} to write
\begin{align*}
(\nabla_1A_{22})^2+{}&(\nabla_2A_{11})^2=(\nabla_1A_{22})^2+a^2(\nabla_1A_{11})^2-a^2\kappa_1^2\left(\frac{\nabla_1A_{22}}{\kappa_2}+\frac{\nabla_1G_0}{G_0}\right)^2\\
{}&\qquad\qquad\quad+(\nabla_2A_{11})^2+b^2(\nabla_2A_{22})^2-b^2\kappa_2^2\left(\frac{\nabla_2A_{11}}{\kappa_1}-\frac{\nabla_2G_0}{G_0}\right)^2\\
={}&\left(1-a^2G_0^2\right)(\nabla_1A_{22})^2+a^2(\nabla_1A_{11})^2+\left(1-b^2G_0^{-2}\right)(\nabla_2A_{11})^2+b^2(\nabla_2A_{22})^2\\
{}&-a^2\kappa_1^2\left(\frac{(\nabla_1G_0)^2}{G_0^2}+2\frac{\nabla_1A_{22}}{\kappa_2}\frac{\nabla_1G_0}{G_0}\right)-b^2\kappa_2^2\left(\frac{(\nabla_2G_0)^2}{G_0^2}-2\frac{\nabla_2A_{11}}{\kappa_1}\frac{\nabla_2G_0}{G_0}\right)\,.
\end{align*}
Since, by hypothesis, $\alpha\le G_0\le 1-\varepsilon$, taking $a=\frac{1}{1+G_0^2}$ and $b=\frac{G_0^2}{1+G_0^2}$, we obtain
\[
(\nabla_1A_{22})^2+(\nabla_2A_{11})^2\ge \gamma\vert\nabla A\vert^2-C\left(\frac{\vert\nabla G_0\vert^2}{G_0^2}+\frac{\vert\nabla G_0\vert}{G_0}\frac{\vert\nabla A\vert}{F}\right)
\]
for some $\gamma=\gamma(f,\alpha,\varepsilon)>0$ and $C=C(f,\alpha,\varepsilon)<\infty$. The claim now follows by absorbing remaining terms using Young's inequality. 
\end{proof}

In particular, we find that the inequality $\kappa_1\ge\alpha \, \kappa_2$ is preserved, and hence
\[
\Lambda^{-1}\delta^{ij}\le \dot F^{ij}\le\Lambda\, \delta^{ij}
\]
for some $\Lambda=\Lambda(f,\alpha)$.

Given $\varepsilon>0$ and $\sigma\in(0,1)$, consider the function $G_{\varepsilon,\sigma}:=(G-\varepsilon)F^\sigma$. Since $G_\varepsilon:=G-\varepsilon\le G\le 1$, we find, wherever $G_{\varepsilon,\sigma}>0$, that
\begin{multline*}
(\partial_t-\mathcal{L})G_{\varepsilon,\sigma}\\
\le{} G_{\varepsilon,\sigma}\left(\sigma\vert A\vert^2_F+\gamma^{-1}\frac{\vert\nabla G_\varepsilon\vert^2}{G_\varepsilon^2}-\gamma \frac{\vert\nabla A\vert^2}{F^2}+\sigma(1-\sigma)\frac{\vert\nabla F\vert^2_F}{F^2}-2\sigma \left\langle\frac{\nabla G_\varepsilon}{G_\varepsilon},\frac{\nabla F}{F}\right\rangle_F\right).
\end{multline*}
Replacing incidences of $\nabla G_\varepsilon$ using the identity
\[
\frac{\nabla G_{\varepsilon,\sigma}}{G_{\varepsilon,\sigma}}=\frac{\nabla G_\varepsilon}{G_\varepsilon}-\sigma\frac{\nabla F}{F}
\]
and applying Young's inequality then yields
\begin{align*}
\frac{(\partial_t-\mathcal{L})G_{\varepsilon,\sigma}}{G_{\varepsilon,\sigma}}\le{}&\sigma\vert A\vert^2_F-\gamma \frac{\vert\nabla A\vert^2}{F^2}+\gamma^{-1}\left(\frac{\vert\nabla G_{\varepsilon,\sigma}\vert^2}{G_{\varepsilon,\sigma}^2}+\sigma\frac{\vert\nabla F\vert^2}{F^2}\right)
\end{align*}
wherever $G_{\varepsilon,\sigma}>0$ for some (much smaller than before) $\gamma=\gamma(f,\alpha,\varepsilon)$. We conclude that
\begin{align*}
\frac{(\partial_t-\mathcal{L})G_{\varepsilon,\sigma}}{G_{\varepsilon,\sigma}}\le{}&\sigma\vert A\vert^2_F-\gamma \frac{\vert\nabla A\vert^2}{F^2}+\gamma^{-1}\frac{\vert\nabla G_{\varepsilon,\sigma}\vert^2}{G_{\varepsilon,\sigma}^2}
\end{align*}
for some (smaller than before) $\gamma=\gamma(f,\alpha,\varepsilon)>0$, so long as $\sigma\le\ell=\ell(f,\alpha,\varepsilon)$.

We may thus proceed exactly as in Section \ref{sec:convexity estimate for convex speeds} (from \eqref{eq:key estimate for ev Ges} onwards) to establish Theorem \ref{thm:umbilic surfaces}.

\subsection{Proof of the convexity estimate for flows of surfaces by general admissible speeds} \label{SS:surfacesconvexity}

Consider the function $G:=-\frac{\kappa_1}{\kappa_2}$. Arguing as in Lemma \ref{lem:evolve G surfaces} yields
\[
\frac{(\partial_t-\mathcal{L})G}{G}\le -\gamma\frac{\vert\nabla A\vert^2}{F^2}+\gamma^{-1}\frac{\vert \nabla G\vert^2}{G^2}
\]
for some $\gamma=\gamma(f,\alpha,\varepsilon)>0$ wherever $-\alpha^{-1}\le\frac{\kappa_1}{\kappa_2}\le-\varepsilon$.

Thus, if we set, for any $\varepsilon>0$ and $\sigma\in(0,1),$ $G_{\varepsilon,\sigma}:=(G-\varepsilon)F^\sigma$, then, arguing as above, we find that
\begin{align*}
\frac{(\partial_t-\mathcal{L})G_{\varepsilon,\sigma}}{G_{\varepsilon,\sigma}}\le{}&\sigma\vert A\vert^2_F-\gamma \frac{\vert\nabla A\vert^2}{F^2}+\gamma^{-1}\frac{\vert\nabla G_{\varepsilon,\sigma}\vert^2}{G_{\varepsilon,\sigma}^2}
\end{align*}
for some $\gamma=\gamma(f,\alpha,\varepsilon)>0$, so long as $\sigma\le\ell=\ell(f,\alpha,\varepsilon)$.

We may thus proceed exactly as in Section \ref{sec:convexity estimate for convex speeds} (from \eqref{eq:key estimate for ev Ges} onwards) to establish Theorem \ref{thm:convexity surfaces}.

\section{Hypersurfaces evolving by concave speeds} \label{S:concave}

Consider now the evolution of hypersurfaces by concave speeds $f:\Gamma^n\to\R$. In contrast to convex speeds, concave speeds do not tend to preserve general convex cones of curvatures, but do preserve the cones\footnote{To see this, apply the maximum principle to \eqref{eq:evlnGes} when $\sigma=0$ with $\varepsilon=\alpha^{-1}$.} $\Gamma_0^n:= \{z\in\R^n:f(z_1,\dots,z_n)\ge \alpha \, g(z_1,\dots,z_n)\}$, $\alpha>0$, whenever $g$ is a monotone non-decreasing, one-homogeneous, convex symmetric function which is defined on a convex cone containing $\Gamma^n$ \cite{MR1385524}. If $\Gamma^n\Subset\{z\in\R^n:g(z_1,\dots,z_n)>0\}$ and $f|_{\partial\Gamma^n}\equiv 0$, then these cones satisfy $\Gamma^n_0\Subset \Gamma^n$, which ensures that the flow remains uniformly parabolic \cite{MR1385524}. We will establish the following cylindrical estimates for such speeds.

\begin{theorem}[Cylindrical estimates --- concave speeds]\label{thm:cylindrical estimates concave}
Let $f:\Gamma^n\to\R$ be a concave admissible speed such that $\Gamma^n\subset\Gamma^n_{m+1}$, $m\le n-1$, and $f|_{\partial\Gamma^n}=0$. If a solution to the flow with speed $F=f(\kappa_1,\dots,\kappa_n)$ is properly defined in the spacetime cylinder $\overline B_{\lambda R} \times \left[ 0, \frac{1}{2n} R^2 \right) \subset \mathbb{R}^{n+1} \times \mathbb{R}$ and satisfies
\begin{enumerate}
\item $\displaystyle\inf_{B_{\lambda R} \times \left\{ 0 \right\} \cup \partial B_{\lambda R} \times \left( 0, \frac{1}{2n} R^2 \right)} \frac{F}{H+\vert A\vert} \geq \alpha > 0$,
\item $\displaystyle\sup_{B_{\lambda R} \times \left\{ 0 \right\} \cup B_{\lambda R}\setminus B_{(\lambda-\delta)R} \times \left( 0, \frac{1}{2n} R^2 \right)} F \leq \Theta\, R^{-1}$, and
\item $\displaystyle\frac{R^2}{2n} \int_{B_{\lambda R}} F^q d\mu_0 + \frac{1}{\delta^2} \int_0^{\frac{R^2}{2n}}\!\!\!\int_{B_{\lambda R} \backslash B_{(\lambda-\delta)R}} F^q d\mu\, dt \leq \left( \Theta\, R^{-1}\right)^q \left( V \, R\right)^{n+2}$,
\end{enumerate}
then, given any $\varepsilon>0$ and $\vartheta \in \left(0,1\right)$, it satisfies
\begin{equation}\label{eq:cylindrical estimates concave}
\kappa_n-c_mF\leq \varepsilon\,F + C_\varepsilon R^{-1}\;\;\text{in}\;\; B_{\left( \lambda- \delta\right) \vartheta R} \times \left( 0, \tfrac{1}{2n} R^2 \right)\,,
\end{equation}
where $C_\varepsilon= c\left(n,f,\alpha,q,\varepsilon\right) \Theta \left( \frac{\Theta\, V}{1-\vartheta} \right)^{\frac{2}{q}}$ and $c_m^{-1}:= f(\underbrace{0,\dots,0}_{m\text{-times}},1,\dots,1)$.
\end{theorem}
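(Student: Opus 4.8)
The plan is to transfer the Huisken--Stampacchia scheme of Section~\ref{sec:convexity estimate for convex speeds} to the present setting with an appropriately chosen pinching function; once the key differential inequality \eqref{eq:key estimate for ev Ges} is established, each step downstream of it transcribes essentially verbatim, the only genuinely new point being the verification that the Poincar\'e-type inequality of Proposition~\ref{prop:Poincare} applies. As usual it suffices to treat $R=1$. By the discussion preceding the statement the flow preserves the cone $\Gamma^n_0:=\{z\in\R^n:f(z)\ge\alpha(z_1+\dots+z_n+\vert z\vert)\}$, since $g(z):=z_1+\dots+z_n+\vert z\vert$ is monotone non-decreasing, one-homogeneous, convex and symmetric; hypothesis~(1) is precisely the statement that $\vec\kappa\in\Gamma^n_0$ on the parabolic boundary, so $\vec\kappa\in\Gamma^n_0$ throughout. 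Moreover $g>0$ on $\overline\Gamma{}^n\setminus\{0\}$ (because $\overline\Gamma{}^n\setminus\{0\}\subset\overline\Gamma{}^n_{m+1}\setminus\{0\}\subset\{w_1+\dots+w_n>0\}$), so $f|_{\partial\Gamma^n}=0$ forces $\Gamma^n_0\Subset\Gamma^n$, giving the uniform parabolicity bound \eqref{eq:uniform parabolic} with $\Lambda=\Lambda(n,f,\alpha)$ and, by homogeneity, $\vert\ddot F\vert\le CF^{-1}$ on $\Gamma^n_0$.

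I would then take the pinching function $G:M^n\times[0,T)\to\R$ constructed for the global estimate \cite[Theorem 3.1]{MR3714512}: a smooth, symmetric, one-homogeneous, non-negative function of the principal curvatures which vanishes precisely on $\{\kappa_n\le c_mF\}$ and satisfies $(\kappa_n-c_mF)_+\le C\,G\le C'F$ on $\Gamma^n_0$. For $\varepsilon>0$, $\sigma\in(0,1)$, $p>10$ I would set $G_{\varepsilon,\sigma}:=(G-\varepsilon F)F^{\sigma-1}$ and $v:=(G_{\varepsilon,\sigma})_+^{p/2}$, exactly as in Section~\ref{sec:convexity estimate for convex speeds}. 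The evolution identity \eqref{eq:evlnGes} holds for this $G$, so what remains is the analogue of \eqref{eq:Q estimate convex case}: there is $\gamma=\gamma(n,f,\alpha,\varepsilon)>0$ such that, wherever $G_{\varepsilon,\sigma}>0$,
\[
F^{\sigma-1}\left(\dot G^{kl}\ddot F^{pq,rs}-\dot F^{kl}\ddot G^{pq,rs}\right)\nabla_kA_{pq}\nabla_lA_{rs}\le-\gamma\,G_{\varepsilon,\sigma}\,\frac{\vert\nabla A\vert^2}{F^2}\,.
\]
This is exactly the gradient estimate of \cite{MR3714512}, which uses the concavity of $f$ (so $\ddot F\le0$) together with the design of $G$. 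Discarding the small negatively-signed penultimate term of \eqref{eq:evlnGes} and absorbing the remaining gradient cross term via Young's inequality and \eqref{eq:uniform parabolic} (using $\vert\nabla F\vert_F\le C\vert\nabla A\vert$), one obtains \eqref{eq:key estimate for ev Ges} for a possibly smaller $\gamma=\gamma(n,f,\alpha,\varepsilon)>0$.

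From \eqref{eq:key estimate for ev Ges} onward I would simply replay Section~\ref{sec:convexity estimate for convex speeds}: derive \eqref{eq:evolve psi v} for $\psi^2v^2$ with $\psi=\zeta\circ X$, control the (now non-vanishing) divergence term $\int\mathcal{L}(\psi^2v^2)\,d\mu$ using $\vert\ddot F\vert\le CF^{-1}$ and Young's inequality, and apply Proposition~\ref{prop:Poincare} with $u=\psi v$ to reach \eqref{eq:d/dt L2}--\eqref{eq:L2 estimate}. The one new point is the admissibility of Proposition~\ref{prop:Poincare}, which requires the curvatures attained on $\{\psi v>0\}$ to be precompact in a cone that is disjoint from $\mathrm{Cyl}$ and contained in $\{w_1+\dots+w_n>0\}$. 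On $\{\psi v>0\}$ we have $G>\varepsilon F$; since $\vec\kappa\in\Gamma^n_0\Subset\Gamma^n$ and $G$ is one-homogeneous, this confines $\vec\kappa$ to a fixed subcone $\Gamma^n_\varepsilon\Subset\Gamma^n$ with $\Gamma^n_\varepsilon\cap\mathrm{Cyl}=\emptyset$. Indeed, a cylindrical point lying in $\overline\Gamma{}^n_0$ must be a $j$-cylindrical point with $j\le m$ (those with $j\ge m+1$ lie on $\partial\Gamma^n_{m+1}$, hence outside $\Gamma^n\supset\Gamma^n_0$), and at such a point $\kappa_n=\kappa$ and $F=\kappa\,f(0,\dots,0,1,\dots,1)$ with $j$ zeros, so $\kappa_n/F\le c_m$ by monotonicity of $f$, whence $G=0$ there; together with $\Gamma^n_\varepsilon\Subset\Gamma^n\subset\Gamma^n_{m+1}\subset\{w_1+\dots+w_n>0\}$ this makes Proposition~\ref{prop:Poincare} applicable on $\Gamma^n_\varepsilon$. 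The $L^2$ estimate \eqref{eq:L2 estimate} then follows whenever $p\ge L(n,f,\alpha,\varepsilon)$ and $\sigma p^{1/2}\le\ell(n,f,\alpha,\varepsilon)$; feeding it into \eqref{E:vk}, the Michael--Simon Sobolev inequality \eqref{eq:MichaelSimon} for $n\ge3$ (or \eqref{eq:classicalPoincare} and Proposition~\ref{T:Poincare} for $n=2$) and Stampacchia's Lemma~\ref{T:Stampacchia}, carried out exactly as in Section~\ref{sec:convexity estimate for convex speeds}, yields $G\le2\varepsilon F+c(n,f,\alpha,q,\varepsilon)\,\Theta\,(\Theta V/(1-\vartheta))^{2/q}$ in $B_{(\lambda-\delta)\vartheta}\times(0,\tfrac{1}{2n})$. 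Since $(\kappa_n-c_mF)_+\le C\,G$ on $\Gamma^n_0$, relabelling $\varepsilon$ gives \eqref{eq:cylindrical estimates concave}.

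The main obstacle is the displayed gradient estimate: showing that the concavity of $f$ interacts with the Hessian of the pinching function to produce a strictly negative multiple of $\vert\nabla A\vert^2/F^2$ wherever the pinching is bounded away from zero. This is the only flow-specific input and coincides with the computation behind the global estimate \cite[Theorem 3.1]{MR3714512}; everything else is the localization machinery already developed in Section~\ref{sec:convexity estimate for convex speeds}. The secondary, localization-specific difficulty is the one just described --- ensuring $\{\psi v>0\}$ maps into a cone avoiding $\mathrm{Cyl}$ --- which is precisely why $G$ must vanish on all of $\{\kappa_n\le c_mF\}$ and why the hypotheses $f|_{\partial\Gamma^n}=0$ (compact containment of $\Gamma^n_0$, hence uniform parabolicity) and $\Gamma^n\subset\Gamma^n_{m+1}$ (location of the cylindrical points) are both needed.
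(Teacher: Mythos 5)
Your proposal follows the same approach as the paper's own proof: take the pinching function $G$ designed for the corresponding global cylindrical estimate (a smooth, one-homogeneous, symmetric, non-negative function of the principal curvatures vanishing precisely on $\{\kappa_n\le c_mF\}$), invoke the associated gradient estimate giving \eqref{eq:key estimate for ev Ges}, and then run the localized Stampacchia scheme of Section~\ref{sec:convexity estimate for convex speeds} verbatim. The paper does exactly this, deducing the differential inequality for $G_{\varepsilon,\sigma}$ from \cite[Section~3]{MR4129354} and then declaring that the rest proceeds as in Section~\ref{sec:convexity estimate for convex speeds}; the only discrepancy in your write-up is that you attribute the pinching function and gradient estimate to \cite[Theorem~3.1]{MR3714512} rather than \cite{MR4129354}, a citation mix-up rather than a mathematical gap. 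Your explicit verification that the preserved cone $\Gamma^n_0=\{f\ge\alpha(z_1+\dots+z_n+\vert z\vert)\}$ gives uniform parabolicity via $f|_{\partial\Gamma^n}=0$, and your careful check that $\{G>\varepsilon F\}$ intersected with $\Gamma^n_0$ avoids $\mathrm{Cyl}$ (so that Proposition~\ref{prop:Poincare} applies) are details the paper leaves implicit in the phrase ``proceed exactly as in Section~\ref{sec:convexity estimate for convex speeds},'' and they are correct.
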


We refer to \eqref{eq:cylindrical estimates concave} as ``cylindrical estimates'' since (under the conditions imposed on $f$) a weakly locally convex hypersurface with $F>0$ satisfying
\[
\kappa_n\le c_mF
\]
must satisfy either
\[
\kappa_1+\dots+\kappa_m>0\;\;\text{or else}\;\;\kappa_1=\dots=\kappa_m=0\;\;\text{and}\;\;\kappa_{m+1}=\dots=\kappa_n>0\,.
\]
Note that the $m=0$ case gives an umbilic estimate.

\subsection{Proof of the cylindrical estimates for flows by concave speeds} 

Given $m\in\{0,\dots,n-1\}$, consider the flow by any concave admissible speed function $f:\Gamma^n\to\mathbb{R}$ with $f|_{\partial\Gamma^n}=0$ such that $\Gamma^n\subset\Gamma_{m+1}^n$. 
Along any solution to this flow, we define the pinching function $G:M^n\times[0,T)\to\mathbb{R}^{n+1}$ as in \cite[Section 3]{MR4129354}. This function is a smooth, one-homogeneous, symmetric function of the principal curvatures which is non-negative in $\Gamma_{m+1}^n$ and vanishes precisely when $\kappa_n\le c_m F$, where
\[
c_m^{-1}:=f(\underbrace{0,\dots,0}_{m\text{-times}},1,\dots,1)\,.
\]
It was shown in \cite[Section 3]{MR4129354} that, for any $\sigma\in(0,1)$ and $\varepsilon>0$, the function $G_{\varepsilon,\sigma}:=(G-\varepsilon F)F^{\sigma-1}$ satisfies
\[
(\partial_t-\mathcal{L})G_{\varepsilon,\sigma}\le G_{\varepsilon,\sigma}\left(\sigma\vert A\vert^2_F-\gamma\frac{\vert\nabla A\vert^2}{F^2}+\gamma^{-1}\frac{\vert\nabla G_{\varepsilon,\sigma}\vert^2}{G_{\varepsilon,\sigma}^2}\right)
\]
for some $\gamma=\gamma(n,f,\alpha,\varepsilon)>0$ wherever $G_{\varepsilon,\sigma}>0$. 
We may thus proceed exactly as in Section \ref{sec:convexity estimate for convex speeds} to establish Theorem \ref{thm:cylindrical estimates concave}.

\section{Convex hypersurfaces evolving by concave, inverse-concave speeds} \label{S:Fcic}

In the previous section, we established $m$-cylindrical estimates for concave speeds $f:\Gamma^n\to\R$ which satisfy the additional condition $\Gamma^n\subset\Gamma_{m+1}^n$ and $f|_{\partial\Gamma^n}=0$ (Theorem \ref{thm:cylindrical estimates concave}). The latter condition was only needed to ensure that the flow admits a preserved cone $\Gamma_0^n\Subset \Gamma^n$ of curvatures. In case $m=0$ (i.e. $\Gamma^n=\Gamma^n_+$), this is also guaranteed by inverse-concavity of $f$. Indeed, Andrews showed, using a sharpened tensor maximum principle, that positive lower bounds for $\kappa_1/H$ are preserved under such flows\footnote{Andrews' argument was presented for compact hypersurfaces satisfying a global positive lower bound for the ratio $\frac{\kappa_1}{H}$. But, of course, the argument still shows that positive lower bounds on the parabolic boundary of a region in which the flow is properly defined propagate to its interior.} \cite{MR2339467}. We have therefore established the following umbilic estimate.

\begin{theorem}[Umbilic estimate --- concave, inverse-concave speeds]\label{thm:umbilic inverse-concave}
Let $f:\Gamma_+^n\to\R$ be an admissible speed function which is both concave and inverse-concave. If a solution to the flow with speed $F=f(\vec\kappa)$ is properly defined in the spacetime cylinder $\overline B_{\lambda R} \times \left[0,\frac{1}{2n}R^2\right) \subset \mathbb{R}^{n+1}\times\mathbb{R}$ and satisfies
\begin{enumerate}
\item $\displaystyle\inf_{B_{\lambda R} \times \left\{ 0 \right\} \cup \partial B_{\lambda R} \times \left(0,\frac{1}{2n}R^2\right)}\frac{\kappa_1}{H}\geq\alpha>0$,
\item $\displaystyle\sup_{B_{\lambda R} \times \left\{ 0 \right\} \cup B_{\lambda R}\setminus B_{(\lambda-\delta)R} \times \left( 0, \frac{1}{2n} R^2 \right)} F \leq \Theta\, R^{-1}$, and
\item $\displaystyle\frac{R^2}{2n} \int_{B_{\lambda R}} F^q d\mu_0 + \frac{1}{\delta^2} \int_0^{\frac{R^2}{2n}}\!\!\!\int_{B_{\lambda R} \backslash B_{(\lambda-\delta)R}} F^q d\mu\, dt \leq \left( \Theta\, R^{-1}\right)^q \left( V \, R\right)^{n+2}$,
\end{enumerate}
then, given any $\varepsilon>0$ and $\vartheta \in \left(0,1\right)$, it satisfies
\begin{equation}\label{eq:cylindrical inverse-concave speeds}
\kappa_1-c_0F\geq -\varepsilon\,F - C_\varepsilon R^{-1}\;\;\text{in}\;\; B_{\left( \lambda- \delta\right) \vartheta R} \times \left( 0, \tfrac{1}{2n} R^2 \right)\,,
\end{equation}
where $C_\varepsilon= c\left(n,f,\alpha,q,\varepsilon \right) \Theta \left( \frac{\Theta\, V}{1-\vartheta} \right)^{\frac{2}{q}}$ and $c_0^{-1} := f(1,\dots,1)$.
\end{theorem}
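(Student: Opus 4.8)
The plan is to reduce the statement to the localized Huisken--Stampacchia iteration already carried out in Section~\ref{sec:convexity estimate for convex speeds}, in the same way the concave cylindrical estimates were reduced to it in Section~\ref{S:concave}: the ingredients that must be supplied are a cone of curvatures lying compactly inside $\Gamma^n_+$ which is preserved in the region where the flow is defined, together with the differential inequality \eqref{eq:key estimate for ev Ges} for a suitable pinching function. As always it suffices to treat the case $R=1$, which frees up $R$ for the cutoff parameters. First I would note that, by Andrews' sharpened tensor maximum principle \cite{MR2339467}, applied with the parabolic-boundary data furnished by hypothesis~(1) (cf.\ the footnote preceding the theorem and the local maximum-principle argument of \cite{MR4609824}), the pinching inequality $\kappa_1\ge\alpha H$ propagates inward and hence holds throughout $\overline B_\lambda\times[0,\tfrac{1}{2n})$. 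Thus the principal curvatures remain in $\Gamma^n_0:=\Gamma^n_+\cap\{z\in\R^n:z_{\min}\ge\alpha(z_1+\dots+z_n)\}$, and since every point of $\overline\Gamma{}^n_0\setminus\{0\}$ has all coordinates strictly positive (whence $\overline\Gamma{}^n_0\setminus\{0\}\subset\Gamma^n_+\subset\{z:z_1+\dots+z_n>0\}$ and $\overline\Gamma{}^n_0\cap\mathrm{Cyl}=\emptyset$), we have $\Gamma^n_0\Subset\Gamma^n_+$. In particular \eqref{eq:uniform parabolic} holds with $\Lambda=\Lambda(n,f,\alpha)$, and Proposition~\ref{prop:Poincare} applies with $\Gamma^n_0$ in place of $\Gamma^n$.

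Next I would take $G:M^n\times[0,T)\to\R$ to be the pinching function constructed, for the concave inverse-concave case, in \cite[Section~3]{MR4129354}: a smooth, symmetric, one-homogeneous function of the principal curvatures, non-negative on $\Gamma^n_+$, vanishing precisely when $\kappa_1=c_0F$ --- equivalently, by strict monotonicity of $f$ and $f(\kappa_1,\dots,\kappa_1)=n\kappa_1$, precisely at umbilic configurations. For $\varepsilon>0$ and $\sigma\in(0,1)$ put $G_{\varepsilon,\sigma}:=(G-\varepsilon F)F^{\sigma-1}$. The computation of \cite[Section~3]{MR4129354} --- in which concavity and inverse-concavity of $f$ together play the role that concavity and $f|_{\partial\Gamma^n}=0$ played in Section~\ref{S:concave}, namely forcing the term $F^{\sigma-1}(\dot G^{kl}\ddot F^{pq,rs}-\dot F^{kl}\ddot G^{pq,rs})\nabla_kA_{pq}\nabla_lA_{rs}$ to be bounded above by $-\gamma\,G_{\varepsilon,\sigma}|\nabla A|^2F^{-2}$ wherever $G_{\varepsilon,\sigma}>0$ --- then yields precisely \eqref{eq:key estimate for ev Ges} for some $\gamma=\gamma(n,f,\alpha,\varepsilon)>0$.

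With \eqref{eq:key estimate for ev Ges} and uniform parabolicity in hand, the rest is word-for-word the argument of Section~\ref{sec:convexity estimate for convex speeds}, which I would simply rerun: setting $v:=(G_{\varepsilon,\sigma})_+^{p/2}$ and $\psi=\zeta\circ X$, combine \eqref{eq:key estimate for ev Ges}, \eqref{E:evlnpsi} and the cutoff bounds (absorbing via Young's inequality and \eqref{eq:uniform parabolic}) to obtain the pointwise inequality \eqref{eq:evolve psi v}; differentiate $\int\psi^2v^2\,d\mu$, control the generally nonvanishing term $\int\mathcal{L}(\psi^2v^2)\,d\mu$ via the divergence theorem and $|\ddot F|\le C(n,f,\alpha)F^{-1}$ on $\Gamma^n_0$, and apply Proposition~\ref{prop:Poincare} with $u=\psi v$ to reach \eqref{eq:d/dt L2}; take $R=\lambda$, $r=\lambda-\delta$ and integrate twice in time for the $L^2$ estimate \eqref{eq:L2 estimate}, valid once $p\ge L(n,f,\alpha,\varepsilon)$ and $\sigma p^{1/2}\le\ell(n,f,\alpha,\varepsilon)$. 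Then run the Stampacchia iteration exactly as there --- estimating $|A|^2\le C(n,f,\alpha)F^2$, interpolating, using \eqref{eq:MichaelSimon} when $n\ge3$ (and Proposition~\ref{T:Poincare} when $n=2$; alternatively, the $n=2$ case follows directly from Theorem~\ref{thm:umbilic surfaces}, since concavity of $f$ together with $f(1,\dots,1)=n$ forces $F\le H$), and invoking Lemma~\ref{T:Stampacchia} --- fixing $\rho=\rho(n)>1+\tfrac{n}{2}$ and then $p=p(n,f,\alpha,\varepsilon,q)$ and $\sigma=\sigma(n,f,\alpha,\varepsilon,q)$ with $\sigma p=q$, so that hypotheses~(2) and (3) control $k_0=\Theta^\sigma$, $\Omega$ and $a(k_0,R_0)$ and give $G\le2\varepsilon F+c(n,f,\alpha,q,\varepsilon)\,\Theta\big(\tfrac{\Theta V}{1-\vartheta}\big)^{2/q}$ in $B_{\vartheta(\lambda-\delta)}$. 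Since on $\Gamma^n_0$ a bound $G\le\varepsilon'F$ forces $c_0F-\kappa_1\le C(n,f,\alpha)\,\varepsilon'F$ (the quantitative unwinding recorded in \cite{MR4129354}), relabelling $\varepsilon$ yields \eqref{eq:cylindrical inverse-concave speeds}. The only genuinely substantive point --- the \emph{main obstacle} --- is the second ingredient above: verifying that the construction of \cite{MR4129354} does produce a $G_{\varepsilon,\sigma}$ obeying \eqref{eq:key estimate for ev Ges}, i.e.\ that inverse-concavity (both through Andrews' cone-preservation argument and through the sign it forces on the second-derivative gradient term) can substitute for the hypothesis $f|_{\partial\Gamma^n}=0$ used in Section~\ref{S:concave}; once this is granted, everything else is a verbatim repetition with only the dependence of constants changing.
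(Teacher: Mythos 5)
Your approach is essentially identical to the paper's: the paper proves Theorem~\ref{thm:umbilic inverse-concave} by observing that, in the argument of Theorem~\ref{thm:cylindrical estimates concave}, the condition $f|_{\partial\Gamma^n}=0$ was used \emph{only} to secure a preserved cone $\Gamma_0^n\Subset\Gamma^n$, and that for $m=0$ (so $\Gamma^n=\Gamma^n_+$) Andrews' sharpened tensor maximum principle \cite{MR2339467} delivers the same conclusion from inverse-concavity. Everything after that --- the pinching function of \cite[Section~3]{MR4129354}, the inequality \eqref{eq:key estimate for ev Ges}, the iteration of Section~\ref{sec:convexity estimate for convex speeds}, and the $n=2$ workaround --- runs exactly as you say, and your observation that $n=2$ also follows directly from Theorem~\ref{thm:umbilic surfaces} is a valid shortcut.

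One small error worth correcting: the claim that $\overline\Gamma{}^n_0\cap\mathrm{Cyl}=\emptyset$ is false. Recall $\mathrm{Cyl}_0$ is the positive radial ray $\{\lambda(1,\dots,1):\lambda>0\}$, and for $z=\lambda\vec1$ one has $z_{\min}=\lambda\ge\alpha\cdot n\lambda$ whenever $\alpha\le 1/n$ (which always holds, since $\kappa_1/H\le 1/n$); thus $\mathrm{Cyl}_0\subset\Gamma^n_0$. Having all coordinates strictly positive excludes $\mathrm{Cyl}_m$ for $m\ge1$, but precisely \emph{defines} membership in $\mathrm{Cyl}_0$ (up to equality of coordinates). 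Consequently Proposition~\ref{prop:Poincare} cannot be applied with $\Gamma^n_0$ itself. The correct application --- which is also what the paper's Section~\ref{sec:convexity estimate for convex speeds} implicitly does, since the convex-case $\Gamma^n_0\supset\Gamma^n_+$ meets $\mathrm{Cyl}_0$ there as well --- is with a cone of the form $\{z\in\Gamma^n_0:G(z)>\tfrac{\varepsilon}{2}f(z)\}$: this is an open cone (by one-homogeneity of $G$ and $f$) whose closure intersected with $\Gamma^n_0\setminus\{0\}$ avoids $\mathrm{Cyl}_0$ (because $G$ vanishes precisely on the umbilic ray), and it contains the set $\{\vec\kappa(x,t):\psi v>0\}$ since $v>0$ forces $G>\varepsilon F$. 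The constant $\gamma$ in Proposition~\ref{prop:Poincare} then picks up a harmless dependence on $\varepsilon$ (through the cone), which the final $C_\varepsilon$ already allows. With this amendment your proof is correct and, apart from spelling out what the paper leaves implicit, coincides with the paper's.
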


\section{Hypersurfaces evolving by Lynch's speeds}

Flows by concave speeds do not in general tend to preserve (or improve) lower bounds for the second fundamental form. As such, obtaining a general convexity estimate for flows by concave speeds remained until recently an important open problem. In a major breakthrough, Lynch \cite{MR4484205} discovered a family of concave speeds which admit both cylindrical and convexity estimates. These speeds are a nonlinear interpolation of concave speeds with the mean curvature. More precisely, given an admissible speed function $f_1:\Gamma^n\to\mathbb{R}$ with $f_1|_{\partial\Gamma^n}=0$ which is strictly concave in non-radial directions, Lynch considered, for $\rho\in[0,1]$, the nonlinear interpolation
\begin{equation}\label{eq:Lynch's speeds}
f_\rho(z_1,\dots,z_n):=\left(\frac{\rho}{f_1(z_1,\dots,z_n)}+\frac{1-\rho}{f_0(z_1,\dots,z_n)}\right)^{-1}
\end{equation}
of $f_1$ with ($n$-times) the arithmetic mean,
\[
f_0(z_1,\dots,z_n):=z_1+\dots+z_n\,.
\]
Observe that the speeds $f_\rho$ are monotone in $\rho$ \cite{MR4484205}.
\begin{lemma}
For any $z\in\Gamma^n$, the function $\rho\mapsto f_\rho(z)$ is non-increasing. 
\end{lemma}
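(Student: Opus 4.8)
The plan is to reduce everything to the elementary fact that $\rho\mapsto 1/f_\rho(z)$ is \emph{affine}, together with the pointwise inequality $f_1\le f_0$ on $\Gamma^n$, which is the concave mirror image of the convexity estimate $f\ge f_0$ recorded in Section~\ref{S:Fconvex}. First I would fix $z\in\Gamma^n$ and abbreviate $a:=f_1(z)$ and $b:=f_0(z)=z_1+\dots+z_n$; both are positive (positivity of $f_0$ on $\Gamma^n$ is built into the definition \eqref{eq:Lynch's speeds}, and $f_1>0$ on the open cone $\Gamma^n$ since $f_1$ is strictly increasing and vanishes on $\partial\Gamma^n$). From \eqref{eq:Lynch's speeds},
\[
\frac{1}{f_\rho(z)}=\frac{1}{b}+\rho\left(\frac{1}{a}-\frac{1}{b}\right),
\]
so $\rho\mapsto 1/f_\rho(z)$ is affine with slope $\tfrac1a-\tfrac1b$, and therefore
\[
\frac{d}{d\rho}f_\rho(z)=-f_\rho(z)^2\left(\frac{1}{a}-\frac{1}{b}\right).
\]
It thus remains only to show $a\le b$, i.e. $f_1(z)\le f_0(z)$.

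This last inequality I would obtain from the supporting-hyperplane inequality for the concave function $f_1$. Decompose $z=z_0+\lambda\vec{1}$ with $\vec{1}=(1,\dots,1)$, $z_0\in\vec{1}^{\perp}$ and $\lambda=\tfrac1n(z_1+\dots+z_n)=\tfrac1n f_0(z)>0$; note that $\lambda\vec{1}\in\Gamma_+^n\subset\Gamma^n$. Concavity of $f_1$ then gives
\[
f_1(z)\le f_1(\lambda\vec{1})+Df_1|_{\lambda\vec{1}}\cdot z_0.
\]
By symmetry of $f_1$, all of its first partials agree at the radial point $\lambda\vec{1}$, so $Df_1|_{\lambda\vec{1}}$ is a multiple of $\vec{1}$ and hence $Df_1|_{\lambda\vec{1}}\cdot z_0=0$; by one-homogeneity and the normalization $f_1(\vec{1})=n$ (Conditions~\ref{conds:admissibility}), $f_1(\lambda\vec{1})=\lambda n=f_0(z)$. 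Hence $f_1(z)\le f_0(z)$, so $\tfrac1a-\tfrac1b\ge0$ and $\tfrac{d}{d\rho}f_\rho(z)\le0$, which is the claim.

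There is essentially no genuine obstacle here: the only input beyond a one-line computation is the inequality $f_1\le f_0$, which is immediate from concavity together with the normalization and is the exact mirror of the convexity computation already displayed in Section~\ref{S:Fconvex}. Equivalently, one may phrase the argument as: $1/f_\rho$ is the linear interpolation between $1/f_1$ and $1/f_0$, and $1/f_1\ge 1/f_0$ pointwise, so $1/f_\rho$ is non-decreasing in $\rho$, hence $f_\rho$ is non-increasing in $\rho$.
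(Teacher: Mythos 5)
Your proof is correct and takes essentially the same route as the paper: decompose $z=\lambda\vec 1+v$ with $v\in\vec 1^\perp$, use concavity plus symmetry and the normalization $f_1(\vec1)=n$ to conclude $f_1\le f_0$, then differentiate \eqref{eq:Lynch's speeds} in $\rho$. You merely spell out the supporting-hyperplane step (that $Df_1|_{\lambda\vec1}\cdot z_0=0$ by symmetry) a bit more explicitly than the paper, which states it in one line.
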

\begin{proof}
Since each $z\in\Gamma^n$ may be written as $z=\lambda\vec 1+v$ for some $v\in \vec 1{}^\perp=\{z\in\R^n:z_1+\dots+z_n=0\}$, where $\vec 1:=(1,\dots,1)$, the concavity and normalization of $f$ imply that
\[
f_1(z)=f_\rho(\lambda \vec 1+ v)\le f_1(\lambda\vec1)=n\lambda=f_0(z)\,.
\]
Thus,
\begin{equation}\label{eq:frho monotone in rho}
\frac{d}{d\rho}f_\rho=\frac{f_\rho^2}{f_0f_1}(f_1-f_0)\le 0\,.\qedhere
\end{equation}
\end{proof}

It is easy to see that each $f_\rho:\Gamma^n\to\mathbb{R}$ is itself a concave admissible speed function which vanishes on $\Gamma^n$; in particular, this means that solutions to the corresponding flow admit an $m$-cylindrical estimate if $\Gamma^n\subset\Gamma_{m+1}^n$, due to Theorem \ref{thm:cylindrical estimates concave}. We will need the following slightly different local cylindrical estimate, however.

\begin{theorem}[Cylindrical estimate --- Lynch's speeds]\label{thm:cylindrical estimates strictly concave}
Let $f_1:\Gamma^n\to\R$ be an admissible speed which is strictly concave in non-radial directions, with $\Gamma^n\subset\Gamma_{m+1}^n$, $2\le m\le n-1$, and $f_1|_{\partial\Gamma^n}=0$, and define $f_\rho$ as in \eqref{eq:Lynch's speeds}. If a solution to the flow with speed $F_\rho=f_\rho(\kappa_1,\dots,\kappa_n)$ is properly defined in the spacetime cylinder $\overline B_{\lambda R} \times \left[0,\frac{1}{2n}R^2\right)\subset \mathbb{R}^{n+1} \times \mathbb{R}$ and satisfies
\begin{enumerate}
\item $\displaystyle\inf_{B_{\lambda R} \times \left\{ 0 \right\} \cup \partial B_{\lambda R} \times \left( 0, \frac{1}{2n} R^2 \right)} \frac{F_\rho}{H} \geq \alpha > 0$,
\item $\displaystyle\sup_{B_{\lambda R} \times \left\{ 0 \right\} \cup B_{\lambda R}\setminus B_{(\lambda-\delta)R} \times \left( 0, \frac{1}{2n} R^2 \right)} F_\rho \leq \Theta\, R^{-1}$, and
\item $\displaystyle\frac{R^2}{2n}\int_{B_{\lambda R}}d\mu_0 + \frac{1}{\delta^2}\int_0^{\frac{R^2}{2n}}\!\!\!\int_{B_{\lambda R} \backslash B_{(\lambda-\delta)R}}d\mu\, dt \leq  \left(V R\right)^{n+2}$,
\end{enumerate}
then, given any $\varepsilon>0$ and $\vartheta \in \left(0,1\right)$, it satisfies
\begin{equation}\label{eq:cylindrical estimates strictly concave}
\tfrac{1}{n-m}H-c_mF_\rho\leq \varepsilon\,F_\rho+C_\varepsilon R^{-1}\;\;\text{in}\;\; B_{(\lambda- \delta)\vartheta R} \times \left( 0, \tfrac{1}{2n} R^2 \right)\,,
\end{equation}
where $c_m^{-1}:=f_\rho(\underbrace{0,\dots,0}_{m\text{-times}},1,\dots,1)$ and $C_\varepsilon=C_\varepsilon(n,f,\alpha,\Theta,V,\varepsilon)$.
\end{theorem}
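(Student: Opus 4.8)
The plan is to reduce everything, exactly as in all the earlier proofs, to the template of Section \ref{sec:convexity estimate for convex speeds}; it of course suffices to treat $R=1$. First I would record uniform parabolicity: since $H=f_0$ is monotone, one-homogeneous, convex and symmetric and each $f_\rho$ is concave, hypothesis (1) together with the maximum principle keeps $\vec\kappa$ in the cone $\Gamma^n_0:=\{z:f_\rho(z)\ge\alpha H(z)\}$ throughout the given spacetime cylinder (cf.\ Section \ref{S:concave} and \cite{MR4484205}); and from $F_\rho^{-1}=\rho F_1^{-1}+(1-\rho)H^{-1}\ge\rho F_1^{-1}$ one gets $F_1\ge\alpha\rho H$ on $\Gamma^n_0$, so, since $f_1$ is concave with $f_1|_{\partial\Gamma^n}=0$, the cone $\Gamma^n_0$ is compactly contained in $\Gamma^n$. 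Zero-homogeneity of $\dot f_\rho$ then gives $\Lambda^{-1}\delta^{ij}\le\dot F_\rho^{ij}\le\Lambda\delta^{ij}$ for some $\Lambda=\Lambda(n,f,\alpha)<\infty$.

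Next I would introduce the pinching function. Following Lynch \cite{MR4484205} (cf.\ \cite[Section 3]{MR3265960}, \cite[Section 3]{MR4129354}) one takes $G:M^n\times[0,T)\to\R$ to be a smooth, symmetric, one-homogeneous function of the principal curvatures which is non-negative on $\Gamma^n_0$ and vanishes precisely where $\tfrac1{n-m}H\le c_mF_\rho$, and sets $G_{\varepsilon,\sigma}:=(G-\varepsilon F_\rho)F_\rho^{\sigma-1}$. The essential input --- and the place where strict concavity of $f_1$ in non-radial directions and the special structure \eqref{eq:Lynch's speeds} of the interpolation enter (the second-order term of $f_\rho$ degenerating in the radial direction exactly as that of $f_0=H$ does) --- is the differential inequality
\begin{equation}\label{eq:lynch pinching ineq}
(\partial_t-\mathcal{L})G_{\varepsilon,\sigma}\le G_{\varepsilon,\sigma}\left(\sigma\vert A\vert^2_F-\gamma\frac{\vert\nabla A\vert^2}{F_\rho^2}+\gamma^{-1}\frac{\vert\nabla G_{\varepsilon,\sigma}\vert^2}{G_{\varepsilon,\sigma}^2}\right)\qquad\text{wherever }G_{\varepsilon,\sigma}>0,
\end{equation}
valid for some $\gamma=\gamma(n,f,\alpha,\varepsilon)>0$; this is the content of Lynch's construction, and I would invoke it directly.

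Granting \eqref{eq:lynch pinching ineq}, the argument becomes mechanical and runs verbatim as in Section \ref{sec:convexity estimate for convex speeds} from \eqref{eq:key estimate for ev Ges} onwards: with $\psi=\zeta\circ X$ and $v=(G_{\varepsilon,\sigma})_+^{p/2}$ one computes the evolution of $\int\psi^2v^2\,d\mu$, absorbs the non-divergence terms produced by $\ddot F_\rho$ (comparable to $F_\rho^{-1}$ by homogeneity) via Young's inequality, and applies the Poincar\'e-type inequality (Proposition \ref{prop:Poincare}) with $u=\psi v$. This last step is legitimate just as for the cylindrical estimates of Sections \ref{S:Fconvex}--\ref{S:concave}: where $v>0$ one has $\vec\kappa\in\Gamma^n_0\Subset\Gamma^n$ and $\tfrac1{n-m}H>(c_m+\varepsilon)F_\rho$, so the attained curvatures lie in a precompact subcone of $\Gamma^n$ whose closure avoids $\mathrm{Cyl}$ --- an $m'$-cylindrical ray with $m'>m$ lies outside $\Gamma^n_{m+1}\supset\Gamma^n_0$, while along an $m'$-cylindrical ray with $m'\le m$ one computes, using Schur-concavity of $f_\rho$, that $\tfrac1{n-m}H\le c_mF_\rho$, i.e.\ $G=0$. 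Since $m\ge2$ forces $n\ge3$, the Michael--Simon Sobolev inequality \eqref{eq:MichaelSimon} is available, so the resulting $L^2$ estimate feeds into Stampacchia's lemma (Lemma \ref{T:Stampacchia}) to give $G\le2\varepsilon F_\rho+c(n,f,\alpha,\varepsilon)\,\Omega^{2/q}$ on $B_{(\lambda-\delta)\vartheta}$, where $q:=\sigma p$ is fixed in terms of $n,f,\alpha,\varepsilon$ and $\Omega$ --- the analogue of the quantity of Section \ref{sec:convexity estimate for convex speeds}, built from $F_\rho^{\sigma p}$ --- is bounded in terms of $\Theta$ and $V$ using hypotheses (2) and (3). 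Since $G$ vanishes exactly when $\tfrac1{n-m}H\le c_mF_\rho$, relabelling $\varepsilon$ yields \eqref{eq:cylindrical estimates strictly concave} with $C_\varepsilon=C_\varepsilon(n,f,\alpha,\Theta,V,\varepsilon)$.

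The one genuinely new ingredient, and the main obstacle, is \eqref{eq:lynch pinching ineq} itself: its proof is Lynch's principal technical achievement and consists in showing that, for the carefully built $G$ (which also has to be checked smooth across points of principal curvature multiplicity), the concavity of $f_1$ in non-radial directions --- surviving in a usable form under the harmonic interpolation with $H$, precisely because $H$ contributes nothing to the second-order term --- produces a strictly negative multiple of $\vert\nabla A\vert^2/F_\rho^2$ in the evolution of the pinching quantity, up to terms that are either lower order (absorbed into $\sigma\vert A\vert^2_F$) or of gradient-of-$G_{\varepsilon,\sigma}$ type (absorbed into $\gamma^{-1}\vert\nabla G_{\varepsilon,\sigma}\vert^2/G_{\varepsilon,\sigma}^2$). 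Once \eqref{eq:lynch pinching ineq} is in hand, everything after it is a replay of Section \ref{sec:convexity estimate for convex speeds} with $F$ replaced by $F_\rho$ throughout.
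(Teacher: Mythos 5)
Your proposal follows essentially the same route as the paper: uniform parabolicity from the preserved cone $\{f_\rho\ge\alpha H\}\Subset\Gamma^n$, the key differential inequality for $G_{\varepsilon,\sigma}$ imported from Lynch (the paper cites \cite[Lemma 4.2]{MR4484205}), and then the Stampacchia template of Section \ref{sec:convexity estimate for convex speeds} verbatim. The only (cosmetic) difference is that the paper takes the pinching function to be literally $G=\tfrac{1}{n-m}H-c_mF_\rho$ --- automatically smooth and one-homogeneous --- rather than an abstractly constructed non-negative function vanishing on $\{\tfrac{1}{n-m}H\le c_mF_\rho\}$; your extra verification that the attained curvature cone avoids $\mathrm{Cyl}$ (so that Proposition \ref{prop:Poincare} applies) is correct and is left implicit in the paper.
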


Remarkably, Lynch was able to show that the speeds defined by \eqref{eq:Lynch's speeds} also admit a convexity estimate if $\rho$ is sufficiently small. We will establish the following localization of Lynch's convexity estimate.\footnote{We only need to consider the case $m\ge2$ since the cylindrical estimate \eqref{eq:cylindrical estimates concave} already implies a convexity estimate when $m=1$. This is because the homogeneity and monotonicity of $f$ imply that
\[
\kappa_n-c_1F=c_1\big(\kappa_nf(0,1,\dots,1)-f(\kappa_1,\dots,\kappa_n)\big)\ge c_1\big(f(0,\kappa_n,\dots,\kappa_n)-f(\kappa_1,\kappa_n,\dots,\kappa_n)\big)\,,
\]
and hence (by monotonicity of $f$) $\kappa_n-c_1F\le 0$ only if $\kappa_1\ge 0$. Homogeneity then guarantees that $-\kappa_1\le c(\kappa_n-c_1F)$ wherever $-\kappa_1\ge \varepsilon F$ for some $c=c(n,f,\alpha,\varepsilon)$.}

\begin{theorem}[Convexity estimate --- Lynch's speeds]\label{thm:convexity estimate concave}
Let $f_1:\Gamma^n\to\R$ be an admissible speed which is strictly concave in non-radial directions, with $\Gamma^n\subset\Gamma_{m+1}^n$, $2\le m\le n-1$, and $f_1|_{\partial\Gamma^n}=0$, and define $f_\rho$ as in \eqref{eq:Lynch's speeds}. If $\rho\le\rho_0(n,f_1)$ and a solution to the flow with speed $F_\rho=f_\rho(\kappa_1,\dots,\kappa_n)$ is properly defined in the spacetime cylinder $\overline B_{\lambda R} \times \left[ 0, \frac{1}{2n} R^2 \right) \subset \mathbb{R}^{n+1} \times \mathbb{R}$ and satisfies
\begin{enumerate}
\item $\displaystyle\inf_{B_{\lambda R} \times \left\{ 0 \right\} \cup \partial B_{\lambda R} \times \left( 0, \frac{1}{2n} R^2 \right)} \frac{F_\rho}{H} \geq \alpha > 0$,
\item $\displaystyle\sup_{B_{\lambda R} \times \left\{ 0 \right\} \cup B_{\lambda R}\setminus B_{(\lambda-\delta)R} \times \left( 0, \frac{1}{2n} R^2 \right)} F_\rho \leq \Theta\, R^{-1}$, and
\item $\displaystyle\frac{R^2}{2n}\int_{B_{\lambda R}}d\mu_0 + \frac{1}{\delta^2}\int_0^{\frac{R^2}{2n}}\!\!\!\int_{B_{\lambda R} \backslash B_{(\lambda-\delta)R}}d\mu\, dt \leq  \left(V R\right)^{n+2}$,
\end{enumerate}
then, given any $\varepsilon>0$ and $\vartheta \in \left(0,1\right)$, it satisfies
\begin{equation}\label{eq:convexity estimate concave}
\kappa_1\geq -\varepsilon\,F_\rho-C_\varepsilon R^{-1}\;\;\text{in}\;\; B_{(\lambda- \delta)\vartheta R} \times \left( 0, \tfrac{1}{2n} R^2 \right)\,,
\end{equation}
where $C_\varepsilon= %
C_\varepsilon(n,f_1,\rho,\alpha,\Theta,V,\varepsilon)$.
\end{theorem}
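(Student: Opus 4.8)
The plan is to run the localized Huisken--Stampacchia iteration of Section \ref{sec:convexity estimate for convex speeds} with Lynch's convexity pinching function \cite{MR4484205} in the role of $G$. As usual we reduce to the case $R=1$. The first step is to establish uniform parabolicity. Hypothesis (1) gives the pinching $F_\rho\ge\alpha H$ on the parabolic boundary of the region $X^{-1}(\overline B_\lambda)\times[0,\tfrac1{2n})$; since $f_\rho$ is concave and $H$ is a monotone, one-homogeneous, convex symmetric function, the maximum principle (applied to \eqref{eq:evlnGes} with $\sigma=0$ and $\varepsilon=\alpha^{-1}$, as in the footnote of Section \ref{S:concave}) propagates this pinching to the interior, so that $\vec\kappa\in\Gamma_0^n:=\{z\in\R^n:f_\rho(z)\ge\alpha H(z)\}$ throughout $X^{-1}(B_\lambda)\times[0,\tfrac1{2n})$. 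Because $\Gamma^n\Subset\{H>0\}$ (which follows from $\Gamma^n\subset\Gamma_{m+1}^n$) and $f_\rho|_{\partial\Gamma^n}=0$, this cone satisfies $\Gamma_0^n\Subset\Gamma^n$, so that \eqref{eq:uniform parabolic} holds with some $\Lambda=\Lambda(n,f_1,\rho,\alpha)<\infty$; note also that $\overline\Gamma^n\cap\mathrm{Cyl}=\emptyset$, so Proposition \ref{prop:Poincare} is available.

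Following \cite[Section 3]{MR4484205}, we next take $G$ to be Lynch's convexity pinching function for the speed $f_\rho$: a smooth, symmetric, one-homogeneous function of the principal curvatures which is non-negative on $\overline\Gamma_0^n$, vanishes precisely when $\kappa_1\ge0$, and satisfies $G\le c_0F_\rho$ for some $c_0=c_0(n,f_1,\rho,\alpha)$. The crucial input, which Lynch established for $\rho\le\rho_0(n,f_1)$ sufficiently small, is the favourable sign of the reaction--gradient term attached to $G$; equivalently, setting $G_{\varepsilon,\sigma}:=(G-\varepsilon F_\rho)F_\rho^{\sigma-1}$ for $\varepsilon>0$ and $\sigma\in(0,1)$, one has
\[
(\partial_t-\mathcal{L})G_{\varepsilon,\sigma}\le G_{\varepsilon,\sigma}\left(\sigma\vert A\vert^2_{F_\rho}-\gamma\frac{\vert\nabla A\vert^2}{F_\rho^2}+\gamma^{-1}\frac{\vert\nabla G_{\varepsilon,\sigma}\vert^2}{G_{\varepsilon,\sigma}^2}\right)
\]
wherever $G_{\varepsilon,\sigma}>0$, for some $\gamma=\gamma(n,f_1,\rho,\alpha,\varepsilon)>0$. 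In verifying this one follows Lynch's computation, which exploits that $f_\rho$ is a nonlinear interpolation of $f_1$ with the mean curvature; under our hypotheses one has at one's disposal the a priori pinching $\vec\kappa\in\Gamma_0^n$ and, where the argument calls for it, the cylindrical estimate of Theorem \ref{thm:cylindrical estimates strictly concave} (which applies verbatim under the present hypotheses). This is the analogue of \eqref{eq:key estimate for ev Ges}.

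From \eqref{eq:key estimate for ev Ges} onwards the argument is identical to Section \ref{sec:convexity estimate for convex speeds}: we establish the evolution inequality \eqref{eq:evolve psi v} for $\psi^2v^2$ with $v:=(G_{\varepsilon,\sigma})_+^{p/2}$, absorb the zeroth-order curvature term using the Poincar\'e-type inequality (Proposition \ref{prop:Poincare}), integrate in time to obtain the $L^2$-estimate \eqref{eq:L2 estimate}, and then, with $R=\lambda$, $r=\lambda-\delta$, apply the Michael--Simon Sobolev inequality when $n\ge3$ (or the Poincar\'e inequality \ref{T:Poincare} when $n=2$) together with Stampacchia's lemma (Lemma \ref{T:Stampacchia}) to upgrade this to an $L^\infty$-bound $G\le 2\varepsilon F_\rho+c(n,f_1,\rho,\alpha,\Theta,V,\varepsilon)$ on $B_{\vartheta(\lambda-\delta)}\times(0,\tfrac1{2n})$. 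The only bookkeeping difference from Section \ref{sec:convexity estimate for convex speeds} is that there is no free exponent $q$ in the statement: one instead fixes $q=\sigma p$ as dictated by the iteration and controls the quantity $\Omega$ by combining hypothesis (2) (which bounds $F_\rho$, hence $F_\rho^{\sigma p}$, on $B_\lambda\setminus B_{\lambda-\delta}$ and at $t=0$ by powers of $\Theta$) with the area bound in hypothesis (3). Finally, since $G$ is one-homogeneous and vanishes exactly on $\{\kappa_1\ge0\}$, the same homogeneity argument used at the end of Section \ref{sec:convexity estimate for convex speeds} converts this bound into the stated estimate $\kappa_1\ge-\varepsilon F_\rho-C_\varepsilon R^{-1}$ in $B_{(\lambda-\delta)\vartheta}\times(0,\tfrac1{2n})$.

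The main obstacle is the second step --- the pointwise differential inequality for $G_{\varepsilon,\sigma}$, i.e. the inequality $F_\rho^{\sigma-1}(\dot G^{kl}\ddot F_\rho^{pq,rs}-\dot F_\rho^{kl}\ddot G^{pq,rs})\nabla_kA_{pq}\nabla_lA_{rs}\le-\gamma G_{\varepsilon,\sigma}\vert\nabla A\vert^2/F_\rho^2$ wherever $G_{\varepsilon,\sigma}>0$. This is precisely the content of Lynch's breakthrough and is where the smallness of $\rho$ is essential. Because it is a genuinely pointwise statement, however, it transfers to the localized setting without new difficulty once the a priori pinching $\vec\kappa\in\Gamma_0^n$ (and, where Lynch's argument requires it, the cylindrical estimate of Theorem \ref{thm:cylindrical estimates strictly concave}) is in hand, after which the remainder of the proof is a verbatim repetition of Section \ref{sec:convexity estimate for convex speeds}.
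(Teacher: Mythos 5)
The proposal misidentifies the essential structure of Lynch's convexity-pinching quantity, and the claim that the Stampacchia iteration then runs ``verbatim'' as in Section~\ref{sec:convexity estimate for convex speeds} does not hold. You assert that one can take $G$ to be a smooth, symmetric, \emph{one-homogeneous} function vanishing exactly when $\kappa_1\ge 0$, and that $G_{\varepsilon,\sigma}:=(G-\varepsilon F_\rho)F_\rho^{\sigma-1}$ satisfies the scale-invariant inequality \eqref{eq:key estimate for ev Ges}. No such $G$ is available: Lynch's argument produces a differential inequality only for the genuinely \emph{inhomogeneous} quantity
\[
G_\varepsilon=\frac{-\kappa_1-\varepsilon F_\rho}{F_\rho-\mu H+K}\,,
\]
where the additive constant $K$ is dictated by the cylindrical estimate of Theorem~\ref{thm:cylindrical estimates strictly concave}. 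The cylindrical estimate provides $H\le (c_m+\varepsilon)F_1+C_\varepsilon$ with an \emph{additive} error, and it is precisely this error which forces the $K$ into the denominator of $G_\varepsilon$; one cannot absorb it into a homogeneous quantity. Moreover, Lynch's evolution inequality for $G_\varepsilon$ carries zeroth-order terms proportional to $K\vert A\vert G_\varepsilon$ and $(\rho^{-1}+KH^{-1})\vert\nabla G_\varepsilon\vert^2/G_\varepsilon$, and the good term is divided by $(F_\rho-\mu H+K)H$ rather than $F_\rho^2$. These are not ``lower-order bookkeeping''; they are the difference between the actual estimate and the one you wrote.

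Because $K$ only becomes available on a strictly interior region, the paper's proof introduces a nested localization with $\delta_1,\delta_2,\vartheta_1,\vartheta_2$ satisfying $(\lambda-\delta)\vartheta=((\lambda-\delta_1)\vartheta_1-\delta_2)\vartheta_2$, applies the cylindrical estimate on the intermediate region, and runs the iteration on the innermost one. The presence of $K$ also changes the iteration itself: the ratio $K/F_\rho$ is only controlled where $G_{\varepsilon,\sigma}\ge k$ for $k\ge k_0= C(K+1)$, so the truncation must start from this $k_0$ (not from $\Theta^\sigma$), the $L^2$-estimate acquires additional $C\sigma k_0^p$ terms (cf.~\eqref{eq:L2 estimate concave speeds}), and the passage from $L^2$ to $L^\infty$ requires a separate argument for $k>2k_0$ to control $G_{\varepsilon,\sigma}(G_{\varepsilon,\sigma}-k)_+^{p-1}$ and $v_{k_0}^2F_\rho^2$. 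None of this appears in your outline. The structural modifications are not optional --- without the constant $K$ the differential inequality you quote is false, and without the nested localization the $K$ is not available where it is needed --- so the proof cannot be reduced to a repetition of Section~\ref{sec:convexity estimate for convex speeds}.
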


\subsection{Proof of the cylindrical estimate for Lynch's flow}

Define a function $G:M^n\times I\to\R$ by $G:=\frac{1}{n-m}H-c_mF_\rho$ and set, for any $\varepsilon>0$ and $\sigma\in (0,1)$, $G_{\varepsilon,\sigma}:=(G-\varepsilon F_\rho)F_\rho^{\sigma-1}$. By \cite[Lemma 4.2]{MR4484205},
\[
(\partial_t-\mathcal{L})G_{\varepsilon,\sigma}\le G_{\varepsilon,\sigma}\left(\sigma\vert A\vert^2_{F_\rho}-\gamma\frac{\vert\nabla A\vert^2}{F_\rho^2}+\gamma^{-1}\frac{\vert\nabla G_{\varepsilon,\sigma}\vert^2}{G_{\varepsilon,\sigma}^2}\right)
\]
for some $\gamma=\gamma(n,f,\alpha,\varepsilon)>0$ wherever $G_{\varepsilon,\sigma}>0$. 
We may thus proceed exactly as in Section \ref{sec:convexity estimate for convex speeds} (from \eqref{eq:key estimate for ev Ges} onwards) to establish Theorem \ref{thm:cylindrical convex}.

\subsection{Proof of the convexity estimate for Lynch's flow}

Assume (without loss of generality) that $R=1$ (we shall reintroduce the variable $R$ in the definition of our cut-off functions later) and introduce constants $\delta_1,\delta_2<\delta$ and $\vartheta_1,\vartheta_2<1$ such that
\[
(\lambda-\delta)\vartheta=((\lambda-\delta_1)\vartheta_1-\delta_2)\vartheta_2\,.
\]
One way to do this, which will suffice for our purposes, is to take
\[
\vartheta_1=\vartheta_2=\vartheta^\sharp:=\sqrt{\vartheta}\;\;\text{and}\;\;\delta_1=\delta_2=\delta^\sharp:=\frac{\delta\vartheta^\sharp}{1+\vartheta^\sharp}\,.
\]
This guarantees that
\[
\frac{\delta}{\delta_i}=1+\vartheta^{-\frac{1}{2}}
\]
depends only on $\vartheta$.

Observe that any solution to the flow by speed $F_\rho$ satisfying the hypotheses of Theorem \ref{thm:convexity estimate concave} satisfies the hypotheses of Theorem \ref{thm:cylindrical estimates strictly concave} with $q=2$ (say), $\delta$ replaced by $\delta_1$, and $V$ replaced by $V_1:=(\frac{\delta}{\delta_1})^{\frac{2}{n+2}}V$, and hence enjoys the cylindrical estimate \eqref{eq:cylindrical estimates strictly concave} 
\[
\tfrac{1}{n-m}H\le (c_m+\varepsilon)F_\rho+C_\varepsilon
\]
in $B_{(\lambda-\delta_1)\theta_1}\times(0,\frac{1}{2n})$ with $c_m^{-1}:=f_\rho(\underbrace{0,\dots,0}_{m\text{-times}},1,\dots,1)$ $C_\varepsilon=C_\varepsilon(n,f_1,\rho,\alpha,\Theta,V_1,\vartheta_1,\varepsilon)=C_\varepsilon(n,f_1,\rho,\alpha,\Theta,V,\vartheta,\varepsilon)$. In fact, since
\[
\frac{f_0}{f_\rho}=\rho\frac{f_0}{f_1}+1-\rho\,,
\]
we may actually estimate, for any $\varepsilon>0$,
\[
H\le (c_m+\varepsilon)F_1+C_\varepsilon
\]
with now $c_m^{-1}:=f_1(\underbrace{0,\dots,0}_{m\text{-times}},1,\dots,1)$ and $C_\varepsilon$ of the same form.
Thus, if we set $\mu:=\frac{1}{2(1+10^{-10})(n-m)c_m}$, then, since $f_\rho\ge f_1$, we can find some $K=K(n,f_1,\rho,\alpha,\Theta,V,\vartheta)$ such that
\[
F_\rho\ge F_1 \ge 2\mu H-K\,.
\]

Following Lynch \cite[Section 5.1]{MR4484205}, define the pinching function
\[
G_\varepsilon:=\frac{-\kappa_1-\varepsilon F_\rho}{F_\rho-\mu H+K}\,.
\]
Given any $\Gamma_0\Subset\Gamma$, Lynch showed that 
\begin{align*}
(\partial_t-\mathcal{L})G_{\varepsilon}\le{}& C\left(K\vert A\vert G_\varepsilon+(\rho^{-1}+KH^{-1})\frac{\vert\nabla G_\varepsilon\vert^2}{G_\varepsilon}\right)-\rho C^{-1} G_\varepsilon\frac{\vert\nabla A\vert^2}{(F_\rho-\mu H+K)H}\\
{}&-(C^{-1}-C\rho)\sum_{p=1}^n\sum_{q=2}^n\frac{\vert\nabla_1A_{pq}\vert^2}{(F_\rho-\mu H+K)H}
\end{align*}
in the distributional sense wherever $G_\varepsilon>0$ and $\vec\kappa\in\Gamma_0$, where $C=C(n,f_1,\Gamma_0)$ \cite[Proposition 6.1]{MR4484205}. Observe that 
\[
\vec\kappa\in \Gamma_0:=\left\{z\in\Gamma:\frac{f_0(z)}{f_1(z)}\le\frac{1}{2\mu}+1\right\}
\]
wherever $F_\rho\ge K$. Thus, if we impose $\rho\le C^{-2}$, then\footnote{To carry out this step, it is crucial that the constant $\mu$ in the definition of $G_\varepsilon$ does not depend on $\rho$.}
\[
(\partial_t-\mathcal{L})G_{\varepsilon}\le C\left(K\vert A\vert G_\varepsilon+\rho^{-1}\frac{\vert\nabla G_\varepsilon\vert^2}{G_\varepsilon}\right)-\rho C^{-1} G_\varepsilon\frac{\vert\nabla A\vert^2}{H^2}
\]
wherever $G_\varepsilon>0$ and $F_\rho\ge K$, where $C=C(n,f_1)$. Consider now, for $\sigma\in(0,1)$, the function $G_{\varepsilon,\sigma}:=G_{\varepsilon}F^\sigma$. Using the identity
\[
\frac{\nabla G_{\varepsilon,\sigma}}{G_{\varepsilon,\sigma}}=\frac{\nabla G_{\varepsilon}}{G_\varepsilon}+\sigma\frac{\nabla F_\rho}{F_\rho}
\]
to estimate
\[
\frac{\vert\nabla G_{\varepsilon}\vert^2}{G_{\varepsilon}^2}\le 2\frac{\vert\nabla G_{\varepsilon,\sigma}\vert^2}{G_{\varepsilon,\sigma}^2}+2\sigma^2\frac{\vert\nabla F_\rho\vert^2}{F_\rho^2}\,,
\]
we find, for $\sigma\le\ell(n,f_1,\rho)$, that
\begin{align*}
\frac{(\partial_t-\mathcal{L})G_{\varepsilon,\sigma}}{G_{\varepsilon,\sigma}}
\le{}&C\left(K\vert A\vert+\rho^{-1}\frac{\vert\nabla G_\varepsilon\vert^2}{G_\varepsilon^2}\right)-\rho C^{-1}\frac{\vert\nabla A\vert^2}{H^2}\\
{}&+\sigma\vert A\vert_F^2+\sigma(1-\sigma)\frac{\vert\nabla F_\rho\vert_{F_\rho}^2}{F_\rho^2}-2\sigma\left<\frac{\nabla G_{\varepsilon,\sigma}}{G_{\varepsilon,\sigma}},\frac{\nabla F_\rho}{F_\rho}\right>_{F_\rho}\\
\le{}&C\left(\left(\frac{K}{F_\rho}+\sigma\right)\vert A\vert^2+\rho^{-1}\frac{\vert\nabla G_{\varepsilon,\sigma}\vert^2}{G_{\varepsilon,\sigma}^2}\right)-\rho C^{-1}\frac{\vert\nabla A\vert^2}{H^2}
\end{align*}
in the distributional sense wherever $G_{\varepsilon,\sigma}>0$ and $F_\rho\ge K$, where $C=C(n,f_1)$.

Now, Young's inequality implies that
\[
F_\rho^\sigma\le 1-\sigma+\sigma F_\rho\,.
\]
Thus, since
\[
G_\varepsilon\le \frac{-\kappa_1}{\mu H}\le C(n,f_1,\alpha)\,,
\]
we may estimate, wherever $G_{\varepsilon,\sigma}\ge k$,
\[
k\le G_{\varepsilon,\sigma}=G_\varepsilon F_\rho^\sigma\le C(1+\sigma F_\rho)\,.
\]
We conclude that
\[
\frac{K}{F_\rho}\le \frac{\sigma CK}{k-C}\le \sigma
\]
wherever $G_{\varepsilon,\sigma}\ge k$ if $k\ge k_0=C(K+1)$, where $C=C(n,f_1,\alpha)$.

Consider, for $k\ge k_0$ and $p\ge 10$,
\[
v_k\doteqdot (G_{\varepsilon,\sigma}-k)_+^{\frac{p}{2}}\,.
\]
Proceeding as in \eqref{E:vk} (cf. \cite{LynchThesis}), we find that
\begin{align}\label{eq:L2 to Linfty concave speeds}
\frac{d}{dt}\int \psi^2v_k^2\,d\mu+C^{-1}\int{}&\left(\frac{\vert\nabla v_k\vert^2}{v_k^2}+p\rho\frac{\vert\nabla A\vert^2}{F^2_\rho}+H^2\right)\psi^2v_k^2\,d\mu\nonumber\\
\le{}&\frac{C}{(R-r)^2}\int_{B_R\setminus B_r}v_k^2\,d\mu+C\sigma p\int G_{\varepsilon,\sigma}(G_{\varepsilon,\sigma}-k)_+^{p-1}\vert A\vert^2\psi^2\,d\mu\,,
\end{align}
where $C=C(n,f_1)$, so long as $p\ge L=L(n,f_1,\rho)$, $\sigma\le \ell=\ell(n,f_1,\rho)$ and $k\ge k_0=k_0(n,f_1,\rho,\alpha,\Theta,V,\vartheta)$. Estimating
\begin{align*}
G_{\varepsilon,\sigma}(G_{\varepsilon,\sigma}-k)_+^{p-1}={}&(G_{\varepsilon,\sigma}-k)_+^p+k(G_{\varepsilon,\sigma}-k)_+^{p-1}\\
\le{}& v_k^2+\tfrac{1}{p}k^p+(1-\tfrac{1}{p})v_k^2
\end{align*}
and $\vert A\vert^2\le CH^2$, $C=C(n,f_1)$ (wherever $v_k>0$), we obtain
\begin{align*}
\frac{d}{dt}\int \psi^2v_k^2\,d\mu+C^{-1}\int{}&\left(\frac{\vert\nabla v_k\vert^2}{v_k^2}+p\rho\frac{\vert\nabla A\vert^2}{F^2_\rho}+H^2\right)\psi^2v_k^2\,d\mu\\
\le{}&\frac{C}{(R-r)^2}\int_{B_R\setminus B_r}v_k^2\,d\mu+2\sigma pC\int \psi^2v_k^2\vert A\vert^2\,d\mu+C\sigma k^p\int H^2\psi^2\,d\mu\,.
\end{align*}
Absorbing the penultimate term using the Poincar\'e-type inequality (Proposition \ref{prop:Poincare}) as before now yields
\[
\frac{d}{dt}\int \psi^2v_k^2\,d\mu\le\frac{C}{(R-r)^2}\int_{B_R\setminus B_r}v_k^2\,d\mu+C\sigma k^p\int H^2\psi^2\,d\mu
\]
and hence, by Young's inequality,
\begin{align*}
\frac{d}{dt}\int(v_k^2+C\sigma k^p)\psi^2\,d\mu
\le{}&\frac{C(1+\sigma k^p)}{(R-r)^2}\int_{B_R\setminus B_r} v_k^2\psi^2\,d\mu
\end{align*}
for some (much larger) $C=C(n,f_1)$, so long as $p\ge L=L(n,f_1,\rho,\varepsilon)$, $\sigma p^{-\frac{1}{2}}\ge \ell=\ell(n,f_1,\rho,\varepsilon)$ and $k\ge k_0=k_0(n,f_1,\rho,\alpha,\Theta,V,\vartheta)$. Integrating with respect to time yields
\[
\sup_{t\in[0,\frac{1}{2n})}\int(v_k^2+C\sigma k^p)\psi^2\,d\mu\le \int(v_k^2+C\sigma k^p)\psi^2\,d\mu_0+\frac{C(1+\sigma k^p)}{(R-r)^2}\int_0^{\frac{1}{2n}}\!\!\!\int_{B_R\setminus B_r}v_k^2\,d\mu\,dt
\]
Integrating again then yields, upon setting $R=\lambda_2:=(\lambda-\delta_1)\theta_1$ and $r=\lambda_2-\delta_2$, the following replacement for Lemma \ref{lem:L2 convex speeds}.
\begin{lemma}[$L^2$-estimate]
If $p\ge L=L(n,f_1,\rho,\alpha,\varepsilon)$, $\sigma p^{\frac{1}{2}}\le \ell=\ell(n,f_1,\rho,\alpha,\varepsilon)$ and $k\ge k_0=k_0(n,f_1,\rho,\alpha,\Theta,V,\vartheta)$, then
\begin{align}\label{eq:L2 estimate concave speeds}
\int\!\!\!\int_{B_{\lambda_2-\delta_2}}v_{k_0}^2\,d\mu\le{}& \frac{1}{2n}\int_{B_{\lambda_2}}(v_{k_0}^2+C\sigma k_0^p)\,d\mu_0+\frac{C(1+\sigma k_0^p)}{\delta_2^2}\int\!\!\!\int_{B_{\lambda_2}\setminus B_{\lambda_2-\delta_2}}v_{k_0}^2\,d\mu\,dt\,,
\end{align}
where $C=C(n,f_1)$.
\end{lemma}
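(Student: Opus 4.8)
The plan is to run the argument of Section~\ref{sec:convexity estimate for convex speeds} essentially verbatim, the one genuinely new feature being the zeroth-order ``pollution'' term injected into the evolution equation by the inhomogeneity of Lynch's pinching function. Everything takes place on $\{G_{\varepsilon,\sigma}>0\}$, where we already have the differential inequality for $G_{\varepsilon,\sigma}$ displayed just above (in the distributional sense, and valid once $F_\rho\ge K$, in which case $K/F_\rho\le\sigma$), together with the pointwise bounds $G_\varepsilon\le C(n,f_1,\alpha)$ and $\vert A\vert^2\le CH^2\le CF_\rho^2$ wherever $v_k>0$ (there $\vec\kappa$ lies in the fixed subcone $\Gamma_0\Subset\Gamma$). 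Choosing $k_0=C(n,f_1,\alpha)(K+1)$ ensures $K/F_\rho\le\sigma$ on $\{G_{\varepsilon,\sigma}\ge k_0\}$, and since $K=K(n,f_1,\rho,\alpha,\Theta,V,\vartheta)$ this pins down $k_0=k_0(n,f_1,\rho,\alpha,\Theta,V,\vartheta)$ as required.

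First I would derive, for $k\ge k_0$ and $p\ge 10$, the differential inequality \eqref{eq:L2 to Linfty concave speeds} for $\int\psi^2v_k^2\,d\mu$ exactly as \eqref{E:vk} is obtained in Section~\ref{sec:convexity estimate for convex speeds}: expand $(\partial_t-\mathcal{L})(\psi^2v_k^2)$ and use \eqref{E:evlnpsi}; absorb the $\nabla\psi$ terms into $\frac{C}{(R-r)^2}\int_{B_R\setminus B_r}v_k^2\,d\mu$ via the cut-off bounds; control the (non-vanishing) term $\int\mathcal{L}(\psi^2v_k^2)\,d\mu$ through the divergence theorem and $\vert\ddot F_\rho\vert\le CF_\rho^{-1}$; and split the zeroth-order contribution by means of the elementary inequality $G_{\varepsilon,\sigma}(G_{\varepsilon,\sigma}-k)_+^{p-1}\le 2v_k^2+\tfrac{1}{p}k^p$ and $\vert A\vert^2\le CH^2$ into a $\sigma p\,v_k^2\vert A\vert^2$ piece and a $\sigma k^pH^2$ piece. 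The $\sigma p\,v_k^2\vert A\vert^2$ piece is then removed by the Poincar\'e-type inequality (Proposition~\ref{prop:Poincare}) applied with $u=\psi v_k$, whose cost is swallowed by the good $\vert\nabla v_k\vert^2$ and $p\rho\vert\nabla A\vert^2F_\rho^{-2}$ terms on the left once $p\ge L(n,f_1,\rho)$ and $\sigma p^{1/2}\le\ell(n,f_1,\rho)$; as in Lynch's argument it is essential here that the coefficient of $\vert\nabla A\vert^2$ be of order $p\rho$.

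What survives of the right-hand side is $\frac{C}{(R-r)^2}\int_{B_R\setminus B_r}v_k^2\,d\mu+C\sigma k^p\int H^2\psi^2\,d\mu$, and the second term, which does not involve $v_k$, is the new obstruction. I would absorb it by adjoining a large multiple $\tilde C\sigma k^p\int\psi^2\,d\mu$ to the left-hand side: computing $\frac{d}{dt}\int\psi^2\,d\mu$ directly from $\partial_t\psi=-F_\rho\langle D\zeta,\nu\rangle$ and $\frac{d}{dt}d\mu=-F_\rho H\,d\mu$, the negative term $-\int F_\rho H\psi^2\,d\mu\le-\alpha\int H^2\psi^2\,d\mu$ (using the preserved pinching $F_\rho\ge\alpha H$) dominates, after a Young absorption on $\partial_t\psi^2$, every $H^2\psi^2$ contribution, leaving only a term supported on the collar $B_R\setminus B_r$. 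Choosing $\tilde C$ large relative to the fixed constant multiplying $\int H^2\psi^2\,d\mu$ then yields
\[
\frac{d}{dt}\int(v_k^2+\tilde C\sigma k^p)\psi^2\,d\mu\le\frac{C(1+\sigma k^p)}{(R-r)^2}\int_{B_R\setminus B_r}v_k^2\psi^2\,d\mu
\]
for $p\ge L(n,f_1,\rho,\alpha,\varepsilon)$, $\sigma p^{1/2}\le\ell(n,f_1,\rho,\alpha,\varepsilon)$ and $k\ge k_0$, the residual collar geometry entering only through the third hypothesis once one integrates. Indeed, integrating once in time gives the corresponding $\sup_t$ estimate, and integrating once more over $t\in[0,\tfrac{1}{2n})$ --- using $\psi\equiv1$ on $B_{\lambda_2-\delta_2}$, $\psi\le1$, $R=\lambda_2$ and $r=\lambda_2-\delta_2$ --- delivers \eqref{eq:L2 estimate concave speeds} with $k=k_0$.

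I expect the principal difficulty to be constant bookkeeping rather than anything analytically deep. One must verify that $k_0$ depends only on $(n,f_1,\rho,\alpha,\Theta,V,\vartheta)$ --- which rests on the two facts recalled above, that $K/F_\rho\le\sigma$ on $\{G_{\varepsilon,\sigma}\ge k_0\}$ and that $G_\varepsilon\le C(n,f_1,\alpha)$ --- and that the multiple $\tilde C$ adjoined to the left-hand side to kill the $H^2$-pollution is chosen after, hence independently of, the constant it must beat. Beyond Lynch's evolution inequality \cite[Proposition 6.1]{MR4484205} and the Poincar\'e-type inequality of Proposition~\ref{prop:Poincare}, no further input is needed.
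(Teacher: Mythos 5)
Your proposal matches the paper's proof essentially step for step: the same choice of $k_0=C(n,f_1,\alpha)(K+1)$ so that $K/F_\rho\le\sigma$ on $\{G_{\varepsilon,\sigma}\ge k_0\}$, the same splitting $G_{\varepsilon,\sigma}(G_{\varepsilon,\sigma}-k)_+^{p-1}\le 2v_k^2+\tfrac{1}{p}k^p$ with $\vert A\vert^2\le CH^2$, absorption of the $\sigma p\,v_k^2\vert A\vert^2$ piece via Proposition \ref{prop:Poincare} against the $\vert\nabla v_k\vert^2$ and $p\rho\vert\nabla A\vert^2F_\rho^{-2}$ terms, and the same device of adjoining $C\sigma k^p\int\psi^2\,d\mu$ to exploit the negative $-\int F_\rho H\psi^2\,d\mu\le-\alpha\int H^2\psi^2\,d\mu$ term before integrating twice in time with $R=\lambda_2$, $r=\lambda_2-\delta_2$. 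Your spelled-out absorption of the $H^2$-pollution term is exactly what the paper's terse ``by Young's inequality'' step intends, so no further comment is needed.
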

Note that the right hand side of \eqref{eq:L2 estimate concave speeds} may be estimated in terms of the data $n$, $f_1$, $\rho$, $\alpha$, $\Theta$, $V$, $\vartheta$, $\sigma$ and $p$.

\subsubsection{From $L^2$ to $L^\infty$}

If $k>2k_0$, then we can estimate $G_{\varepsilon,\sigma}-k_0\ge k_0$ 
and hence
\[
G_{\varepsilon,\sigma}(G_{\varepsilon,\sigma}-k)_+^{p-1}=(G_{\varepsilon,\sigma}-k_0)(G_{\varepsilon,\sigma}-k)_+^p+k_0(G_{\varepsilon,\sigma}-k)_+^p\le 2v_{k_0}^2
\]
wherever $v_k^2>0$. Since $k\le CF_\rho^\sigma$, $C=C(n,f_1,\alpha)$, wherever $v_k>0$ and, without loss of generality, $2k_0\ge C$, we also have
\[
v_{k_0}^2F_\rho^2=\left(G_\varepsilon F_\rho^{\sigma+\frac{2}{p}}-k_0F_\rho^{\frac{2}{p}}\right)_+^p\le \left(G_{\varepsilon,\sigma'}-k_0\left(\frac{k}{C}\right)^{\frac{2}{\sigma p}}\right)_+^p\le \left(G_{\varepsilon,\sigma'}-k_0\right)_+^p\,,
\]
where $\sigma':=\sigma+\frac{2}{p}$. Recalling \eqref{eq:L2 to Linfty concave speeds}, we therefore have, taking $k_0$ larger as necessary, the following substitute for \eqref{E:vk}:
\begin{align*}
\frac{d}{dt}\int \psi^2v_k^2\,d\mu+C^{-1}\int{}&\left(\vert\nabla v_k\vert^2+v_k^2H^2\right)\psi^2\,d\mu\nonumber\\
\le{}&\frac{C}{\delta_2^2}\int_{B_{\lambda_2}\setminus B_{\lambda_2-\delta_2}}v_k^2\,d\mu+C\sigma p\int_{B_{\lambda_2}}\left(G_{\varepsilon,\sigma'}-k_0\right)_+^p\,d\mu
\end{align*}
whenever $k>k_0=k_0(n,f_1,\rho,\alpha,\Theta,V,\vartheta)$. 

We may now employ the H\"older and interpolation inequalities, the Michael--Simon Sobolev inequality, the $L^2$-estimate \eqref{eq:L2 estimate concave speeds}, and Stampacchia's lemma in the same manner as in Section \ref{sec:convexity estimate for convex speeds} (with $G_{\varepsilon,\sigma'}-k_0$ playing the role of $G_{\varepsilon,\sigma'}$) to establish the $L^\infty$ estimate.

\section{Extending the estimates to Riemannian ambient spaces} \label{S:Riem}

As has been observed by Huisken \cite{MR837523} and others \cite{MR3714512,MR892052,LangfordNguyen,MR4484205,Ng15}, the key estimates of the Huisken--Stampacchia iteration scheme may also be carried out for flows in Riemannian ambient spaces. The basic observation is that the Gauss--Codazzi equations provide the estimates
\begin{align*}
\left\vert \nabla_tA-\dot F^{ij}\nabla_i\nabla_jA-\dot F^{ij}A^2_{ij}A-\ddot F^{pq,rs}\nabla A_{pq}\nabla A_{rs}\right\vert\le C\left(\vert A\vert^2+1\right)\;\;\text{and}\;\;\big\vert \widehat{\nabla A}\big\vert\le C\,,
\end{align*}
where $\widehat{\nabla A}$ is the skew part of $\nabla A$ and the constants depend only on bounds for the ambient curvature and its derivative. These estimates ensure that, so long as the ambient curvature and its covariant derivative are bounded, the ambient geometry only contributes lower order terms, which are easily absorbed. We note that these considerations also apply, with minor modification, to flows by functions of \emph{shifted} principal curvatures (as in \cite{MR1267897,MR3714512,MR4484205}). 

Note, however, that the scheme also requires the preservation of the intermediate convexity condition (assumed to hold on the parabolic boundary). This need \emph{not} hold in general, as ambient terms in the evolution equation for the second fundamental form typically preclude the application of the tensor maximum principle. One important setting in which a suitable version of the intermediate convexity condition \emph{is} preserved concerns the flow by suitable concave functions of suitably shifted principal curvatures \cite{MR1267897,MR3714512,LynchThesis,MR4484205}; see, in particular, \cite[Theorem 7.1]{MR4484205}. (Note that uniform ellipticity may still be established as a consequence of the preserved curvature condition, though in a less direct way than in the Euclidean setting; see \cite[Section 4]{MR1267897} or \cite[Theorem 7.1]{MR4484205}). 

Finally, in order to establish \emph{localized} estimates, one also requires the introduction of suitable cut-off functions. This poses no difficulty at sufficiently small scales (relative to bounds for the ambient geometry). At large scales, suitable cut-off functions may be constructed under reasonable growth conditions on the ambient metric (e.g. asymptotic flatness or co-compactness). 

We refrain from attempting a general statement here, as such a statement would be too cumbersome to be of much utility, and instead defer such statements to future work (where specific applications will be considered).

\bibliographystyle{plain}
\bibliography{bibliography}

\end{document}